\definecolor{NTNUblue}{RGB}{0,80,158}
\definecolor{NTNUbluesupport}{RGB}{62,98,138}
\definecolor{NTNUorange}{RGB}{239,129,20}
\newcommand{\into}{\hookrightarrow}
\newcommand{\xto}{\xrightarrow}
\newcommand{\F}{\mathbb{F}}
\newcommand{\Q}{\mathbb{Q}}
\newcommand{\Z}{\mathbb{Z}}
\newcommand{\BBB}{\mathbf{B}}
\newcommand{\DDD}{\mathbf{D}}
\newcommand{\DDDw}{\widetilde{\DDD}}
\newcommand{\DDDwr}{\DDDw^{\re}}
\newcommand{\DDDwl}{\DDDw^{\li}}
\newcommand{\SSS}{\mathbf{S}}
\newcommand{\TTT}{\mathbf{T}}
\newcommand{\li}{\mathbf{l}}
\newcommand{\re}{\mathbf{r}}
\newcommand{\Ah}{\mathcal{A}}
\newcommand{\Bh}{\mathcal{B}}
\newcommand{\Ch}{\mathcal{C}}
\newcommand{\Hh}{\mathcal{H}}
\newcommand{\Vh}{\mathcal{V}}
\newcommand{\Zh}{\mathcal{Z}}
\newcommand{\Hom}{\mathrm{Hom}}
\newcommand{\uHom}{\underline{\Hom}}
\newcommand{\oddeven}{\mathrm{oe}}
\newcommand{\op}{\mathrm{op}}
\newcommand{\HH}{\mathrm{HH}}
\newcommand{\id}{\mathrm{id}}
\newcommand{\bbb}{\bullet}
\newcommand{\ba}{\bar{a}}
\newcommand{\Cb}{\Ch^\bbb}
\newcommand{\Hb}{H^\bbb}
\newcommand{\Hba}{\Hb(\Ah)}
\newcommand{\Hbb}{\Hb(\Bh)}
\newcommand{\Kbb}{K^{\bbb}_{\bbb}}
\newcommand{\dee}{\partial}
\newcommand{\dd}{\delta}
\newcommand{\ddA}{\dd_{\Ah}}
\newcommand{\ddB}{\dd_{\Bh}}
\newcommand{\fA}{f^{\Ah}}
\newcommand{\fB}{f^{\Bh}}
\newcommand{\One}{\mathbf{1}}
\newcommand{\ma}{m^{\Ah}} 
\newcommand{\mb}{m^{\Bh}}
\newcommand{\mh}{m^H}
\newcommand{\mmm}{m} 
\newcommand{\pam}{\sigma} 
\newcommand{\ga}{\gamma_{\Ah}}
\newcommand{\gag}{\gamma_G}
\newcommand{\wtheta}{\widetilde{\vartheta}}
\newcommand{\tm}{\widetilde{m}}
\newcommand{\pr}{\mathrm{pr}} 
\newcommand{\ip}{(\iota p)} 
\newcommand{\ot}{\otimes} 
\newcommand{\nn}{\mu} 
\newcommand{\mmp}{m}
\newcommand{\bU}{\overline{U}}
\newcommand{\rr}{\rho} 
\newcommand{\brr}{\overline{\rho}} 
\newcommand{\wrr}{\widetilde{\rho}}
\newcommand{\car}{\theta} 
\newcommand{\vspan} {\mathrm{span}}
\theoremstyle{plain}
\newtheorem{theorem}{Theorem}[section]
\newtheorem{question}[theorem]{Question}
\newtheorem{prop}[theorem]{Proposition}
\newtheorem{proposition}[theorem]{Proposition}
\newtheorem{lemma}[theorem]{Lemma}
\newtheorem{corollary}[theorem]{Corollary}
\theoremstyle{definition}
\newtheorem{definition}[theorem]{Definition}
\newtheorem{defn}[theorem]{Definition}
\newtheorem{example}[theorem]{Example}
\newtheorem{notn}[theorem]{Notation}
\theoremstyle{remark}
\newtheorem{rem}[theorem]{Remark}
\newtheorem{remark}[theorem]{Remark}
\begin{document}

\title{$A_3$-formality for Demushkin groups at odd primes}

\author{Ambrus P\'al}
\address{Mathematical Institute, E\"{o}tv\"{o}s Lor\'{a}nd University, H-1117 Budapest, Hungary} 
\email{ambrus.pal@ttk.elte.hu}

\author{Gereon Quick}
\address{Department of Mathematical Sciences, NTNU, NO-7491 Trondheim, Norway}
\email{gereon.quick@ntnu.no}
\thanks{Both authors were partially supported by RCN Project No.\,313472 {\it Equations in Motivic Homotopy}, 
and the project \emph{Pure Mathematics in Norway} funded by the Trond Mohn Foundation.}


\begin{abstract} 
We study a weak form of formality for differential graded algebras, called $A_3$-formality, 
for the cohomology of pro-$p$ Demushkin groups at odd primes $p$. 
We show that the differential graded $\F_p$-algebras of continuous cochains of Demushkin groups 
with $q$-invariant not equal $3$  
are $A_3$-formal, 
whereas Demushkin groups with $q$-invariant $3$ are not $A_3$-formal. 
We prove these results by an explicit computation of the Benson--Krause--Schwede canonical class 
in Hochschild cohomology. 
\end{abstract}
\subjclass{20J06, 12G05, 16E40, 18N40, 20E18, 55S30.}

\maketitle

\section{Introduction}

Let $F$ be a field and let $G_F$ denote its absolute Galois group. 
For a prime number $p$, 
let $\Cb(G_F,\F_p)$ denote the differential graded algebra of inhomogeneous continuous cochains 
of $G_F$ with coefficients in the constant discrete $G_F$-module $\F_p$. 
In \cite{HW}, Hopkins and Wickelgren showed that all triple Massey products of local and global fields at the prime $2$ 
vanish whenever they are defined. 
Since triple Massey products are the first obstruction to formality, 
Hopkins--Wickelgren therefore asked in \cite[Question 1.4]{HW} 
whether $\Cb(G_F,\F_2)$ is formal, 
i.e., whether there is a zigzag of quasi-isomorphisms of differential graded algebras 
between $\Cb(G_F,\F_2)$ and its cohomology $\Hb(G_F,\F_2)$. 
%
%
However, Positselski showed in \cite[Section 9.11]{PoMoscow} and \cite[\S 6]{Po} 
that $\Cb(G_F,\F_2)$ is not formal in general for  local fields. 
%
Then Harpaz--Wittenberg in \cite[Example A.15]{GMT} 
and more recently Merkurjev--Scavia in \cite[Theorem 1.6]{MS1} and \cite[Theorem 1.3]{MS2023} 
provided examples which 
show that the second obstruction to formality is not trivial in general, 
i.e., not all fourfold Massey products are defined when the neighbouring cup-products vanish. 

The question whether a differential graded algebra (dga) is formal as a dga 
is equivalent to whether it is formal as an $A_{\infty}$-algebra. 
%
%
We may then ask the weaker question whether a dga $\Ah$ is formal as an $A_3$-algebra, 
i.e., whether there is a there is a quasi-isomorphism of $A_3$-algebras 
between $\Ah$ and its cohomology algebra $\Hba$  
(see Section \ref{sec:A3-algebras} for a definition of $A_3$-algebras). 
The purpose of this paper is to study $A_3$-formality for the dga $\Cb(G,\F_p)$ 
of pro-$p$ Demushkin groups at odd primes. 

\begin{defn}\label{def:Demushkin}
Let $p$ be a prime number and let $G$ be a pro-$p$-group. 
Then $G$ is called a \emph{Demushkin group} if 
\begin{enumerate}
\item[(1)] $\dim_{\F_p}H^1(G,\F_p) < \infty$, 

\item[(2)] $\dim_{\F_p}H^2(G,\F_p) = 1$, 

\item[(3)] the cup product $H^1(G,\F_p) \times H^1(G,\F_p) \to H^2(G,\F_p)$ is a non-degenerate bilinear form. 
\end{enumerate}
\end{defn}

The only finite Demushkin group is $\Z/2$ and $\Cb(\Z/2,\F_2)$ is known to be intrinsically formal. 
Every Demushkin group is finitely presented as it can be topologically presented with $\dim_{\F_p} H^1(G, \F_p)$ 
number of generators and just one relation. 
In fact, by \cite{Demushkin}, \cite{Labute} and \cite{SerreDem}, a pro-$p$ Demushkin group for an odd prime number $p$ 
is completely characterised by invariants $d$ and $q = p^f$ with $f\ge 1$ as follows. 
The pro-$p$ group $G$ has an even number $d \ge 2$ of generators $x_1,\ldots,x_d$ subject to the single relation 
\begin{align*}
1 = x_1^q [ x_1,x_2] [x_3,x_4] \cdots [x_{d-1}, x_d] 
\end{align*}
where $[x,y] = x^{-1}y^{-1}xy$ denotes the commutator of elements $x,y \in G$. 
%

%
Our main result is the following theorem, 
see Theorems \ref{thm:Demushkin_groups_are_formal} and \ref{thm:Demushkin_p_equal_3}: 

\begin{theorem}\label{thm:Demushkin_groups_A3_formality_intro}
Let $p$ be an odd prime and let $G$ be a pro-$p$ Demushkin group with 
an even number of generators. 
For $q=p=3$, $\Cb(G,\F_3)$ is not $A_3$-formal. 
For $q =0$ or $q \ge 5$, $\Cb(G,\F_p)$ is $A_3$-formal.  
\end{theorem}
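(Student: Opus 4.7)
The plan is to invoke the Benson--Krause--Schwede theorem, which identifies $A_3$-formality of a dga $\Ah$ with the vanishing of a canonical class $\gamma_\Ah \in HH^{3,-1}(\Hba)$ in the bigraded Hochschild cohomology of the cohomology algebra. The entire theorem thus reduces to an explicit computation of $\gamma_{\Cb(G, \F_p)}$ for each pro-$p$ Demushkin group $G$ with odd $p$.

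The first step is to make the cohomology ring $\Lambda := \Hb(G, \F_p)$ concrete. For odd $p$, $\Lambda$ is a Poincar\'e duality algebra concentrated in degrees $0, 1, 2$, generated by $\chi_1, \ldots, \chi_d \in H^1$ and $\beta \in H^2$, with $\chi_i^2 = 0$, $\chi_i \beta = 0$, $\beta^2 = 0$, and cup products $\chi_i \chi_j = b_{ij} \beta$ given by a symplectic pairing that after a change of basis takes the form $\chi_{2k-1}\chi_{2k} = \beta$. Crucially, $\Lambda$ as a graded algebra does not depend on $q$; the invariant $q$ enters $\Cb(G, \F_p)$ only through higher structure, most visibly through the Bockstein $\beta^{(p)} \colon H^1 \to H^2$, which is trivial for $q = 0$ and for $q = p^f$ with $f \ge 2$, but sends $\chi_1$ to a nonzero multiple of $\chi_1 \chi_2$ when $q = p$.

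Next I would compute the bigraded Hochschild group $HH^{3,-1}(\Lambda)$ using a small bimodule resolution adapted to the presentation of $\Lambda$, reducing the question to a concrete finite-dimensional linear-algebra problem, together with an explicit description of the Hochschild coboundaries coming from 2-cochains. To identify $\gamma_{\Cb(G, \F_p)}$ inside this group I would perform an explicit homotopy transfer: choose $\F_p$-linear lifts $\widetilde{\chi}_i, \widetilde{\beta} \in \Cb(G, \F_p)$ of the cohomology generators together with a contracting homotopy for the augmentation, and read off $m_3$ as the obstruction to strict multiplicativity of the lifts. When $q \ne p$, the vanishing of the Bockstein permits a choice of lifts making $m_3$ a Hochschild coboundary, giving $\gamma_{\Cb(G, \F_p)} = 0$ and hence $A_3$-formality. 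When $q = p$ the Bockstein produces an extra term in $m_3$; for $p \ge 5$ this term can still be absorbed by a gauge change, but for $p = 3$ it survives.

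The main technical obstacle is proving non-triviality of $\gamma_{\Cb(G, \F_3)}$ in $HH^{3,-1}(\Lambda)$ when $p = q = 3$. Note that the indeterminacy of the triple Massey product $\langle \chi_1, \chi_1, \chi_1 \rangle$ already exhausts $H^2(G, \F_3)$, so the obstruction must be detected at the finer Hochschild level rather than by a single Massey product. I would verify that the explicit Bockstein-induced 3-cocycle lies outside the image of the Hochschild differential from 2-cochains, for instance by pairing it with a specifically constructed Hochschild cycle built from $\chi_1 \otimes \chi_1 \otimes \chi_1$. The contrasting case $p \ge 5$ with $q = p$ works because the Bockstein-driven obstruction first appears in the operation $m_p$, landing in the higher bigraded group $HH^{p, 2-p}$, while $m_3$ remains trivialisable independently of $q$.
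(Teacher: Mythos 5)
Your overall architecture coincides with the paper's: reduce $A_3$-formality to the vanishing of the Benson--Krause--Schwede class in $\HH^{3,-1}(\Hb(G,\F_p))$, compute it by homotopy transfer over a small (Koszul-type) bimodule complex, and, for $p=q=3$, detect non-triviality by evaluating at $\chi_1\otimes\chi_1\otimes\chi_1$, where every Hochschild coboundary dies because $\chi_1\cup\chi_2+\chi_2\cup\chi_1=0$ while the transferred $m_3(\chi_1,\chi_1,\chi_1)$ is nonzero (in the paper this value is forced by the relation $x_1^3[x_1,x_2]\cdots$ via Dwyer's unipotent criterion, since $\binom{3}{3}=1$; your Bockstein/Kraines heuristic points at the same phenomenon, though you would still have to carry out the computation for a concrete transfer). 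So the negative half of the statement is essentially the paper's argument in outline.

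The genuine gap is the positive half, $q=0$ or $q\ge 5$. Your proposed mechanism --- ``the vanishing of the Bockstein permits a choice of lifts making $m_3$ a Hochschild coboundary'' --- is neither the right invariant nor an argument. The Bockstein is nonzero for $q=p\ge 5$, yet the theorem asserts $A_3$-formality there; your only remedy is the unsubstantiated claim that the extra term ``can be absorbed by a gauge change.'' Conversely, when the Bockstein does vanish ($q=0$ or $q=p^f$, $f\ge2$) nothing becomes automatic: for $d\ge 4$ the transferred cochain $\kappa_3=m_3|_{K_3^3}$ is genuinely nonzero for \emph{every} admissible choice of transfer data (it is nonzero on elements such as $\chi_1\ot(\chi_1\ot\chi_2+\chi_{2i}\ot\chi_{2i-1})$, independently of $q$), so one must actually exhibit it as a coboundary. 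In the paper this is the bulk of the work: a basis of $K_3^3(\Hb)$ is written down, $\kappa_3$ is evaluated on each basis vector by constructing explicit lifts $G\to U_4(\F_p)$ of Dwyer-type homomorphisms, whose existence rests on the numerical fact $\binom{q}{3}\equiv 0 \bmod p$ exactly when $q\neq 3$ (not on the Bockstein, which corresponds to a different secondary operation), and finally an explicit $\F_p$-linear map $\sigma\colon R\to H^1$ is produced, by solving a linear system, with $\partial\sigma=\kappa_3$. Your proposal contains no substitute for these steps, so as written it does not establish $A_3$-formality for $q=0$ or $q\ge5$; it only reduces the theorem to the computation that constitutes the paper's Section on the proof of the formality case.
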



Demushkin groups are Poincar\'e groups of dimension two and play an important role for example in number theory since the maximal pro-$p$ quotients of absolute Galois groups of local fields that contain a primitive $p$-th root of unity are Demushkin groups 
or are trivial (see for example \cite[Th\'eor\`eme 4.2]{SerreDem}). 
Demushkin groups are fundamental building blocks of the class of elementary type pro-$p$ groups in the sense of Efrat. 
These groups are defined inductively as the class of pro-$p$ groups that includes finitely generated free pro-$p$ groups and Demushkin groups 
and is closed under taking free pro-$p$ products and semi-direct products with $\Z_p$. 
For the latter one requires that all groups are equipped with an orientation (see \cite[Section 3]{EfratDem} and e.g.~\cite[Section 4]{MPQT}). 
For positive results on Efrat's elementary type conjecture see \cite{Efrat_pro-p}, \cite{Efrat_Hasse}, \cite[Chapter 12]{FriedJarden}. 
Demushkin groups also arise as pro-$p$ completions of fundamental
groups of compact surfaces $\Sigma$ of genus $g \ge 1$ when $\Sigma$ is orientable 
and $g \ge 2$ when $\Sigma$ is not orientable (see also \cite{LLMS}). 


\begin{example}\label{example:realizable_Demushkin_group_intro}
Let $p=3$ and let $G$ be the pro-$p$ group with generators $x_1,x_2,x_3,x_4$ subject to the single relation $1 = x_1^3 [ x_1,x_2] [x_3,x_4]$. 
Following \cite{EfratDem} and \cite[page 254]{Koenigsmann}, the group $G$
is realisable as the maximal pro-$3$ Galois group $G_F(3)$ of the field $F=\Q_3(\zeta_3)$ 
where $\zeta_3$ is a root of unity of order $3$. 
Thus, Theorem \ref{thm:Demushkin_groups_A3_formality_intro} shows that 
$\Cb(G_F,\F_p)$ is not $A_3$-formal for $F=\Q_3(\zeta_3)$. 
\end{example}

\begin{example}\label{example:semidirect_product_at_p=3_is_not_A3-formal_intro} 
The pro-$3$-group with generators $x_1, x_2$ and relation $x_1^3[x_1,x_2]=1$ is isomorphic to 
the semi-direct product $\Z_3 \rtimes_{\car} \Z_3$ 
where $\car \colon \Z_3 \to 1 + 3\Z_3$ is the cyclotomic character. 
Theorem \ref{thm:Demushkin_groups_A3_formality_intro} thereby shows that 
$\Z_3 \rtimes_{\car} \Z_3$ is not $A_3$-formal even though $\Z_3$ is intrinsically $A_{\infty}$-formal. 
This example demonstrates that $A_3$-formality does not distribute over semi-direct products of pro-$p$ groups in general. 
However, we do not know of such a counterexample for $p\ge 5$. 
\end{example}


\begin{example}
For $p=3$ and $q=3^f$ with $f \ge 1$, let $G = \Z_3 \rtimes_{\car} \Z_3$ with cyclotomic character $\car \colon \Z_3 \to 1 + q\Z_3$. 
Then the cohomology algebra $\Hb(G,\F_3)$ is isomorphic to the exterior algebra over $\F_3$ in two generators in degree one. 
Theorem \ref{thm:Demushkin_groups_A3_formality_intro} shows that the differential graded algebras for $q=3$ and $q=3^f$ with $f\ge 2$ 
are not quasi-isomorphic as $A_3$-algebras. 
\end{example}



\subsection*{Relation to the Massey vanishing conjecture of Min\'a\v c--T\^an}
Before we outline the proof of  Theorem \ref{thm:Demushkin_groups_A3_formality_intro}, 
we describe the relation of our work to the Massey vanishing conjecture. 
%
Min\'a\v c and T\^an conjectured in \cite[Conjecture 1.1]{MT0} that, for every field $F$ and prime $p$, 
$G_F$ {\em satisfies $n$-Massey vanishing} with respect to $p$, i.e., 
all $n$-fold Massey products of elements in $H^1(G_F,\F_p)$ vanish whenever they are defined. 
%
By the work of Matzri \cite{Matzri}, Efrat--Matzri \cite{EM17} and Min\'a\v c--T\^an \cite{MT1}, 
all fields satisfy triple Massey vanishing with respect to all primes. 
In \cite{HarpazWittenberg}, Harpaz--Wittenberg showed that number fields satisfy $n$-Massey vanishing with respect to all primes. 
More recently, Merkurjev--Scavia proved in \cite{MS2} that all fields satisfy fourfold Massey vanishing with respect to $p=2$. 
Other cases of the conjecture have been proven in \cite{PQ}, \cite{PSz} and \cite{Quadrelli}. 
The vanishing of Massey products has concrete consequences for the structure of the Zassenhaus filtration of an absolute Galois 
and thereby led to new examples of profinite groups which are not absolute Galois groups of a field (see for example \cite{MT1, MT2}). 


In \cite[Definition 4.5]{MT0bis}, Min\'a\v c and T\^an formulate the following related property. 
Let $G$ be a profinite group and $p$ be a prime number. %
Then $G$ is said to have the 
{\em cup-defining $n$-fold Massey product property} (with respect to $\F_p$) 
if for every $\chi_1, \ldots, \chi_n \in H^1(G,\F_p)$ with 
$0 = \chi_1 \cup \chi_2 = \chi_2 \cup \chi_3 = \cdots = \chi_{n-1} \cup \chi_n$ 
the $n$-fold Massey product $\langle \chi_1, \ldots, \chi_n \rangle$ is defined.  
For $n\ge 4$, this is a non-trivial condition, 
and, in \cite[Remark 4.4]{MT0bis}, Min\'a\v c--T\^an show that not all pro-$p$ groups 
have the cup-defining $n$-fold Massey product property with respect to $\F_p$.  
In \cite[Question 4.2]{MT0bis}, Min\'a\v c--T\^an ask whether every  
Galois group of a maximal $p$-extension of a field $F$ containing a primitive $p$-th root of unity 
has the cup-defining $n$-fold Massey product property with respect to $\F_p$ 
(see also \cite[Section 8]{MTE}). 
They show that, for two pro-$p$ groups $G_1$ and $G_2$, 
the free pro-$p$ product $G_1 \ast G_2$ 
has the cup-defining $n$-fold Massey product property with respect to $\F_p$ 
if and only if both $G_1$ and $G_2$ do. 
Moreover, in \cite[Proposition 4.1]{MT0bis}, Min\'a\v c--T\^an prove that pro-$p$ Demushkin groups 
have the cup-defining $n$-fold Massey product property with respect to $\F_p$. 
Together with their work in \cite{MT2}, this implies that pro-$p$ Demushkin groups 
have the following stronger property. 

We say that a profinite group $G$ satisfies {\em strong} $n$-Massey vanishing with respect to $p$ 
if for every $\chi_1, \ldots, \chi_n \in H^1(G,\F_p)$ with 
$0 = \chi_1 \cup \chi_2  
= \cdots = \chi_{n-1} \cup \chi_n$ 
the $n$-fold Massey product $\langle \chi_1, \ldots, \chi_n \rangle$ is defined and {\em vanishes}. 
For $n\ge 4$, this is a strictly stronger condition than $n$-Massey vanishing. 
By the work of Min\'a\v c--T\^an in \cite[Proposition 4.1]{MT0bis} and \cite[Theorem 4.3]{MT1}, 
pro-$p$ Demushkin groups satisfy strong $n$-Massey vanishing with respect to $p$ and all $n \ge 3$.   
An independent proof that pro-$p$ Demushkin groups satisfy strong $n$-Massey vanishing for all $n \ge 3$ 
was given by P\'al--Szab\'o in \cite[Theorem 3.5]{PSz}. 
Moreover, by \cite[Theorem 1]{MMRT}, the absolute Galois groups of number fields which do not contain a primitive $p$th root of unity 
satisfy strong $n$-Massey vanishing with respect to $p$ for all $n\ge 3$.  
Strong vanishing of triple Massey products is a necessary condition for the $A_3$-formality of $\Cb(G,\F_p)$. 
We note, however, that $A_3$-formality is a significant strengthening of the vanishing of the Massey product obstructions  
since $A_3$-formality requires that a specific element in the triple Massey product of elements in $\Hb(G, \F_p)$ vanishes 
and that defining systems of triple Massey products can be chosen compatibly. 
%

\subsection*{New questions} 
Quadrelli showed in \cite{Quadrelli} that elementary type pro-$p$ groups satisfy strong $n$-Massey vanishing. 
Theorem \ref{thm:Demushkin_groups_A3_formality_intro} implies that elementary type pro-$3$ groups are not $A_3$-formal in general. 
For $p\ge 5$, we do not know whether there is an obstruction to $A_3$-formality and may ask the following 

\begin{question}\label{question:ETp_intro}
Let $p\ge 5$ be a prime number. Are all elementary type pro-$p$ groups $A_3$-formal? 
\end{question}

We note that free pro-$p$ groups are $A_{\infty}$-formal (see Proposition \ref{prop:free_is_formal}). 
We can also show that the free pro-$p$ product of pro-$p$ groups which are Koszul and $A_3$-formal 
is again $A_3$-formal. 
Moreover, by \cite[Theorem A]{MPQT}, the $\F_p$-cohomology algebra of elementary type pro-$p$ groups is Koszul. 
Hence, the only case missing for an answer to Question \ref{question:ETp_intro} 
is the one of semi-direct products. 
Based on the results of \cite{MMRT}, we also ask the following 

\begin{question}\label{question:number_fields_intro}
Let $F$ be a number field which does not contain a primitive $p$th root of unity. 
Is $\Cb(G_F,\F_p)$ then $A_3$-formal? 
\end{question}


\subsection*{Outline of the proof} 
We now give a brief outline of the proof of  Theorem \ref{thm:Demushkin_groups_A3_formality_intro}. 
While $n$-Massey vanishing for Demushkin groups is a direct consequence of the non-degeneracy of the cup product, 
showing $A_3$-formality is much more involved. 
%
Let $p$ be an odd prime. 
For every pro-$p$ Demushkin group $G$ we compute the {\it canonical class} $\gamma_G$ of the differential graded algebra $\Cb(G,\F_p)$ 
in the Hochschild cohomology group $\HH^{3,-1}(\Hb(G,\F_p))$. 
The canonical class of a dga was introduced by Benson--Krause--Schwede in \cite{BKS} in the context of the realizability of modules over Tate cohomology. 
It follows from the general theory of $A_3$-algebras that the class $\gamma_G$ 
vanishes if and only if $\Cb(G,\F_p)$ is $A_3$-formal. 
In fact, 
the canonical class $\gamma_G$ is the Hochschild cohomology class of the homotopy associator 
on $\Hb(G,\F_p)$ 
which is needed to construct a lift $\Hb(G,\F_p) \to \Cb(G,\F_p)$ of the identity on $\Hb(G,\F_p)$ as $A_3$-algebras. 
In Theorem \ref{thm:Demushkin_two_generators}, we first compute the canonical class for $q \ne 3$ and $d=2$ 
since the proof in the case of just two generators is significantly simpler while still demonstrating the principal ideas. 
We then show that $\gamma_G$ is non-trivial for $q=3$ and all even $d \ge 2$ in 
Theorem \ref{thm:Demushkin_p_equal_3}. 
The failure of the vanishing of $\gamma_G$ in this case relies on the fact that a certain group 
homomorphism cannot be lifted since $\binom{3}{3}=1$, 
whereas $\binom{q}{3}=0$ in $\F_p$ when $q =p^f$ and $f\ge 2$ for $p=3$. 
We give an alternative proof of Theorem \ref{thm:Demushkin_p_equal_3} in Remark \ref{rem:p3_Demushkin_triple_Massey_still_vanishes} 
and explain how the defining system of the canonical class can be modified to 
show the vanishing of the corresponding triple Massey product. 
We then provide the proof of Theorem \ref{thm:Demushkin_groups_are_formal} which is about the case $q \ne 3$ and $d\ge 4$ 
which is much more involved and relies on explicit computations both in group cohomology using results of Dwyer 
and in Hochschild cohomology. 

\begin{remark}\label{rem:Demushkin_groups_are_Koszul_intro}
The proof of Theorem \ref{thm:Demushkin_groups_A3_formality_intro} relies on the fact that the quadratic algebra $\Hb(G,\F_p)$ for a pro-$p$ Demushkin group is Koszul.  
The latter is known by the work of Min\'a\v c--Pasini--Quadrelli--T\^an in \cite[Theorem 5.2]{MPQT}. 
In fact, they prove the stronger result that $\Hb(G,\F_p)$ is a PBW-algebra which implies that $\Hb(G,\F_p)$ is Koszul. 
We provide an alternative proof that $\Hb(G,\F_p)$ is Koszul in Section \ref{sec:Demushkin_groups_are_Koszul} 
for completeness and convenience of the reader. 
We recall that Positselski and Voevodsky conjectured that all Galois cohomology algebras are Koszul \cite[\S 0.1, page 128]{PoGalois}. 
Further results on the Koszulity of Galois cohomology algebras we refer to \cite{MPPT}, \cite{MPQT}, \cite{MPQT2}, \cite{PQ}, and \cite{PoGalois}. 
\end{remark}

\begin{remark}
We note that the case $q=3$ provides examples of differential graded algebras arising from group cohomology 
which are Koszul and for which all $n$-fold Massey products which are defined vanish but which are not $A_3$-formal. 
\end{remark}

\begin{remark}\label{rem:p_being_even_or_odd_intro}
In the proof of Theorem \ref{thm:Demushkin_groups_A3_formality_intro} 
we make frequent use of the assumption that $p$ is odd. 
We do not know whether pro-$2$ Demushkin groups are $A_3$-formal or not. 
For $q \ge 5$, we do not know for which integer $n > 3$ pro-$p$ Demushkin groups are not $A_n$-formal with respect to $p$.  
That such an $n$ exists follows from Positselski's work in \cite[\S 6]{Po}. 
\end{remark}


\subsection*{Contents} 
We intended to write the paper as self-contained as possible since we found it challenging to find proofs for all the results we use in the literature. 
We hope that the reader will appreciate the additional effort. 
%
In Section \ref{sec:A3-algebras} we define $A_3$-algebras and morphisms between them. 
In Section \ref{sec:A3_formality} we show that every differential graded algebra has a minimal $A_3$-model 
which is unique up to quasi-isomorphisms of $A_3$-algebras 
and introduce the notion of $A_3$-formal algebras. 
In Section \ref{sec:A3_formal_and_Massey} we discuss the relationship between $A_3$-algebra structures and Massey products. 
We also provide a simple example of a differential graded algebra which is not $A_3$-formal 
but for which all triple Massey products which are defined vanish.  
%
In Section \ref{sec:A3_and_HH} we recall graded Hochschild cohomology groups of graded algebras 
and introduce the canonical class of Benson--Krause--Schwede in Hochschild cohomology. 
We then show that a differential graded algebra $\Ah$ over a field is $A_3$-formal if and only if 
the canonical class of $\Ah$ is trivial. 
In Section \ref{sec:A3_and_Koszul} we recall Koszul algebras 
and show in Section \ref{sec:canonical_class_Koszul} that the Koszul complex allows for a simpler construction of 
the canonical class of a differential graded algebra whose cohomology algebra is Koszul. 
In Section \ref{sec:group_coh} we recall 
Dwyer's theorem on Massey products in group cohomology 
and its consequences for profinite groups whose cohomology algebra is Koszul. 
In Section \ref{sec:Demushkin_groups_are_Koszul}, we provide a proof of the fact that the cohomology algebra of a Demushkin group is Koszul. 
%
In Section \ref{sec:A3_formality_for_Demushkin_groups} we formulate our main results on Demushkin groups 
and construct a concrete map which represents the canonical class in Section \ref{sec:Demushkin_canonical_class}. 
We then prove Theorem \ref{thm:Demushkin_p_equal_3} on the case $q=3$ in Section \ref{sec:Demushkin_q=3}. 
This yields the first part of Theorem \ref{thm:Demushkin_groups_A3_formality_intro}. 
In Section \ref{sec:proof_of_Dem_thm} we provide the proof of Theorem \ref{thm:Demushkin_groups_are_formal} for the case $q \ne 3$ 
and thereby finish the proof of Theorem \ref{thm:Demushkin_groups_A3_formality_intro}. 

\subsection*{Acknowledgements} 
We are very grateful to Jan Min\'a\v c for helpful comments. 
GQ would also like to thank Mads Hustad Sandøy for valuable conversations on $A_\infty$-algebras.


\section{$A_3$-algebras, $A_3$-formality and Massey products}

In this section we recall the theory of $A_3$-algebras needed for this paper. 

\subsection{$A_3$-algebras}
\label{sec:A3-algebras}

Let $\F$ be a field. 
For graded $\F$-vector spaces $\Ah$ and $\Bh$, we denote by $\uHom(\Ah,\Bh)$ the set of graded $\F$-linear maps $\Ah \to \Bh$. 
For $j\in \Z$ and a graded $\F$-vector space $\Ah$, we write $\Ah[j]$ for the graded vector space given in degree $i$ by $\Ah[j]^i = \Ah^{i+j}$. 
Tensor products $\otimes$ will be over $\F$, unless otherwise stated. 
We will follow the notation and sign convention of \cite{Keller}, i.e., for graded maps $f$ and $g$ and elements $x$, $y$ 
we have 
\begin{align*}
(f \otimes g) (x \otimes y) = (-1)^{|g||x|}f(x) \otimes g(y)
\end{align*}
where $|g|$ and $|x|$ denote the degrees of $g$ and $x$, respectively.

\begin{definition}\label{def:An_algebra}
Let $\Ah = \oplus_{i \ge 0} \Ah^i$ be a non-negatively graded $\F$-vector space with $\Ah^0=\F$. 
Then $\Ah$ is called an \emph{$A_3$-algebra over $\F$} if, for $i=1,2,3$, there are graded $\F$-linear maps  
\begin{align*}
m_i \colon \Ah^{\otimes i} \to \Ah[2-i]
\end{align*}
satisfying the following relations:  
We have $m_1m_1=0$, i.e., $(\Ah,m_1)$ is a cochain complex. 
%
We have 
\begin{align}\label{eq:m1m2}
m_1m_2  = m_2(m_1 \otimes \One + \One \otimes m_1)
\end{align} 
as maps $\Ah^{\otimes 2} \to \Ah$, 
where $\One$ is the identity map of $\Ah$. 
Hence $m_1$ is a graded derivation with respect to the multiplication $m_2$. 
We note that the sign rule implies that we have 
\begin{align*}
m_2(m_1 \otimes \One + \One \otimes m_1)(x \otimes y) = m_1(x)\otimes y + (-1)^{|x|}x \otimes m_1(y)
\end{align*}
since $m_1$ has degree $1$. 
%
For the map $m_3$ we require that 
\begin{equation}\label{eq:m1m2m3}
\begin{aligned}
& m_2(\One \otimes m_2 - m_2 \otimes \One) \\
= ~ & m_1m_3 + m_3(m_1 \otimes \One \otimes \One + \One \otimes m_1 \otimes \One + \One \otimes \One \otimes m_1)
\end{aligned}
\end{equation}
in $\uHom(\Ah^{\otimes 3}, \Ah)$. 
Hence, $m_2$ is associative up to homotopy. 
However, there is no further coherence condition on the homotopy of the associator.  
\end{definition}

\begin{example}
Every graded $\F$-algebra is an $A_3$-algebra with trivial $m_1$ and $m_3$.    
Every differential graded algebra over $\F$ is an $A_3$-algebra with $m_i=0$ for all $i\ge 3$. 
\end{example}

\begin{definition}\label{def:morphism_of_A3_algebra}
Let $\Ah$ and $\Bh$ be $A_3$-algebras over $\F$. 
A \emph{morphism of $A_3$-algebras} $f \colon \Ah \to \Bh$ is 
a triple $(f_1,f_2,f_3)$ of graded $\F$-linear maps $f_i \colon \Ah^{\otimes i} \to \Bh[1-i]$ 
satisfying the following relations: 
We have $f_1\ma_1 = \mb_1f_1$, i.e., $f_1$ is a morphism of cochain complexes; 
we have 
\begin{align}\label{eq:relation_f1m2}
f_1\ma_2  - \mb_2(f_1 \otimes f_1) = \mb_1f_2 + f_2(\ma_1\otimes \One_{\Ah} + \One_{\Ah} \otimes \ma_1), 
\end{align}
i.e., $f_1$ commutes with multiplication up to homotopy given by $f_2$; 
and we have 
\begin{equation}\label{eq:relation_f3} 
\begin{aligned}
& ~  \mb_1f_3 + \mb_2(f_1 \otimes f_2 - f_2 \otimes f_1) +  \mb_3(f_1 \otimes f_1 \otimes f_1) \\ 
= & ~ f_1\ma_3 + f_2 (\ma_2 \otimes \One_{\Ah} - \One_{\Ah} \otimes  \ma_2) \\
& ~ + f_3(\ma_1 \otimes \One_{\Ah}^{\otimes 2} + \One_{\Ah} \otimes \ma_1 \otimes \One_{\Ah} + \One_{\Ah}^{\otimes 2} \otimes \ma_1). 
\end{aligned} 
\end{equation} 
A morphism of $A_3$-algebras $f$ is called \emph{strict} if $f_2$ and $f_3$ are trivial. 
The identity morphism is the strict morphism with $f_1 = \id$. 
A morphism of $A_3$-algebras $f$ is called a \emph{quasi-isomorphism} if $f_1$ is a quasi-isomorphism of underlying cochain complexes. 
The composition of two $A_3$-morphisms $f \colon \Bh \to \Ch$ and $g \colon \Ah \to \Bh$ is given by
\begin{equation*}
\begin{aligned}
(f \circ g)_1 & = f_1 \circ g_1 \\
(f \circ g)_2 & = f_2 \circ (g_1 \ot g_1) + f_1 \circ g_2 \\
(f \circ g)_3 & =  f_3 \circ (g_1 \ot g_1 \ot g_1) - f_2 \circ (g_2 \ot g_1 - g_1 \ot g_2) + f_1 \circ g_3.
\end{aligned}
\end{equation*}
\end{definition}


\begin{lemma}\label{lemma:A3-isom}
Let $f \colon \Ah \to \Bh$ be a morphism of $A_3$-algebras. 
Then $f$ is an isomorphism if and only if $f_1$ is an isomorphism. 
\end{lemma}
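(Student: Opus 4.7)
The forward direction is immediate from the composition formula of Definition \ref{def:morphism_of_A3_algebra}: if $g \colon \Bh \to \Ah$ is a two-sided inverse to $f$ as a morphism of $A_3$-algebras, then $f_1 \circ g_1 = (f \circ g)_1 = \id_{\Bh}$ and $g_1 \circ f_1 = (g \circ f)_1 = \id_{\Ah}$, so $g_1$ is a two-sided inverse of $f_1$ as a graded $\F$-linear map.

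For the reverse direction, assume $f_1$ is an isomorphism. The plan is to construct an inverse $g$ level by level. Since the identity morphism of $\Bh$ is strict, any right inverse $g$ to $f$ must satisfy $(f \circ g)_1 = \id_{\Bh}$, $(f \circ g)_2 = 0$, and $(f \circ g)_3 = 0$. Isolating the term $f_1 \circ g_i$ in the composition formulae forces the definitions
\begin{align*}
g_1 & := f_1^{-1}, \\
g_2 & := - f_1^{-1} \circ f_2 \circ (g_1 \ot g_1), \\
g_3 & := - f_1^{-1} \circ \bigl( f_3 \circ (g_1 \ot g_1 \ot g_1) - f_2 \circ (g_2 \ot g_1 - g_1 \ot g_2) \bigr).
\end{align*}
The chain map identity $g_1 \ma_1 = \mb_1 g_1$ follows from $f_1 \ma_1 = \mb_1 f_1$ by pre- and post-composition with $f_1^{-1}$, so $g_1$ is already a morphism of cochain complexes.

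The substantive step, and the main obstacle, is to verify that this triple $(g_1, g_2, g_3)$ actually satisfies the $A_3$-morphism relations \eqref{eq:relation_f1m2} and \eqref{eq:relation_f3}. The strategy is to post-compose each putative equation for $g$ with $f_1$, substitute the defining formulae for $g_2$ and $g_3$ above, and then rewrite using the $A_3$-algebra relations \eqref{eq:m1m2} and \eqref{eq:m1m2m3} for $\Ah$ and $\Bh$ together with the already available $A_3$-morphism relations for $f$; after a routine but lengthy cancellation, each relation collapses to a tautology, and invertibility of $f_1$ then yields the required identity for $g$ itself. Once $g$ is confirmed to be an $A_3$-morphism with $f \circ g = \id_{\Bh}$, the symmetric construction applied to the composition formula for $g \circ f$ produces a left inverse $h \colon \Bh \to \Ah$, and the standard categorical argument $g = (h \circ f) \circ g = h \circ (f \circ g) = h$ shows that $g$ is a two-sided inverse. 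Hence $f$ is an isomorphism of $A_3$-algebras.
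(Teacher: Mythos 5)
Your proposal is correct and follows essentially the same route as the paper: you construct the candidate inverse by the identical formulas $g_1=f_1^{-1}$, $g_2=-f_1^{-1}\circ f_2\circ(g_1\ot g_1)$, $g_3=-f_1^{-1}\circ\bigl(f_3\circ(g_1\ot g_1\ot g_1)-f_2\circ(g_2\ot g_1-g_1\ot g_2)\bigr)$, and leave the verification of the morphism relations as a routine computation, just as the paper does. Your extra remarks on the forced nature of these formulas and on upgrading the right inverse to a two-sided inverse are a welcome bit of additional care but do not change the argument.
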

\begin{proof}
Assume that $f_1$ is an isomorphism. 
We need to show that there is an $A_3$-inverse $g \colon \Bh \to \Ah$ of $f$. 
We set $g_1=f_1^{-1}$ to be the inverse of $f_1$. 
We then define $g_2 = - f_1^{-1} \circ (f_2(g_1\ot g_1))$ 
and 
\begin{align*}
g_3 = f_1^{-1} \circ ( - f_3 (g_1 \ot g_1 \ot g_1) + f_2 \circ (g_2 \ot g_1 - g_1 \ot g_2)). 
\end{align*}
One can then check that the required relations for the compositions are satisfied.  
\end{proof}


\subsection{$A_3$-formality}
\label{sec:A3_formality}

We adopt the following terminology from the theory of $A_{\infty}$-algebras: 

\begin{definition}
An $A_3$-algebra $\Hh$ is called \emph{minimal} if $m^{\Hh}_1=0$. 
A {\it minimal model} for an $A_3$-algebra $\Ah$ is a minimal $A_3$-algebra $\Hh$ 
together with a quasi-isomorphism of $A_3$-algebras $\Hh \to \Ah$. 
\end{definition}

\begin{lemma}\label{lemma:A3-uniqueness}
Let $f \colon \Ah \to \Bh$ be a morphism between minimal $A_3$-algebras. 
Then $f$ is an isomorphism if and only if it is a quasi-isomorphism. 
\end{lemma}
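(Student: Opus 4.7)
The plan is to combine the preceding Lemma \ref{lemma:A3-isom} with the observation that minimality trivialises the passage between $f_1$ and $H(f_1)$. First I would note that if $\Ah$ and $\Bh$ are minimal, then $m_1^{\Ah} = 0 = m_1^{\Bh}$, so the underlying cochain complexes have vanishing differential. Consequently the cohomology of each complex coincides, as a graded $\F$-vector space, with the complex itself, and the induced map $H(f_1)$ is literally equal to $f_1$.

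Given this, the quasi-isomorphism hypothesis unpacks cleanly. By Definition \ref{def:morphism_of_A3_algebra}, $f$ is a quasi-isomorphism of $A_3$-algebras precisely when $f_1$ is a quasi-isomorphism of cochain complexes, i.e.\ when $H(f_1)$ is an isomorphism of graded vector spaces. Under the identification above, this is exactly the statement that $f_1$ itself is a graded $\F$-linear isomorphism.

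From here, Lemma \ref{lemma:A3-isom} applies directly: since $f_1$ is an isomorphism, there exist higher components $g_2, g_3$ assembled from $f_1^{-1}$, $f_2$, and $f_3$ making $g = (f_1^{-1}, g_2, g_3)$ into a two-sided $A_3$-inverse of $f$. Hence $f$ is an isomorphism of $A_3$-algebras. The converse direction is immediate from the definitions, since any $A_3$-isomorphism has $f_1$ a vector-space isomorphism and is therefore in particular a quasi-isomorphism.

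There is essentially no obstacle here; the content of the lemma is the observation that in the minimal setting the distinction between ``$f_1$ is a quasi-isomorphism of complexes'' and ``$f_1$ is an isomorphism of graded vector spaces'' collapses, after which Lemma \ref{lemma:A3-isom} does all the remaining work.
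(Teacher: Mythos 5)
Your proposal is correct and follows exactly the paper's argument: the paper likewise observes that for minimal $A_3$-algebras $f_1$ is an isomorphism if and only if it is a quasi-isomorphism (since the differentials vanish) and then invokes Lemma \ref{lemma:A3-isom}. You have simply spelled out the identification $H(f_1)=f_1$ that the paper leaves implicit.
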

\begin{proof}
This follows from Lemma \ref{lemma:A3-isom} and the fact that $f_1$ is an isomorphism if and only if it is a quasi-isomorphism when both $\Ah$ and $\Bh$ are minimal $A_3$-algebras.  
\end{proof}

Recall that a differential graded algebra $\Ah$ is called {\it connected} 
if $\Ah^i = 0$ for $i < 0$ and $\Ah^0=\F$. 
The next theorem is a variation of a theorem due to Kadeishvili in \cite{Kadeishvili82} (see also \cite{Kadeishvili23}). 

\begin{theorem}[Kadeishvili]\label{thm:A_3_Kadeishvili}
Let $\Ah$ be a connected differential graded algebra with cohomology algebra $\Hba$. 
Then  $\Hba$ can be equipped with the structure of an $A_3$-algebra under which it is a minimal model for $\Ah$ 
and such that the multiplication $\mh_2$ on $\Hba$ is induced by $\ma_2$. 
This structure is unique up to isomorphism of $A_3$-algebras. 
\end{theorem}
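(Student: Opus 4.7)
The plan is to construct a minimal $A_3$-structure on $\Hba$ and a quasi-isomorphism of $A_3$-algebras $\Hba \to \Ah$ by the method of homological perturbation, and then to prove uniqueness by lifting the identity on $\Hba$ to an $A_3$-isomorphism.

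For existence, first choose a vector-space decomposition $\Ah = B \oplus H' \oplus C$, where $B = \operatorname{Im} \ma_1$, the submodule of cocycles is $B \oplus H'$, and $C$ is a complement. This yields $\F$-linear graded maps $i \colon \Hba \to \Ah$ of degree $0$ with image $H'$ and $p \colon \Ah \to \Hba$ of degree $0$ satisfying $p \circ i = \id_{\Hba}$, together with a contracting homotopy $s \colon \Ah \to \Ah$ of degree $-1$ obeying $\ma_1 s + s \ma_1 = \id_{\Ah} - i p$. Set $m^H_1 = 0$, $f_1 = i$. For $x, y \in \Hba$, define $m^H_2(x, y) = p(i(x) \cdot i(y))$, which is the induced cohomology product, and $f_2(x, y) = -s(i(x) \cdot i(y))$. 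A direct computation using the homotopy identity and $\ma_1(i(x) \cdot i(y)) = 0$ shows that $f_2$ satisfies relation \eqref{eq:relation_f1m2}.

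Next, consider the "higher associator"
\begin{equation*}
E(x,y,z) = f_2(m^H_2(x,y), z) - f_2(x, m^H_2(y,z)) - \ma_2(f_1(x) \ot f_2(y,z)) + \ma_2(f_2(x,y) \ot f_1(z))
\end{equation*}
with Koszul signs suppressed. Using relation \eqref{eq:relation_f1m2} for $f_2$ together with strict associativity of $\ma_2$, one verifies that $E(x,y,z)$ is a cocycle in $\Ah$. Define $m^H_3(x,y,z) = p(E(x,y,z))$ and $f_3(x,y,z) = -s(E(x,y,z) - i m^H_3(x,y,z))$. Relation \eqref{eq:relation_f3} (with $\mb_3 = 0$) then holds by construction. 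Since $m^H_1 = 0$, the $A_3$ relation \eqref{eq:m1m2m3} on $\Hba$ reduces to strict associativity of $m^H_2$, which is automatic on cohomology. The map $f = (f_1, f_2, f_3) \colon \Hba \to \Ah$ is an $A_3$-quasi-isomorphism by construction since $f_1 = i$ induces the identity on $\Hba$.

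For uniqueness, suppose $\Hh = (\Hba, m^H_i)$ and $\Hh' = (\Hba, m'^H_i)$ are two such minimal models, equipped with $A_3$-quasi-isomorphisms to $\Ah$. Both $m^H_2$ and $m'^H_2$ coincide with the induced cohomology product, so their difference is zero. The difference $m^H_3 - m'^H_3$ is then a Hochschild $3$-cocycle (of bidegree that will be explained in Section \ref{sec:A3_and_HH}), and one constructs an $A_3$-morphism $\varphi = (\id, \varphi_2, \varphi_3) \colon \Hh \to \Hh'$ inductively: take $\varphi_1 = \id$ (forced by matching on cohomology), choose $\varphi_2 = 0$ (since the $m_2$'s already agree), and solve for $\varphi_3$ using the equation
\begin{equation*}
m'^H_3 - m^H_3 = \varphi_3(m^H_1 \ot \One^{\ot 2} + \cdots) - m'^H_1 \varphi_3
\end{equation*}
which, thanks to $m^H_1 = m'^H_1 = 0$, reduces to matching the two $m_3$'s up to the freedom visible through $\Hba$. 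Since $\varphi_1 = \id$ is a quasi-isomorphism and both sides are minimal, Lemma \ref{lemma:A3-uniqueness} implies that $\varphi$ is an isomorphism of $A_3$-algebras.

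The main obstacle is the sign juggling in the verification that $E(x,y,z)$ is a cocycle and the bookkeeping required so that the chosen $f_2$, $m^H_3$, $f_3$ simultaneously satisfy \eqref{eq:relation_f1m2} and \eqref{eq:relation_f3}. The construction invokes no deeper input than the existence of the contracting homotopy $s$ and the associativity of $\ma_2$, but the Koszul sign conventions of Section \ref{sec:A3-algebras} must be applied uniformly. Once this is done, uniqueness is routine given Lemma \ref{lemma:A3-isom} and Lemma \ref{lemma:A3-uniqueness}.
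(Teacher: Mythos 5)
Your existence argument is sound and is essentially the paper's route: the maps $f_2=-s\,\ma_2(i\ot i)$, $m^H_3=p(E)$, $f_3=-s(E-i\,m^H_3)$ are exactly the Merkulov-type formulas that the paper records in Remark \ref{rem:Merkulov_structure}, and the verification that $E$ is cocycle-valued and that \eqref{eq:relation_f1m2}, \eqref{eq:relation_f3} hold is the same computation as for $\Phi_3$ in \eqref{eq:def_of_Phi3_minimal}. The problem is the uniqueness part, which contains a genuine gap. For a morphism $\varphi=(\varphi_1,\varphi_2,\varphi_3)$ between two \emph{minimal} $A_3$-structures $(\Hba,0,\mh_2,m^H_3)$ and $(\Hba,0,\mh_2,m'^H_3)$ with $\varphi_1=\id$, every term of relation \eqref{eq:relation_f3} involving $\varphi_3$ is composed with $m_1=0$ and hence vanishes; the relation collapses to
\begin{align*}
m'^H_3 - m^H_3 \;=\; \varphi_2(\mh_2\ot\One-\One\ot\mh_2)-\mh_2(\One\ot\varphi_2-\varphi_2\ot\One),
\end{align*}
i.e.\ the difference of the two associators must be the Hochschild coboundary $\pm\dee^2\varphi_2$. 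So there is nothing to ``solve for $\varphi_3$'' ($\varphi_3$ is unconstrained and irrelevant), and choosing $\varphi_2=0$ forces $m^H_3=m'^H_3$ on the nose, which is not something you are entitled to assume.

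Consequently the real content of uniqueness is to produce a $\varphi_2$ with $\dee^2\varphi_2=m'^H_3-m^H_3$, and observing that $m^H_3-m'^H_3$ is a Hochschild cocycle is not enough: by Example \ref{example:exterior_algebra_on_2_gen} and Remark \ref{rem:dgas_with_exterior_algebra_as_cohomology}, a graded algebra can carry non-cohomologous $m_3$'s giving non-isomorphic minimal $A_3$-structures, so the coboundary statement must use the fact that both structures are models of the \emph{same} $\Ah$, i.e.\ it must use the quasi-isomorphisms $f,f'\colon\Hba\to\Ah$, which your argument never does. The paper closes this gap by proving Theorem \ref{thm:proj_extends_to_A_3}: the projection $p\colon\Ah\to\Hba$ extends to an $A_3$-morphism $(p,p_2,p_3)$ onto a minimal structure, with $p_2$ and $p_3$ constructed explicitly from the homotopy $h$; then for any other minimal model $(m'_3,f')$ the composite $p\circ f'$ is an $A_3$-morphism between minimal structures whose first component is the identity, hence an isomorphism by Lemma \ref{lemma:A3-uniqueness}. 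You would need either this construction of an $A_3$-morphism in the direction $\Ah\to\Hba$, or a direct homotopy-theoretic argument extracting $\varphi_2$ from $f$, $f'$ and the contracting homotopy $s$; as written, the uniqueness claim is unproved.
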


This is a well-known result for $A_{\infty}$-algebras (see for example \cite[Proof of Theorem 7.2.2]{Witherspoon}). 
We provide a sketch of the proof since it provides us with constructions that will be used later. 
Moreover, we could not find a proof of the uniqueness statement for $A_3$-algebras in the literature. 

\begin{proof}[Proof of Theorem \ref{thm:A_3_Kadeishvili}.]
%
We need to define a graded $\F$-linear map $\mh_3 \colon \Hba^{\otimes 3} \to \Hba$ such that $\mh_1 = 0, \mh_2, \mh_3$ satisfy the required relations 
together with an $A_3$-algebra morphism $f \colon \Hba \to \Ah$. 
We choose $f_1 \colon \Hba \to \ker \ma_1$ to be an $\F$-linear graded map which induces the identity on $\Hba$.   
Since $f_1$ is multiplicative on cohomology, we can find a graded $\F$-linear map  
$f_2 \colon \Hba \otimes \Hba \to \Ah$ of degree $-1$ satisfying  
\begin{align}\label{eq:f2_condition_in_minimal_thm}
\ma_1f_2 = f_1\mh_2  - \ma_2(f_1 \otimes f_1). 
\end{align}
See Remark \ref{rem:Merkulov_structure} for a formula for $f_2$. 
Now we define a graded $\F$-linear map $\Phi_3 \colon \Hba^{\otimes 3} \to \Ah[-1]$ by 
\begin{align}\label{eq:def_of_Phi3_minimal}
\Phi_3 = \ma_2(f_1 \otimes f_2 - f_2 \otimes f_1) - f_2(\mh_2 \otimes \One - \One \otimes \mh_2). 
\end{align}
We check that $\Phi_3$ has image in the cocycles of $\Ah$ 
and hence induces a graded map $[\Phi_3] \colon \Hba^{\otimes 3} \to \Hba[-1]$. 
We set $\mh_3 := [\Phi_3]$. 
By construction, the difference $f_1\mh_3 - \Phi_3$ has image in the coboundaries of $\Ah$. 
Thus, we can find a graded $\F$-linear map $f_3 \colon \Hba^{\otimes 3} \to \Hba[-1]$ such that 
\begin{align*}
f_1\mh_3 - \Phi_3 = \ma_1 f_3. 
\end{align*}
See Remark \ref{rem:Merkulov_structure} for a formula for $f_3$. 
By definition of $\Phi_3$ and $\mh_3$, relation \eqref{eq:relation_f3} is satisfied where we use that $\mh_1$ and $\ma_3$ are trivial.   
The uniqueness assertion follows from Lemma \ref{lemma:A3-uniqueness} and Theorem \ref{thm:proj_extends_to_A_3} below as follows:  
If $m'_3$ and $f' \colon \Hba \to \Ah$ is another choice which turns $\Hba$ into a minimal model for $\Ah$, 
then the composition of $A_3$-morphisms $p \circ f'$ is an isomorphism of $A_3$-algebras since $p \circ f'_1$ is the identity. 
\end{proof}


It remains to show that we can construct an $A_3$-algebra structure on $\Hba$ such that we can lift the projection $\Ah \to \Hba$ 
from a map of graded vector spaces to a morphism of $A_3$-algebras.

\begin{theorem}\label{thm:proj_extends_to_A_3}
Let $\Ah$ be a connected differential graded algebra 
and let $p \colon \Ah \to \Hba$ be a graded $\F$-linear projection onto the cohomology algebra of $\Ah$. 
Then there exists an $A_3$-algebra structure on $\Hba$ with $m_1=0$ and $m_2=\mh_2$ such that 
$p$ extends to a morphism of $A_3$-algebras $\Ah \to \Hba$. 
\end{theorem}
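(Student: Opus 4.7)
The plan is to dualize the construction used in the proof of Theorem~\ref{thm:A_3_Kadeishvili}, carrying the data along the projection $p$ rather than a section. First I would fix a graded decomposition $\Ah = B^\bullet \oplus H^\bullet \oplus L^\bullet$ where $B^\bullet = \Imm\ma_1$, $H^\bullet$ is a graded complement of $B^\bullet$ in $\ker\ma_1$ chosen so that $p$ restricts to an identification $H^\bullet \isoto \Hba$, and $L^\bullet$ is a graded complement of $\ker\ma_1$ in $\Ah$. Write $\iota \colon \Hba \into \Ah$ for the inclusion of $H^\bullet$; then $p\iota = \id_{\Hba}$, and setting $h|_{H^\bullet} = h|_{L^\bullet} = 0$ together with $h|_{B^\bullet} = (\ma_1|_{L^\bullet})^{-1}$ gives a contracting homotopy $h \colon \Ah \to \Ah[-1]$ satisfying
\[
\id - \iota p = \ma_1 h + h \ma_1, \quad p h = 0, \quad h \iota = 0, \quad h^2 = 0.
\]

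Next I would construct $p_2$ by the explicit formula
\[
p_2(a \otimes b) := p(h(a)\cdot b) + (-1)^{|a|}\,p(\iota p(a) \cdot h(b)).
\]
A direct application of the Leibniz rule for $\ma_1$ together with the identities $\id - \iota p = \ma_1 h + h\ma_1$ and $p\ma_1 = 0$ shows that $p\,\ma_2 - \mh_2(p \otimes p) = p_2 \circ (\ma_1 \otimes \One + \One \otimes \ma_1)$, which is relation~(\ref{eq:relation_f1m2}) with $\mh_1 = 0$. Conceptually, the obstruction $\Phi_2 := p\,\ma_2 - \mh_2(p\otimes p)$ kills coboundaries because $p\ma_1 = 0$ and vanishes on the cocycle representatives $\iota(\Hba)^{\otimes 2}$ because $p$ induces the multiplication on cohomology; K\"unneth then guarantees that $\Phi_2$ vanishes on all cocycles of $\Ah^{\otimes 2}$, making the lift possible.

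Now I would define the candidate $\mh_3 \colon \Hba^{\otimes 3} \to \Hba[-1]$ by means of the auxiliary map
\[
\Phi_3 := p_2(\ma_2 \otimes \One - \One \otimes \ma_2) - \mh_2(p \otimes p_2 - p_2 \otimes p) \colon \Ah^{\otimes 3} \to \Hba[-1]
\]
and the prescription $\mh_3(x\otimes y\otimes z) := \Phi_3(\iota x \otimes \iota y \otimes \iota z)$. Because $\mh_2$ inherits strict associativity from $\ma_2$ and $\mh_1 = 0$, both sides of (\ref{eq:m1m2m3}) vanish on $\Hba$ and the $A_3$-relation holds for any choice of $\mh_3$; thus $(\Hba, 0, \mh_2, \mh_3)$ is an $A_3$-algebra.

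Finally, to produce $p_3$ I would show that $\Phi_3 - \mh_3\circ(p\otimes p\otimes p)$ vanishes on cocycles of $\Ah^{\otimes 3}$, so that it lifts through the tensor-product differential to a map $p_3 \colon \Ah^{\otimes 3} \to \Hba[-2]$ satisfying (\ref{eq:relation_f3}) with $\ma_3 = 0$. Using the K\"unneth identification $H^*(\Ah^{\otimes 3}) \cong \Hba^{\otimes 3}$ and the fact that $p^{\otimes 3}$ annihilates coboundaries, this reduces to the identity
\[
\Phi_3 \circ (\ma_1 \otimes \One \otimes \One + \One \otimes \ma_1 \otimes \One + \One \otimes \One \otimes \ma_1) = 0,
\]
which is the dual of the cocycle statement $\ma_1 \circ \Phi_3 = 0$ used in the proof of Theorem~\ref{thm:A_3_Kadeishvili}. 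Its verification is a direct but sign-heavy bookkeeping using associativity of $\ma_2$, the Leibniz rule for $\ma_1$, and the defining relation $\Phi_2 = p_2 \circ (\ma_1 \otimes \One + \One \otimes \ma_1)$ for $p_2$; this is the main technical obstacle. Once in hand, the lifting produces $p_3$, and $(p, p_2, p_3)$ is the desired $A_3$-morphism $\Ah \to \Hba$ extending $p$.
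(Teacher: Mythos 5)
Your proposal is correct and is essentially the paper's own argument: the same splitting $\Ah^j = B^j\oplus H^j\oplus L^j$, the same contracting homotopy $h$ (with $ph=0$, $h\iota=0$), a transferred product defect $p_2 = p\,\ma_2\circ h_2$ (you take $h_2 = h\ot\One + \ip\ot h$ where the paper takes $\One\ot h + h\ot\ip$; both are homotopies between $\One^{\ot 2}$ and $\ip^{\ot 2}$, so this is immaterial), the same $\mh_3$ obtained by restriction along $\iota^{\ot 3}$ (your extra term $\mh_2(p\ot p_2-p_2\ot p)\iota^{\ot 3}$ vanishes anyway because $h\iota=0$ forces $p_2\iota^{\ot 2}=0$), and the same observation that \eqref{eq:m1m2m3} is automatic since $\mh_1=0$ and $\mh_2$ is associative. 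The only genuine divergence is at $p_3$: the paper writes $p_3=-p_2\circ\nn_3\circ h_3$ explicitly and verifies \eqref{eq:relation_f3} by direct computation, whereas you produce $p_3$ abstractly by showing that $\Phi_3-\mh_3\circ p^{\ot 3}$ annihilates the cocycles of $(\Ah^{\ot 3},d_3)$, using $\ker d_3=\Imm d_3\oplus\iota^{\ot 3}(\Hba^{\ot 3})$ (valid over a field by K\"unneth), and then solving $p_3\circ d_3 = \mh_3\circ p^{\ot 3}-\Phi_3$ by linear algebra. The identity $\Phi_3\circ d_3=0$ that you defer is indeed true and follows in a few lines from exactly the ingredients you name: $\nn_3 d_3=d_2\nn_3$, your relation $p_2d_2=p\ma_2-\mh_2(p\ot p)$, associativity $\ma_2\nn_3=0$, and associativity of $\mh_2$, which give $p_2\nn_3 d_3=-\mh_2(p\ot p)\nn_3=\mh_2(p\ot p_2-p_2\ot p)d_3$; combined with $p^{\ot 3}d_3=0$ and $(\Phi_3-\mh_3\circ p^{\ot 3})\circ\iota^{\ot 3}=0$ this closes the argument. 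What the paper's explicit $h_3$-formula buys is a closed-form $p_3$ with no appeal to K\"unneth; what your route buys is that the only computation needed is the single cocycle identity $\Phi_3\circ d_3=0$.
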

\begin{proof}
We choose a graded $\F$-linear map $\iota \colon \Hba \to \ker \ma_1$ which induces the identity on $\Hba$. 
To simplify the notation, we write $H \coloneqq \Hba$. 
Now we choose a homotopy $h$ from the identity $\One_\Ah$ to $\iota \circ p$, 
i.e., a graded $\F$-linear map $h \colon \Ah \to \Ah[-1]$ satisfying     
\begin{align}\label{eq:h_htpy}
\One_\Ah - \iota \circ p = \ma_1 \circ h + h \circ \ma_1, 
\end{align} 
in the following way:  
We write $H^j \subset \Ah^j$ for the image of the $j$th component of the injective map $\iota$, 
and identify $\Hba$ with $\oplus_j H^j \subset \Ah$ via $\iota$.  
We then identify $p = \pr_H \colon \Ah \to \Ah$ with the projection to $H$. 
Let $Z^j$ and $B^j$ denote the cocycles and coboundaries in $\Ah^j$, respectively. 
We have $Z^j = B^j \oplus H^j$. 
We choose a subspace $L^j \subset \Ah^j$ such that $\Ah^j = B^j \oplus H^j \oplus L^j$.  
We let $h \colon \Ah \to \Ah[-1]$ be the graded $\F$-linear map such that, for every $j$, 
$h^j \colon \Ah^j \to \Ah^{j-1}$ is the map which is trivial when restricted on $L^j \oplus H^j$ 
and equals $\left(\delta^{j-1}_{|L^{j-1}} \right)^{-1}$ when restricted to $B^j$, 
where $\delta^{j-1}_{|L^{j-1}} $ denotes the restriction of $\delta_\Ah = \ma_1$ to the subspace $L^{j-1} \subset \Ah^{j-1}$. 
It follows that the image of $h^j$ is $L^{j-1}$ and that $h^{j+1} \circ \delta_\Ah^{j} = \pr_{L^j}$ and $\delta_\Ah^{j-1} \circ h^j = \pr_{B^j}$.  
We define the graded $\F$-linear map $h_2 \colon \Ah^{\ot 2} \to \Ah^{\ot 2}[-1]$ by 
\begin{align*}
h_2 := \One \otimes h + h \otimes \ip 
\end{align*}
which is a homotopy between $\One \otimes \One$ and $\ip \otimes \ip$ as maps $\Ah^{\otimes 2} \to \Ah^{\otimes 2}$. 
We define the graded $\F$-linear map $\nn_3 \colon \Ah^{\otimes 3} \to \Ah^{\ot 2}$ by 
\begin{align*}
\nn_3 \coloneqq \ma_2 \ot \One- \One \otimes \ma_2  \colon \Ah^{\otimes 3} \to \Ah^{\ot 2}. 
\end{align*} 
We note that $\ma_2\nn_3=0$ 
since $\ma_2$ is associative. 
We then define the graded $\F$-linear map $m_3 \colon \Ah^{\otimes 3} \to H[-1]$ by 
\begin{align}\label{eq:def_of_m3_pmodel}
\mmp_3 := p \circ \ma_2 \circ h_2 \circ \nn_3 (\iota ^{\ot 3}) = p \ma_2 \left(h\ma_2\ot \ip - \One \ot h \ma_2 \right) \left(\iota ^{\ot 3}\right) 
\end{align}
where we use for the right-hand equality that $h$ vanishes on the image of $\iota$. 
Since $\mh_2$ is associative and $\mh_1=0$, relation \eqref{eq:m1m2m3} is satisfied and $\mmp_3$ turns $H$ into an $A_3$-algebra.

It remains to show that the projection map $p$ can be extended to a morphism of $A_3$-algebras. 
For $n=2$, we define the graded $\F$-linear map $p_2 \colon \Ah^{\otimes 2} \to H[-1]$ by 
\begin{align*}
p_2 := p \circ \ma_2 \circ h_2 = p \ma_2 (\One \otimes h + h \otimes \ip).
\end{align*}
We then get, using the definition of $\mh_2$ as $p \circ \ma_2(\iota \otimes \iota)$, 
\begin{align*}
 & ~ p_2(\ma_1 \otimes \One + \One \otimes \ma_1)  \\
= & ~ \big( p \ma_2 (\One \otimes h + h \otimes \ip) \big) (\ma_1 \otimes \One + \One \otimes \ma_1)  \\
= & ~ p \ma_2 (\ma_1 \ot h + \One \ot h\ma_1 + h\ma_1 \ot \ip + h \ot (\iota p \ma_1)). 
\end{align*}
Now we use \eqref{eq:h_htpy} to write $h\ma_1 = \One - \ip - \ma_1h$ and $p\ma_1=0$ to continue 
\begin{align*}
 & ~ p_2(\ma_1 \otimes \One + \One \otimes \ma_1)  \\
= & ~ p \ma_2 (\ma_1 \ot h + \One \ot (\One - \ip -\ma_1h) + (\One - \ip -\ma_1h) \ot \ip )  \\  
=  & ~ p \ma_2 (\One \ot \One) - p\ma_2(\ip \otimes \ip) \\
= & ~ p \ma_2 - \mh_2(p \otimes p)
\end{align*}
where we used that $p$ vanishes on coboundaries. 
Setting $d_2 := \ma_1 \ot \One + \One \otimes \ma_1 $, this reads 
\begin{align}\label{eq:pd2_relation}
pd_2 = p\ma_2 - \mh_2(p \ot p). 
\end{align}
%
For $n=3$, we first define 
\begin{align*}
h_3 := \One^{\ot 2} \otimes h + \One \ot h \otimes \ip + h \ot \ip^{\ot 2}
\end{align*} 
and check that 
\begin{align}\label{eq:h3_htpy_relation}
\One^{\ot 3} - \ip^{\ot 3} =h_3d_3 + d_3h_3
\end{align} 
where we write $d_3 := \One^{\ot 2} \otimes \ma_1 + \One \ot \ma_1 \ot \One + \ma_1 \ot \One^{\ot 2}$. 
Furthermore, we compute 
\begin{align*}
\nn_3d_3 = & ~ (\One \ot \ma_2 - \ma_2 \ot \One)(\One^{\ot 2} \otimes \ma_1 + \One \ot \ma_1 \ot \One + \ma_1 \ot \One^{\ot 2}) \\
= & ~ \One \ot \ma_2(\One \ot \ma_1) + \One \ot \ma_2(\ma_1 \ot \One) + \ma_1 \ot \ma_2(\One \ot \One) \\
 & ~ - \big( \ma_2(\One \ot \One) \ot \ma_1 + \ma_2(\One \ot \ma_1) \ot \One + \ma_2(\ma_1 \ot \One) \ot \One  \big) \\
= & ~ \One \ot \ma_1\ma_2(\One \ot \One) + \ma_1 \ot \ma_2(\One \ot \One) \\
& ~ - \big( \ma_2(\One \ot \One) \ot \ma_1 + \ma_1\ma_2(\One \ot \One) \ot \One  \big). 
\end{align*}
Thus, we get the relation 
\begin{align}\label{eq:n3d3_d2n3}
\nn_3d_3 = d_2\nn_3.
\end{align}
Moreover, we compute 
\begin{align*}
(p \ot p)(\nn_3 \circ h_3) = & ~ p\ma_2(\One \ot \One)\ot ph + p\ma_2(\One \ot h)\ot p\ip + p\ma_2(h \ot \ip) \ot p\ip \\
 & ~ - \big( p \ot p\ma_2(\One \ot h) + p \ot p\ma_2(h \ot \ip) + ph \ot p\ma_2(\ip \ot \ip)  \big) \\
= & ~ p\ma_2(\One \ot h)\ot p + p\ma_2(h \ot \ip) \ot p \\
 & ~ - \big( p \ot p\ma_2(\One \ot h) + p \ot p\ma_2(h \ot \ip)  \big) 
\end{align*}
where we use that $ph=0$ and $p\ip=p$. 
Using the definition of $p_2$ this shows 
\begin{align}\label{eq:pp_n3h3}
(p \ot p)(\nn_3 \circ h_3) = -(p \ot p_2 - p_2 \ot p).
\end{align}
Now we define the graded $\F$-linear map $p_3 \colon \Ah^{\otimes 3} \to H[-2]$ by 
\begin{align*}
p_3 := -p_2 \circ \nn_3 \circ h_3. 
\end{align*}
We check 
\begin{align*}
& p_3d_3 + p_2 \nn_3 =  -p_2  \nn_3  h_3 d_3 + p_2 \nn_3 \\ 
= ~ & - p_2\nn_3 \left(\One^{\ot 3} - \ip^{\ot 3} - d_3 h_3 \right) + p_2 \nn_3 ~ \text{using \eqref{eq:h3_htpy_relation}}  \\ 
= ~ &  p_2\nn_3 \ip^{\ot 3} - p_2\nn_3 + p_2\nn_3 d_3h_3 + p_2 \nn_3 
=  p_2\nn_3 \ip^{\ot 3} + p_2\nn_3 d_3h_3 \\ 
= ~ &  \mmp_3 (p^{\ot 3}) + p_2d_2\nn_3h_3 ~ \text{using the definition of}~\mmp_3 ~\text{and}~ \eqref{eq:n3d3_d2n3}\\ 
= ~ &  \mmp_3 (p^{\ot 3}) + p\ma_2\nn_3h_3  - \mh_2(p\ot p) \nn_3h_3 ~ \text{using \eqref{eq:pd2_relation}} \\ 
= ~ &  \mmp_3 (p^{\ot 3}) - \mh_2(p\ot p) \nn_3h_3 ~ \text{using associativity:} ~ \ma_2 \nn_3 = 0 \\
= ~ &  \mmp_3 (p^{\ot 3}) + \mh_2(p\ot p_2 - p_2 \ot p)  ~ \text{using ~\eqref{eq:pp_n3h3}}. 
\end{align*}
Using the definition of $d_3$ and $\nn_3$, this shows that  
\begin{equation*}\label{eq:relation_p3} 
\begin{aligned}
 & ~ p_3(\ma_1 \otimes \One^{\ot 2} + \One \ot\ma_1 \ot \One + \One^{\ot 2} \ot \ma_1) + p_2(\ma_2 \ot \One - \One \ot \ma_2)\\
= & ~ \mmp_3(p \ot p \ot p) + \mh_2(p \ot p_2 - p_2 \ot p). 
\end{aligned}
\end{equation*}
Thus, \eqref{eq:relation_f3} holds and $(p,p_2,p_3)$ defines a morphism of $A_3$-algebras $\Ah \to \Hba$.  
\end{proof}

\begin{remark}\label{rem:Merkulov_structure} 
We note that the map $\mmp_3$ defined in the proof of Theorem \ref{thm:proj_extends_to_A_3} corresponds to the map of Merkulov's explicit construction of the minimal $A_{\infty}$-model in  \cite{Merkulov}. 
With the above notation 
 the graded $\F$-linear map $m_3 \colon (\Hba)^{\otimes 3} \to \Ah[-1]$ in \cite{Merkulov} is defined  by 
\begin{align*}
m_3 =  p \circ \ma_2\left((h \circ \ma_2) \otimes \One - \One \otimes ((h \circ \ma_2)\right)(\iota^{\ot 3}) 
\end{align*}
which is the map $\mmp_3$ of  the proof of Theorem \ref{thm:proj_extends_to_A_3}. 
We can then define  graded $\F$-linear maps $f_2 = - h \circ \ma_2(\iota^{\ot 2})$ and  $f_3 = - h \circ \ma_2((h \circ \ma_2) \ot \One - \One \ot (h \circ \ma_2))(\iota^{\ot 3})$. 
This defines a morphism of $A_3$-algebras $f \colon \Hba \to \Ah$ which turns $\Hba$ into a minimal model for $\Ah$. 
\end{remark}

Recall that a differential graded algebra is called ($A_{\infty}$-) {\it formal} if  its minimal $A_{\infty}$-model can be chosen such that $m_i=0$ for all $i \ge 3$. 
We will use the following weaker notion: 

\begin{defn}\label{def:Ainfty_formal}
Let $\Ah$ be a connected differential graded algebra over $\F$ with cohomology algebra $\Hba$.  
Then $\Ah$ is called \emph{$A_3$-formal} if its minimal $A_3$-model can be chosen such that $\mh_3=0$. 
\end{defn}


\begin{remark}
It follows from Theorem \ref{thm:A_3_Kadeishvili} that 
$\Ah$ is $A_3$-formal if and only if there is a morphism of $A_3$-algebras $ f \colon \Hba \to \Ah$ which lifts the identity of $\Hba$  
where we consider $\Hba$ as an $A_3$-algebra with $\mh_1=0$ and $\mh_3=0$. 
\end{remark}


\begin{remark}\label{rem:A3_formality_morphism}
Let $\Ah$ be a connected differential graded algebra over $\F$.  %
The proof of Theorem \ref{thm:A_3_Kadeishvili} and relation \eqref{eq:relation_f3} tell us that in order to 
construct a morphism of $A_3$-algebras $ f \colon \Hba \to \Ah$ which lifts the identity of $\Hba$ 
we need to show that we can choose $f_1$ and $f_2$ such that  the map 
\begin{align*}
\Phi_3 = \ma_2(f_1 \otimes f_2 - f_2 \otimes f_1) - f_2(\mh_2 \otimes \One - \One \otimes \mh_2) 
\end{align*}
has image in the coboundaries of $\Ah$. 
Since $\F$ is a field, we can then find a graded $\F$-linear map $f_3$ such that relation \eqref{eq:relation_f3} is satisfied. 
\end{remark}


\subsection{$A_3$-formality and Massey products}
\label{sec:A3_formal_and_Massey}

In this section we show that $A_3$-formality implies the vanishing of triple Massey products in all degrees 
and that fourfold Massey products are defined whenever all neighbouring cup-products vanish. 
First we recall the definition of triple and fourfold Massey products. 

\begin{defn}\label{def:Massey_product} 
Let $\F$ be a field and let $\Ah$  be a differential graded $\F$-algebra with differential $\delta$ and cohomology algebra $\Hba$.  
For an element $a\in \Ah$ of degree $d = |a|$, we write $\bar{a} \coloneqq (-1)^{1+d}a$.  
Let $a_1, a_2,a_3$  be cohomology classes of degree $d_i \coloneqq |a_i|$ such that $a_1 \cdot a_2 = 0$ and $a_2 \cdot a_3 = 0$. 
For each $i$, we choose a cocycle $a_{i,i+1}$ which represents $a_i$ and cochains $a_{13}$ and $a_{24}$ such that 
$\dd a_{13} = \bar{a}_{12} \cdot a_{23}$ and $\dd a_{24} = \bar{a}_{23} \cdot a_{34}$. 
The set $M \coloneqq \{a_{12}, a_{23}, a_{34}, a_{13}, a_{24}\}$ is called 
a \emph{defining system} for the triple Massey product of $a_1$, $a_2$, $a_3$. 
The cochain $\bar{a}_{12}\cdot a_{24} + \bar{a}_{13}\cdot a_{34} \in \Ah^{d-1}$ is a cocycle where $d=d_1+d_2+d_3$. 
We write $\langle a_1, a_2, a_3 \rangle_M \in H^{d-1}(\Ah)$ for the corresponding  cohomology class. 
The {\it triple Massey product} $\langle a_1, a_2, a_3 \rangle$ is the set of all cohomology classes $\langle a_1, a_2, a_3 \rangle_M$ for all such defining systems $M$. 

If we are given four cohomology classes $a_1, a_2,a_3,a_4$ such that $a_1 \cdot a_2 = 0$, $a_2 \cdot a_3 = 0$ and $a_3 \cdot a_4 = 0$, 
we choose again, for each $i$, a cochain $a_{i,i+1}$ which represents $a_i$ and cochains $a_{i,i+2}$ such that 
$\dd a_{i,i+2} = \bar{a}_{i,i+1} \cdot a_{i+1,i+2}$. 
If we can choose cochains $a_{14}, a_{25}$ such that 
\begin{align*}
\delta(a_{14}) = \bar{a}_{12} \cdot a_{24} + \bar{a}_{13} \cdot a_{34} 
~ \text{and} ~ 
\delta(a_{25}) = \bar{a}_{23} \cdot a_{35} + \bar{a}_{24} \cdot a_{45}
\end{align*}
then the set $M \coloneqq \{a_{i,j} \}$ is called a \emph{defining system} for the Massey product of $a_1, a_2, a_3, a_4$.  
The cochain 
\begin{align*}
\bar{a}_{12}\cdot a_{25} + \bar{a}_{13}\cdot a_{35} + \bar{a}_{14}\cdot a_{45} \in \Ah^{d-2} ~ \text{with} ~ d=d_1+d_2+d_3+d_4
\end{align*}
 is a cocycle. 
We write $\langle a_1, a_2, a_3, a_4 \rangle_M \in H^{2(d-1)}(\Ah)$ for the corresponding  cohomology class. 
The {\it fourfold Massey product} $\langle a_1, a_2, a_3, a_4 \rangle$ is the set of all cohomology classes $\langle a_1, a_2, a_3, a_4 \rangle_M$ for all defining systems $M$. 

We say a Massey product is {\it defined} if the Massey product set is not empty, i.e., at least one defining system exists, 
and we say a Massey product {\it vanishes} if the Massey product set contains $0$, 
i.e., there is a defining system such that the corresponding cohomology class is trivial.    
\end{defn}

We now show the well-known fact (see for example  \cite[Section 3]{BMM})
that the $A_3$-structure of the minimal model 
of a differential graded algebra is closely related to triple Massey products. 

\begin{proposition}
\label{prop:Massey_product_and_minimal_model}
Let $\F$ be a field. 
Let $\Ah$ be a differential graded algebra over $\F$ with cohomology $\Hba$. 
Let $(\mh_1 = 0, \mh_2, \mh_3)$ be an $A_3$-algebra structure on $\Hba$ which turns $\Hba$ into a minimal model for $\Ah$. 
Let $a_1, a_2, a_3 \in \Hba$ be cohomology classes such that $a_1 \cdot a_2 = 0$ and $a_2 \cdot a_3 = 0$. 
Then $(-1)^{1+|a_2|}\mh_3(a_1, a_2, a_3)$ is an element in the Massey product set $\langle a_1,a_2,a_3 \rangle$. 
\end{proposition}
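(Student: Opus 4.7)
The plan is to read off the Massey product class directly from the quasi-isomorphism of $A_3$-algebras $f\colon \Hba \to \Ah$ supplied by Theorem \ref{thm:A_3_Kadeishvili}, where $\Hba$ is regarded as a minimal $A_3$-algebra and $\Ah$ as an $A_3$-algebra with $\ma_3 = 0$. Writing out the defining relation \eqref{eq:relation_f3} with $\ma_3 = 0$ and $\mh_1 = 0$ and evaluating on $(a_1, a_2, a_3)$, the hypothesis $\mh_2(a_1, a_2) = \mh_2(a_2, a_3) = 0$ kills the $f_2(\mh_2 \otimes \One - \One \otimes \mh_2)$ term, leaving
\begin{equation*}
f_1 \mh_3(a_1, a_2, a_3) = \ma_2\bigl(f_1 \otimes f_2 - f_2 \otimes f_1\bigr)(a_1, a_2, a_3) - \ma_1 f_3(a_1, a_2, a_3).
\end{equation*}

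Next I would extract a defining system from $f_1$ and $f_2$. The cocycles $a_{i,i+1} \coloneqq f_1(a_i)$ represent the classes $a_i$ because $f_1$ lands in $\ker \ma_1$ and induces the identity on cohomology. For the auxiliary cochains, relation \eqref{eq:relation_f1m2}, together with $\mh_1 = 0$ and the vanishing cup products, reduces to $\ma_1 f_2(a_i, a_{i+1}) = -f_1(a_i) \cdot f_1(a_{i+1})$. Choosing the scalars $a_{13} \coloneqq (-1)^{|a_1|} f_2(a_1, a_2)$ and $a_{24} \coloneqq (-1)^{|a_2|} f_2(a_2, a_3)$ then produces $\dd a_{13} = \bar{a}_{12} \cdot a_{23}$ and $\dd a_{24} = \bar{a}_{23} \cdot a_{34}$ in the convention $\bar{a} = (-1)^{1+|a|} a$, so $M = \{a_{12}, a_{23}, a_{34}, a_{13}, a_{24}\}$ is a valid defining system.

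The last step is to compare the two resulting cocycles. Expanding $\ma_2(f_1 \otimes f_2 - f_2 \otimes f_1)(a_1, a_2, a_3)$ with the Koszul rule, where the factor $|f_2| = -1$ contributes a sign $(-1)^{|a_1|}$ in the first summand, yields $(-1)^{|a_1|} f_1(a_1) \cdot f_2(a_2, a_3) - f_2(a_1, a_2) \cdot f_1(a_3)$. Expanding the Massey representative $\bar{a}_{12} \cdot a_{24} + \bar{a}_{13} \cdot a_{34}$ with the chosen signs gives exactly $(-1)^{1+|a_2|}$ times this expression. Since $f_1$ induces the identity on cohomology and the two cocycles differ by the coboundary $\ma_1 f_3(a_1,a_2,a_3)$ up to the scalar $(-1)^{1+|a_2|}$, the class of $\langle a_1, a_2, a_3\rangle_M$ equals $(-1)^{1+|a_2|}\mh_3(a_1,a_2,a_3)$. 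The main obstacle is purely bookkeeping: one must track the Koszul signs introduced by $|f_2| = -1$ and the shift $\bar{a} = (-1)^{1+|a|} a$ consistently through the choice of defining system, so that the two expressions really differ by the predicted factor $(-1)^{1+|a_2|}$.
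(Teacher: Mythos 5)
Your proposal is correct and follows essentially the same route as the paper: use the $A_3$-morphism $f\colon \Hba \to \Ah$ lifting the identity, apply relations \eqref{eq:relation_f1m2} and \eqref{eq:relation_f3} with $\mh_1=0$, $\ma_3=0$ and the vanishing cup products, take the defining system $a_{i,i+1}=f_1(a_i)$, $a_{13}=(-1)^{|a_1|}f_2(a_1,a_2)$, $a_{24}=(-1)^{|a_2|}f_2(a_2,a_3)$, and match signs. The only (inconsequential) slip is the sign on the $\ma_1 f_3(a_1,a_2,a_3)$ term, which relation \eqref{eq:relation_f3} puts on the other side, but since it is a coboundary this does not affect the conclusion.
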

\begin{proof}
Let $f \colon \Hba \to A$ be an $A_3$-algebra morphism such that $f_1$ lifts the identity of $\Hba$. 
%
For $\Phi_3$ as in \eqref{eq:def_of_Phi3_minimal}, the assumption $a_1 \cdot a_2 = 0$ and $a_2 \cdot a_3 = 0$ 
implies  
\begin{align*}
\Phi_3(a_1, a_2, a_3) & = 
 (-1)^{|a_1|} f_1(a_1) \cdot f_2(a_2, a_3) 
-  f_2(a_1, a_2) \cdot f_1(a_3) 
\end{align*}
where we use the sign rule and that $f_n$ is a map of degree $1-n$. 
Since the map $f_1$ picks cocycle representatives, 
we may write $a_{12} \coloneqq f_1(a_1)$, $a_{23} \coloneqq f_1(a_2)$, and $a_{34} \coloneqq f_1(a_3)$.  
By \eqref{eq:relation_f1m2}, we have $\ma_1f_2 = - \ma_2(f_1 \otimes f_1)$ 
and hence 
\[
\ma_1f_2(a_1,a_2) = - a_{12} \cdot a_{23} ~ \text{and} ~ \ma_1f_2(a_2,a_3) = - a_{23} \cdot a_{34}. 
\]
Hence we may write $a_{13} \coloneqq (-1)^{|a_1|}f_2(a_1,a_2)$ and $a_{24} \coloneqq (-1)^{|a_2|}f_2(a_2,a_3)$. 
Then we get 
\begin{align*}
(-1)^{1+|a_2|}\Phi_3(a_1, a_2, a_3) & = 
(-1)^{1+|a_2|} \left( (-1)^{|a_1|} a_{12} \cdot (-1)^{|a_2|} a_{24} 
+ (-1)^{1+|a_1|} a_{13} \cdot a_{34} \right) \\
& = \ba_{12} \cdot a_{24} 
+ \ba_{13} \cdot a_{34}. 
\end{align*}
Since $\mh_3(a_1, a_2, a_3)$ is the cohomology class represented by $\Phi_3(a_1, a_2, a_3)$, 
this shows $(-1)^{1+|a_2|}\mh_3(a_1, a_2, a_3) \in \langle a_1,a_2,a_3 \rangle$. 
\end{proof}

\begin{corollary}
\label{cor:A3_formal_triple_Massey}
Let $\F$ be a field. 
Let $\Ah$ be a differential graded algebra over $\F$. 
If $\Ah$ is $A_3$-formal, then all non-empty triple Massey product sets contain zero.  
\end{corollary}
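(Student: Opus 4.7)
The plan is to apply Proposition \ref{prop:Massey_product_and_minimal_model} directly, using the very model that witnesses $A_3$-formality. Fix cohomology classes $a_1, a_2, a_3 \in \Hba$ with $a_1 \cdot a_2 = 0$ and $a_2 \cdot a_3 = 0$, and assume that the triple Massey product set $\langle a_1, a_2, a_3 \rangle$ is non-empty, so that a defining system exists.

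By the hypothesis that $\Ah$ is $A_3$-formal, Definition \ref{def:Ainfty_formal} supplies a minimal $A_3$-model $(\Hba, \mh_1 = 0, \mh_2, \mh_3 = 0)$ together with a quasi-isomorphism of $A_3$-algebras $f \colon \Hba \to \Ah$ whose first component $f_1$ lifts the identity on $\Hba$. First I would feed this particular minimal model into Proposition \ref{prop:Massey_product_and_minimal_model}: its conclusion is that the element $(-1)^{1 + |a_2|}\mh_3(a_1, a_2, a_3)$ belongs to $\langle a_1, a_2, a_3 \rangle$. Since $\mh_3 = 0$ in this model, the element on the left is $0$, and hence $0 \in \langle a_1, a_2, a_3 \rangle$.

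There is essentially no obstacle here beyond making sure the logic of the quantifiers is correct: the Massey product set is an invariant of $\Ah$ (it does not depend on any choice of model), while the element $\mh_3(a_1, a_2, a_3)$ does depend on the chosen $A_3$-structure on $\Hba$. What the proof exploits is that to show $0$ lies in the Massey product set it suffices to produce \emph{one} defining system realising $0$, and the $A_3$-formal model provides exactly that via the explicit defining system constructed in the proof of Proposition \ref{prop:Massey_product_and_minimal_model} (namely $a_{12} = f_1(a_1)$, $a_{23} = f_1(a_2)$, $a_{34} = f_1(a_3)$, $a_{13} = (-1)^{|a_1|}f_2(a_1, a_2)$, $a_{24} = (-1)^{|a_2|}f_2(a_2, a_3)$). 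This completes the argument.
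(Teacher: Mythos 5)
Your argument is correct and is essentially the paper's own proof: apply Proposition \ref{prop:Massey_product_and_minimal_model} to the minimal $A_3$-model furnished by $A_3$-formality, where $\mh_3=0$, so the Massey product set contains $0$. The additional remarks about quantifiers and the explicit defining system are fine but not needed beyond what the proposition already delivers.
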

\begin{proof}
Let $a_1, a_2, a_3 \in \Hba$ be classes for which the Massey product is defined. 
By Proposition \ref{prop:Massey_product_and_minimal_model}, $(-1)^{|a_2|}\mh_3(a_1, a_2, a_3)$ is an element in $\langle a_1,a_2,a_3 \rangle$.  
If $\Ah$ is $A_3$-formal, we have $\mh_3=0$ which proves the assertion. 
\end{proof}

We note that by \cite[Theorem 3.4]{Isaksen} it is not true that for the fourfold Massey product $\langle a_1,a_2,a_3,a_4 \rangle$ to be defined 
it is not sufficient that the triple Massey products $\langle a_1,a_2,a_3\rangle$ and $\langle a_2,a_3,a_4 \rangle$ vanish. 
There is no obstruction, however, for an $A_3$-formal differential graded algebra as the following result shows. 


\begin{proposition}\label{prop:4tuple_products_are_defined}
Let $\F$ be a field. 
Let $\Ah$ be a differential graded algebra over $\F$. 
Let $a_1, a_2, a_3, a_4 \in \Hba$ be classes such that the neighbouring cup-products vanish, i.e., $a_1 \cdot a_2 = 0$, $a_2 \cdot a_3 = 0$, and $a_3 \cdot a_4 = 0$.
If $\Ah$ is $A_3$-formal, then the fourfold Massey product $\langle a_1,a_2,a_3,a_4 \rangle$ is defined. 
\end{proposition}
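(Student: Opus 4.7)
The plan is to construct a defining system directly from an $A_3$-algebra morphism $f = (f_1, f_2, f_3) \colon \Hba \to \Ah$ that lifts the identity on $\Hba$ and that has $\mh_3 = 0$. Such a morphism exists by Remark \ref{rem:A3_formality_morphism} together with the definition of $A_3$-formality. I would simply exhibit the cochains $a_{i,i+1}$, $a_{i,i+2}$ from the values of $f_1$ and $f_2$ (exactly as in the proof of Proposition \ref{prop:Massey_product_and_minimal_model}), and then use $f_3$ to manufacture $a_{14}$ and $a_{25}$.

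Concretely, following the conventions of the proof of Proposition \ref{prop:Massey_product_and_minimal_model}, I would set
\[
a_{i,i+1} \coloneqq f_1(a_i), \qquad
a_{13} \coloneqq (-1)^{|a_1|} f_2(a_1, a_2), \qquad
a_{24} \coloneqq (-1)^{|a_2|} f_2(a_2, a_3),
\]
and $a_{35} \coloneqq (-1)^{|a_3|} f_2(a_3, a_4)$. Then relation \eqref{eq:relation_f1m2} together with the vanishing of the neighbouring cup-products gives $\delta a_{i,i+2} = \bar a_{i,i+1} \cdot a_{i+1,i+2}$ as required. This much of the defining system exists for any $A_3$-morphism $f$; the point of $A_3$-formality is to upgrade it so that $a_{14}$ and $a_{25}$ also exist.

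For this last step, I would plug $a_1 \otimes a_2 \otimes a_3$ into relation \eqref{eq:relation_f3}. Since $\Ah$ is an ordinary dga we have $\ma_3 = 0$; since $\Hba$ is minimal we have $\mh_1 = 0$; by $A_3$-formality we have $\mh_3 = 0$; and the hypothesis on cup-products makes $\mh_2(a_1, a_2) = \mh_2(a_2, a_3) = 0$. Only two terms of \eqref{eq:relation_f3} survive, giving
\[
\delta f_3(a_1, a_2, a_3) = - \ma_2\bigl(f_1 \otimes f_2 - f_2 \otimes f_1\bigr)(a_1 \otimes a_2 \otimes a_3) = -\Phi_3(a_1,a_2,a_3),
\]
with $\Phi_3$ as in \eqref{eq:def_of_Phi3_minimal}. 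The sign computation in the proof of Proposition \ref{prop:Massey_product_and_minimal_model} then shows $(-1)^{1+|a_2|}\Phi_3(a_1,a_2,a_3) = \bar a_{12}\cdot a_{24} + \bar a_{13}\cdot a_{34}$. Setting
\[
a_{14} \coloneqq (-1)^{|a_2|} f_3(a_1, a_2, a_3), \qquad a_{25} \coloneqq (-1)^{|a_3|} f_3(a_2, a_3, a_4),
\]
therefore yields $\delta a_{14} = \bar a_{12}\cdot a_{24} + \bar a_{13}\cdot a_{34}$ and, by the same argument, $\delta a_{25} = \bar a_{23}\cdot a_{35} + \bar a_{24}\cdot a_{45}$. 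This completes a defining system for $\langle a_1, a_2, a_3, a_4\rangle$, so the fourfold Massey product is defined.

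The main obstacle is purely bookkeeping: one has to be careful with the grading-sign convention $(f\otimes g)(x\otimes y) = (-1)^{|g||x|}f(x)\otimes g(y)$ when evaluating the $f_1\otimes f_2$ and $f_2\otimes f_1$ terms (remembering $|f_2| = -1$ and $|f_3| = -2$), so that the scalars coming out of \eqref{eq:relation_f3} match the bar-sign convention $\bar a = (-1)^{1+|a|}a$ in Definition \ref{def:Massey_product}. Apart from this sign check, everything is a direct unwinding of the $A_3$-algebra axioms.
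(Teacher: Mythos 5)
Your proposal is correct and follows essentially the same route as the paper's proof: the paper also builds the defining system from $f_1$ and $f_2$ exactly as in Proposition \ref{prop:Massey_product_and_minimal_model}, uses $\mh_3=0$ to conclude that $\ba_{12}\cdot a_{24}+\ba_{13}\cdot a_{34}$ and $\ba_{23}\cdot a_{35}+\ba_{24}\cdot a_{45}$ are coboundaries, and notes that one may take $a_{14}=\pm f_3(a_1,a_2,a_3)$ and $a_{25}=\pm f_3(a_2,a_3,a_4)$. Your only addition is to spell out, via relation \eqref{eq:relation_f3}, why $\delta f_3(a_1,a_2,a_3)=-\Phi_3(a_1,a_2,a_3)$, which the paper leaves implicit.
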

\begin{proof}
As in the proof of Proposition \ref{prop:Massey_product_and_minimal_model}, 
the existence of the map $f_2$ allows us to choose elements 
$a_{13}:=(-1)^{|a_1|}f_2(a_1,a_2)$, $a_{24}:=(-1)^{|a_2|}f_2(a_2,a_3)$ and $a_{35}:=(-1)^{|a_3|}f_2(a_3,a_4)$ in $\Ah$ such that 
$(-1)^{|a_2|}\mh_3(a_1, a_2, a_3) = \ba_{12} \cdot a_{24} + \ba_{13} \cdot a_{34}$ 
and $(-1)^{|a_3|}\mh_3(a_2, a_3, a_4) = \ba_{23} \cdot a_{35} + \ba_{24} \cdot a_{45}$. 
Since $\Ah$ is $A_3$-formal, we have $\mh_3=0$. 
This implies that we can find cochains $a_{14}$ and $a_{25}$ such that 
$\delta(a_{14}) = \ba_{12} \cdot a_{24} + \ba_{13} \cdot a_{34}$ 
and 
$\delta(a_{25}) = \ba_{23} \cdot a_{35} + \ba_{24} \cdot a_{45}$. 
In fact, we can choose $a_{14} = \pm f_3(a_1,a_2,a_3)$ and $a_{25} = \pm f_3(a_2,,a_3,a_4)$. 
Hence set of elements above the diagonal in the matrix 
\begin{align*}
\begin{pmatrix}
 1 & a_{12} & a_{13} & a_{14} &  \\
  & 1& a_{23} & a_{24} & a_{25} \\
  & & 1 & a_{34} & a_{35} \\
  & & & 1 & a_{45}\\
  & & & & 1
\end{pmatrix}
\end{align*} 
is a defining system for the fourfold Massey product $\langle a_1,a_2,a_3,a_4 \rangle$. 
\end{proof}


It follows from Proposition \ref{prop:4tuple_products_are_defined} that the question whether fourfold Massey products are defined 
provides an obstruction for $A_3$-formality. 
In Example \ref{example:A3_formal_>_3Massey_vanishing} we use this obstruction to construct a simple differential graded algebra 
for which all triple Massey products of degree one classes vanish, but which is not $A_3$-formal. 
See Theorem \ref{thm:Demushkin_p_equal_3}, Remark \ref{rem:realizable_Demushkin_group} and the introduction 
for more interesting examples.

\begin{example}\label{example:A3_formal_>_3Massey_vanishing}
Similar to \cite[Section 4]{Isaksen}, we can construct a differential graded algebra for which all triple Massey products vanish, 
but which is not $A_3$-formal, as follows. 
Let $S$ be the set $\{a_{12}, a_{23}, a_{34}, a_{45}, a_{13}, a_{24}, a'_{24}, a_{35}, a_{14}, a'_{25}\}$, 
and let $V$ be the free $\F$-vector space on the set $S$. 
Let $\Ah$ be the differential graded algebra over $\F$ whose underlying algebra 
is the graded tensor algebra $T(V)$ with $V$ in degree one and differential the unique derivation defined on generators as listed in the following table: 
\begin{center}
\adjustbox{scale=0.8}{
\begin{tabular}{c|c|c|c|c|c|c|c|c|c|c} 
$x$ & $a_{12}$ & $a_{23}$ & $a_{34}$ & $a_{45}$ & $a_{13}$ & $a_{24}$ & $a'_{24}$ & $a_{35}$ & $a_{14}$ & $a'_{25}$  \\ 
\hline
$\delta(x)$ & $0$ & $0$ & $0$ & $0$ & $\ba_{12}a_{23}$ & $\ba_{23}a_{34}$ & $\ba_{23}a_{34}$ & $\ba_{34}a_{45}$ & $\ba_{12} a_{24} + \ba_{13} a_{34}$ & $\ba_{23} a_{35} + \ba'_{24} a_{45}$ 
\end{tabular}}
\end{center}
where we omit the tensor sign from the notation. 
For $1 \le i \le 4$, we let $a_i \in H^1(\Ah)$ denote the cohomology class of $a_{i,i+1}$. 
%
There are exactly two non-empty triple Massey product sets $\langle a_1,a_2,a_3 \rangle$ and $\langle a_2,a_3,a_4 \rangle$ 
of elements in degree one. 
Moreover, both Massey product sets contain zero since $\ba_{12} a_{24} + \ba_{13} a_{34}$, which represents an element in $\langle a_1,a_2,a_3 \rangle$, 
and $\ba_{23} a_{35} + \ba'_{24} a_{45}$, which represents an element in $\langle a_2,a_3,a_4 \rangle$, are coboundaries. 
%
However, the fourfold Massey product $\langle a_1,a_2,a_3,a_4 \rangle$ is not defined, 
since we cannot choose a defining system such that both triple Massey products vanish simultaneously. 
By Proposition \ref{prop:4tuple_products_are_defined}, this implies that $\Ah$ is not $A_3$-formal. 
\end{example}


\section{$A_3$-formality and Hochschild cohomology}\label{sec:A3_and_HH}

In this section, we recall graded Hochschild cohomology groups of graded algebras 
and will then introduce the canonical class of Benson--Krause--Schwede in Hochschild cohomology. 
We then show that a differential graded algebra $\Ah$ over a field is $A_3$-formal if and only if 
the canonical class of $\Ah$ is trivial. 

\subsection{Graded Hochschild cohomology}

Let $A = \oplus_{i \ge 0} A^i$ be a non-negatively graded $\F$-algebra with $A^0=\F$. 
Let $M$ be a graded $A$-bimodule. 
For $s\in \Z$, we write $M[s]$ for the graded $A$-bimodule given in degree $n$ by $M[s]_n = M_{n+s}$. 
We recall from \cite[Section 4]{BKS} that 
the graded Hochschild cochain complex $C^{\bbb,*}(A,M)$ of $A$ with coeﬃcients in $M$ 
is defined by 
\begin{align}\label{eq:def_graded_complex_HH}
C^{n,s}(A,M) = \uHom_{\F}(A^{\ot n}, M[s]). 
\end{align}
A Hochschild cochain in degree $(n,s)$ with coefficients in $M$ is thus given by a multilinear function 
from $n$-tuples of elements of $A$ to $M$ which raise the degree by $s$. 
The differential $\dee^n \colon \uHom_{\F}(A^{\otimes n}, M[s]) \to \uHom_{\F}(A^{\otimes (n+1)}, M[s])$, is defined by 
\begin{align*}
\dee^n(f)(a_1, \ldots, a_{n+1}) & = (-1)^{s|a_1|}a_1f(a_2, \ldots, a_{n+1}) \\
& + \sum_{i=1}^n (-1)^i f(a_1, \ldots, a_ia_{i+1}, \ldots, a_{n+1}) \\
& + (-1)^{n+1} f(a_1, \ldots, a_n)a_{n+1}.
\end{align*}
We note that the differential only changes the first grading denoted by $\bbb$. 
\begin{defn}
The Hochschild cohomology group $\HH^{n,s}(A,M)$ is the $n$th cohomology group of $C^{\bbb,s}(A,M)$, i.e., 
\begin{align*}
\HH^{n,s}(A,M) = H^n(C^{\bbb,s}(A,M)). 
\end{align*}
We write $\HH^{n,s}(A)$ for $\HH^{n,s}(A,A)$. 
\end{defn}


Now let $\Ah$ be a differential graded algebra over $\F$ with cohomology $\Hba$. 
We choose an $\F$-linear graded map $f_1 \colon \Hba \to \ker \ma_1$ which induces the identity on $\Hba$.   
We then choose an $\F$-linear graded map 
$f_2 \colon \Hba \otimes \Hba \to A$ of degree $-1$ satisfying  
\begin{align*}
\ma_1f_2 = f_1\mh_2  - \ma_2(f_1 \otimes f_1). 
\end{align*}
We define the graded $\F$-linear map $\Phi_3 \colon \Hba^{\otimes 3} \to A[-1]$ by 
\begin{align}\label{eq:def_of_Phi3_construction}
\Phi_3 = \ma_2(f_1 \otimes f_2 - f_2 \otimes f_1) - f_2(\mh_2 \otimes \One - \One \otimes \mh_2). 
\end{align}
For all $x, y, z \in \Hba$, $\Phi_3(x, y, z)$ is a cocycle in $\Ah$. 
We let $m_3$ denote the  graded $\F$-linear map $\Hba^{\otimes 3} \to \Hba[-1]$ induced by $\Phi_3$. 
We consider $\mmm_3$ as a cochain in the Hochschild complex $C^3(\Hba, \Hba[-1])$. 

\begin{lemma}\label{lemma:g3_is_a_cocycle}
The cochain $m_3$ in $C^3(\Hba, \Hba[-1])$ is a cocycle with respect to the Hochschild differential $\dee^3 \colon C^3(\Hba, \Hba[-1]) \to C^4(\Hba, \Hba[-1])$. 
In particular, $m_3$ defines a class in $\HH^{3,-1}(\Hba,\Hba)$. 
\end{lemma}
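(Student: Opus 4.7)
The plan is to show that $\dee^3 m_3$ vanishes identically as a cochain in $C^4(\Hba, \Hba[-1])$. Since $f_1 \colon \Hba \to \ker \ma_1 \subset \Ah$ is injective on cohomology classes and $\Hba$ has trivial differential, it suffices to exhibit $f_1 \circ \dee^3 m_3$ as taking values in coboundaries of $\Ah$. First I would expand $f_1(\dee^3 m_3(a,b,c,d))$ using the two identities underlying the construction of $m_3$ in the proof of Theorem \ref{thm:A_3_Kadeishvili}, namely $f_1 m_3 = \Phi_3 + \ma_1 f_3$ together with $\ma_1 f_2 = f_1 \mh_2 - \ma_2(f_1 \otimes f_1)$. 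Because $f_1(a)$ and $f_1(d)$ are cocycles, the graded Leibniz rule absorbs every summand containing $\ma_1 f_3$ as an $\Ah$-coboundary. What remains, modulo coboundaries, is the ``bar differential'' of $\Phi_3$ applied to $(a,b,c,d)$:
\begin{align*}
f_1 \dee^3 m_3(a,b,c,d) \equiv ~ & (-1)^{|a|} f_1(a) \cdot \Phi_3(b,c,d) - \Phi_3(ab, c, d) \\
& + \Phi_3(a, bc, d) - \Phi_3(a, b, cd) + \Phi_3(a,b,c) \cdot f_1(d).
\end{align*}

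Next I would substitute the definition \eqref{eq:def_of_Phi3_construction} into each of the five terms and expand. Of the twenty summands so produced, sixteen cancel pairwise using only the associativity of $\mh_2$ in $\Hba$ and of $\ma_2$ in $\Ah$; the four survivors collect into
\begin{align*}
(-1)^{|a|+|b|} [f_1(a) f_1(b) - f_1(ab)] \cdot f_2(c,d) + f_2(a,b) \cdot [f_1(cd) - f_1(c) f_1(d)].
\end{align*}
Applying $\ma_1 f_2 = f_1 \mh_2 - \ma_2(f_1 \otimes f_1)$ to each bracket and then the graded Leibniz rule for $\ma_1$ acting on $f_2(a,b) \cdot f_2(c,d)$, this expression collapses to $-(-1)^{|a|+|b|} \ma_1(f_2(a,b) \cdot f_2(c,d))$, which is manifestly a coboundary. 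Hence $f_1 \dee^3 m_3$ lands in coboundaries of $\Ah$, so $\dee^3 m_3 = 0$ in $\Hba$, and $m_3$ represents a class in $\HH^{3,-1}(\Hba, \Hba)$.

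The main obstacle in executing this plan is the sign bookkeeping under the Koszul convention; the conceptual content is simply that the ``associator'' $\Phi_3$ of $f_1$ is itself associative modulo coboundaries, a fact forced by the strict associativity of $\ma_2$.
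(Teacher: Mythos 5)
Your proposal is correct and follows essentially the same route as the paper: the paper's proof is precisely the computation that the Hochschild (bar) differential of $\Phi_3$ equals $\ma_1(\ma_2(f_2 \otimes f_2))$, hence is a coboundary in $\Ah$, so $m_3$ is a Hochschild cocycle; your eight pairwise cancellations and the collapse of the four surviving terms via $\ma_1 f_2 = f_1\mh_2 - \ma_2(f_1\otimes f_1)$ and the Leibniz rule reproduce exactly that identity. The only difference is that you pass explicitly through $f_1 m_3 = \Phi_3 + \ma_1 f_3$ and absorb the resulting coboundary terms, which the paper handles implicitly by working with cohomology classes.
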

\begin{proof}
We compute $\dee^3\Phi_3 = \ma_1(\ma_2(f_2 \otimes f_2))$. 
Thus, the cohomology class $\dee^3\Phi_3$ is zero, and $\mmm_3$  is a cocycle in the Hochschild complex $C^3(\Hba, \Hba[-1])$. 
\end{proof}



The next result may be found in \cite[Proposition 5.4]{BKS} (see also Proposition \ref{prop:kappa3_functorial_Koszul} below):

\begin{proposition}\label{prop:m3_functorial}
Let $\varphi \colon \Ah \to \Bh$ be a morphism of connected differential graded $\F$-algebras. 
Consider $\Hbb$ as an $\Hba$-bimodule via the induced morphism $\varphi_{\bbb} \colon \Hba \to \Hbb$. 
Let $(f^{\Ah}_1, f^{\Ah}_2)$ and $(f^{\Bh}_1, f^{\Bh}_2)$ be choices of graded $\F$-linear maps as above for $A$ and $B$, respectively. 
Let $\ma_3$ and $\mb_3$ denote the induced maps for $\Hba$ and $\Hbb$, respectively, defined using formula \eqref{eq:def_of_Phi3_construction}. 
Then the Hochschild cocycles $\varphi_{\bbb} \circ \ma_3$ and $\mb_3 \circ (\varphi_{\bbb})^{\ot 3}$ are cohomologous 
in the complex $C^{\bbb}(\Hba, \Hbb[-1])$. \qed
\end{proposition}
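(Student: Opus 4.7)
The plan is to construct an $A_3$-morphism $\tilde{\varphi}$ between the minimal $A_3$-algebras $(\Hba, 0, \mh_2, \ma_3)$ and $(\Hbb, 0, \mh_2, \mb_3)$ with first component $\tilde{\varphi}_1 = \varphi_\bbb$. Since the internal differentials $\mh_1$ on $\Hba$ and $\Hbb$ both vanish, relation \eqref{eq:relation_f3} for $\tilde{\varphi}$ collapses to the identity $\varphi_\bbb \circ \ma_3 - \mb_3 \circ \varphi_\bbb^{\ot 3} = \dee \tilde{\varphi}_2$ of Hochschild cochains, which is exactly the content of the proposition; the third component $\tilde{\varphi}_3$ does not enter.

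First, since both $\varphi \circ f^{\Ah}_1$ and $f^{\Bh}_1 \circ \varphi_\bbb$ are $\F$-linear graded maps $\Hba \to \Bh$ of degree $0$ taking values in $\ker \mb_1$ and inducing $\varphi_\bbb$ on cohomology, there exists an $\F$-linear graded map $u_1 \colon \Hba \to \Bh$ of degree $-1$ with $\varphi \circ f^{\Ah}_1 - f^{\Bh}_1 \circ \varphi_\bbb = \mb_1 \circ u_1$. Using $u_1$ I would introduce the candidate cochain-level lift of $\tilde{\varphi}_2$, namely the graded $\F$-linear map $\tilde{X} \colon \Hba^{\ot 2} \to \Bh$ of degree $-1$ defined by
\[
\tilde{X} := \varphi \circ f^{\Ah}_2 - f^{\Bh}_2 \circ \varphi_\bbb^{\ot 2} - u_1 \circ \mh_2 + \mb_2 \circ (u_1 \ot \varphi f^{\Ah}_1) + \mb_2 \circ (f^{\Bh}_1 \varphi_\bbb \ot u_1).
\]
Using relation \eqref{eq:f2_condition_in_minimal_thm} applied to both $f^{\Ah}_2$ and $f^{\Bh}_2$, the fact that $\varphi$ is a DGA morphism (so $\varphi \ma_2 = \mb_2(\varphi \ot \varphi)$ and $\mh_2 \circ \varphi_\bbb^{\ot 2} = \varphi_\bbb \circ \mh_2$), and the Leibniz rule $\mb_1 \mb_2 = \mb_2(\mb_1 \ot \One + \One \ot \mb_1)$, a direct computation gives $\mb_1 \tilde{X} = 0$. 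Hence $\tilde{X}$ descends, by post-composition with the projection $\ker \mb_1 \onto \Hbb$, to a graded map $\tilde{\varphi}_2 \colon \Hba^{\ot 2} \to \Hbb[-1]$, which I view as a Hochschild 2-cochain in $C^{2,-1}(\Hba, \Hbb)$ with the bimodule structure on $\Hbb$ induced by $\varphi_\bbb$.

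The heart of the proof is then to verify $\dee \tilde{\varphi}_2 = \varphi_\bbb \circ \ma_3 - \mb_3 \circ \varphi_\bbb^{\ot 3}$. I would do this by choosing $\Bh$-valued representatives for each of the four terms of $\dee \tilde{\varphi}_2$ and expanding $\tilde{X}$. Grouping the resulting monomials, those depending only on $f^{\Ah}_1$ and $f^{\Ah}_2$ recombine into $\varphi \circ \Phi_3^{\Ah}$, those depending only on $f^{\Bh}_1 \circ \varphi_\bbb$ and $f^{\Bh}_2 \circ \varphi_\bbb^{\ot 2}$ recombine into $-\Phi_3^{\Bh} \circ \varphi_\bbb^{\ot 3}$, and the remaining cross-terms sum to an $\mb_1$-boundary via the Leibniz rule combined with the substitutions $\mb_1 u_1 = \varphi f^{\Ah}_1 - f^{\Bh}_1 \varphi_\bbb$ and the two instances of \eqref{eq:f2_condition_in_minimal_thm} for $\mb_1(\varphi f^{\Ah}_2)$ and $\mb_1(f^{\Bh}_2 \varphi_\bbb^{\ot 2})$. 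Passing to cohomology in $\Hbb$ then yields the desired equation in $C^{3,-1}(\Hba, \Hbb)$.

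The main obstacle is the sign bookkeeping in this last step: the Koszul convention for tensor products of graded maps combined with the degree $-1$ shift appearing in the Hochschild differential produces a large number of signs, and correctly identifying the $\mb_1$-boundary pattern after expansion is the delicate part of the argument. Conceptually, however, the identity is essentially forced by the fact that $(\varphi_\bbb, \tilde{\varphi}_2)$ extends to an $A_3$-morphism between the minimal $A_3$-algebras on $\Hba$ and $\Hbb$, so no genuinely new ingredient beyond the Leibniz rule and the defining equations for $f^{\Ah}_\bullet$, $f^{\Bh}_\bullet$, and $u_1$ is required.
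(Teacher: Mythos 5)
Your argument is correct and is essentially the same proof the paper relies on: the paper itself only cites \cite[Proposition 5.4]{BKS} for this statement, and your homotopy $u_1$ and correction cochain $\tilde{X}$ are exactly the analogues of the maps $g$ and $t$ appearing in that argument (and, in restricted form, in the paper's own proof of Proposition \ref{prop:kappa3_functorial_Koszul}, cf.\ Remark \ref{rem:cochain_on_Koszul_is_the_restriction}), with the extra term $u_1\circ \mh_2$ correctly included since one is not restricting to the Koszul complex. The only part you defer, the sign bookkeeping in expanding $\dee\tilde{\varphi}_2$, is routine and does not hide any missing idea.
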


Applying Proposition \ref{prop:m3_functorial} with $A=B$ and $\varphi$ being the identity yields the following result (see also \cite[Corollary 5.7]{BKS}):

\begin{corollary}\label{cor:independence_identity}
The class $[m_3] \in \HH^{3,-1}(\Hba, \Hba)$ only depends on the differential graded algebra $\Ah$ and not the choice of the pair $(f_1,f_2)$. \qed 
\end{corollary}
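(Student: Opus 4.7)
The plan is to deduce the independence statement directly from Proposition \ref{prop:m3_functorial} by specialising to a trivial morphism. Suppose we have two choices of graded $\F$-linear maps, say $(f_1, f_2)$ and $(f_1', f_2')$, each satisfying the conditions imposed before \eqref{eq:def_of_Phi3_construction}. Via formula \eqref{eq:def_of_Phi3_construction}, these produce two Hochschild $3$-cocycles $m_3, m_3' \in C^3(\Hba, \Hba[-1])$ (both well-defined by Lemma \ref{lemma:g3_is_a_cocycle}). Our goal is to show $[m_3] = [m_3']$ in $\HH^{3,-1}(\Hba, \Hba)$.

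To achieve this, I would apply Proposition \ref{prop:m3_functorial} in the special case $\Ah = \Bh$ and $\varphi = \id_{\Ah}$. This is legitimate since $\id_{\Ah}$ is a morphism of connected differential graded $\F$-algebras, and the $\Hba$-bimodule structure on $\Hbb = \Hba$ induced by $\varphi_{\bbb} = \id_{\Hba}$ is simply the standard one. We take the first pair of auxiliary maps on the ``source'' copy of $\Ah$ to be $(f_1^{\Ah}, f_2^{\Ah}) = (f_1, f_2)$ and the second pair on the ``target'' copy to be $(f_1^{\Bh}, f_2^{\Bh}) = (f_1', f_2')$. The proposition then yields that $\varphi_{\bbb} \circ m_3 = m_3$ and $m_3' \circ (\varphi_{\bbb})^{\ot 3} = m_3'$ represent the same class in $H^3(C^{\bbb}(\Hba, \Hba[-1])) = \HH^{3,-1}(\Hba, \Hba)$.

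Since any two admissible choices produce cohomologous cocycles, the class $[m_3]$ is a well-defined invariant of $\Ah$ alone. There is no real obstacle here: the substantive work has already been done in Proposition \ref{prop:m3_functorial}, which handles the more general situation of a morphism between two distinct differential graded algebras. The only thing to verify when invoking it is that the identity map qualifies as the required morphism $\varphi$ and that the induced bimodule action on $\Hba$ agrees with the standard one, both of which are immediate. This completes the reduction and hence the proof.
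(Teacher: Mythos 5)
Your proposal is correct and matches the paper's argument: the paper likewise obtains the corollary by applying Proposition \ref{prop:m3_functorial} with $\Ah=\Bh$, $\varphi=\id$, and the two different choices of auxiliary pairs playing the roles of $(f^{\Ah}_1,f^{\Ah}_2)$ and $(f^{\Bh}_1,f^{\Bh}_2)$. Nothing further is needed.
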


\begin{remark}
We could also deduce the independence of the class $[m_3]$ from Theorem \ref{thm:A_3_Kadeishvili} as follows. 
Let $\tm_3 \colon \Hba^{\otimes 3} \to \Hba[-1]$ 
be another map which defines an $A_3$-algebra structure which turns $\Hba$ into a minimal model of $A$. 
By Theorem \ref{thm:A_3_Kadeishvili}, there is an isomorphism of $A_3$-algebras $g_\bbb \colon \Hba \to \Hba$ such that $g_1=\id_{\Hba}$ and \eqref{eq:relation_f3} holds, 
i.e., 
\begin{equation*}
\begin{aligned}
\tm_3 (g_1 \otimes g_1 \otimes g_1) - g_1  m_3 
=  g_2 (\mh_2 \otimes \One - \One \otimes \mh_2) - \mh_2(g_1 \otimes g_2 - g_2 \otimes g_1). 
\end{aligned} 
\end{equation*}
This shows that the difference of $m_3$ and $\tm_3$ in the group $C^3(\Hba,\Hba[-1])$ is 
the coboundary $\dee^2 (g_2)$.  
\end{remark}

Following Benson--Krause--Schwede in \cite[page 3623]{BKS} we use the following terminology: 

\begin{defn}\label{def:canonical_class}
We denote the class $[m_3] \in \HH^{3,-1}(\Hba)$ by $\ga$ and call it the \emph{canonical class} of $\Ah$. 
\end{defn}

\begin{remark}\label{rem:sign_difference_to BKS_canonical_class}
We note that our construction of the canonical class differs by a sign from \cite[Construction 5.1]{BKS}. 
\end{remark}

\begin{remark}\label{rem:canonical_class_functorial}
As pointed out in \cite[Corollary 5.7]{BKS}, Proposition \ref{prop:m3_functorial} implies that the canonical class satisfies the following functoriality: 
With the notation and assumption of Proposition \ref{prop:m3_functorial}, 
the images of canonical classes are related by the formula 
\begin{align*}
\varphi_\bbb(\gamma_A) = \varphi^{\bbb}(\gamma_B) ~ \text{in} ~ \HH^{3,-1}(\Hba,\Hbb)
\end{align*}
under the induced maps 
\begin{align*}
\HH^{3,-1}(\Hba,\Hba) \xto{\varphi_\bbb} \HH^{3,-1}(\Hba,\Hbb) \xleftarrow{\varphi^\bbb} \HH^{3,-1}(\Hbb,\Hbb).
\end{align*} 
\end{remark}


\subsection{Kadeishvili's criterion for $A_3$-formality}

The following result is a modified version of Kadeishvili's theorem \cite{Kadeishvili88} (see also \cite[Theorem 4.7]{ST}). 

\begin{theorem}[Kadeishvili]\label{thm:existence_of_obstruction_class}
Let $\Ah$ be a connected differential graded algebra over $\F$ with cohomology algebra $\Hba$. 
Then $\Ah$ is $A_3$-formal if and only if the canonical class $\ga \in \HH^{3,-1}(\Hba)$ vanishes.  
\end{theorem}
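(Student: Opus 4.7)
My plan is to prove both directions by exploiting the freedom in the construction of $\ga$, together with the $A_3$-morphism axioms from Section \ref{sec:A3-algebras}.

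For the forward direction, suppose $\Ah$ is $A_3$-formal. By Definition \ref{def:Ainfty_formal} we may equip $\Hba$ with an $A_3$-structure having $\mh_3 = 0$ together with an $A_3$-morphism $g = (g_1, g_2, g_3) \colon \Hba \to \Ah$ lifting the identity. The first step is to observe that $(g_1, g_2)$ is an admissible pair for the construction of $\ga$ in Section \ref{sec:A3_and_HH}: since $\mh_1 = 0$, the morphism axiom $\ma_1 g_1 = g_1 \mh_1$ places $g_1$ in $\ker \ma_1$, and relation \eqref{eq:relation_f1m2} collapses to $\ma_1 g_2 = g_1 \mh_2 - \ma_2(g_1 \otimes g_1)$, which is precisely the condition imposed on $f_2$. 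Substituting $f_1 = g_1$, $f_2 = g_2$ into \eqref{eq:def_of_Phi3_construction} and comparing with relation \eqref{eq:relation_f3} applied with $\ma_3 = 0 = \mh_3$ and $\mh_1 = 0$, one reads off $\Phi_3 = -\ma_1 g_3$. Hence $\Phi_3$ lands in coboundaries of $\Ah$, its projection to $\Hba$ vanishes identically, and $\ga = 0$.

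For the converse, assume $\ga = 0$. Pick any admissible pair $(f_1, f_2)$ and let $m_3 \in C^3(\Hba, \Hba[-1])$ be the resulting Hochschild cocycle. By hypothesis there exists $g_2 \in C^2(\Hba, \Hba[-1])$ with $m_3 = \dee^2(g_2)$ up to a sign that I would track carefully. The crucial construction is to modify the homotopy to $f'_2 \coloneqq f_2 + f_1 \circ g_2$; since $\ma_1 f_1 = 0$, this preserves the required identity $\ma_1 f'_2 = f_1 \mh_2 - \ma_2(f_1 \otimes f_1)$, so $(f_1, f'_2)$ is again admissible. The heart of the argument is a direct computation of the new $\Phi'_3$: substituting into \eqref{eq:def_of_Phi3_construction} and using $\ma_2(f_1 \otimes f_1) = f_1 \mh_2 - \ma_1 f_2$ to rewrite the mixed terms $\ma_2(f_1 \otimes f_1 g_2)$ and $\ma_2(f_1 g_2 \otimes f_1)$, the difference $\Phi'_3 - \Phi_3$ collapses to $f_1 \circ \dee^2(g_2)$ modulo an $\ma_1$-coboundary. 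Projecting to $\Hba$ then yields $m'_3 = m_3 - \dee^2(g_2) = 0$ in the Hochschild cochain complex, and the resulting minimal $A_3$-model has $\mh_3 = 0$, so $\Ah$ is $A_3$-formal.

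The main obstacle is the explicit computation in the converse: one has to recognise that the Hochschild coboundary operator on $C^{\bbb,-1}(\Hba, \Hba)$ emerges naturally from modifying $f_2$ by an $f_1$-lift of a degree-$(-1)$ Hochschild $2$-cochain, and one must keep the graded tensor signs from Section \ref{sec:A3-algebras} straight throughout. Once this identification is in place, the argument reduces to elementary bookkeeping together with Corollary \ref{cor:independence_identity}, which guarantees that the new admissible pair $(f_1, f'_2)$ computes the same canonical class $\ga$.
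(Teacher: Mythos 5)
Your proof is correct and takes essentially the same route as the paper: your replacement $f'_2 = f_2 \pm f_1\circ g_2$ is exactly the paper's modification $\tilde{f}_2 = f_2 - \eta$ with $\eta$ a $\ker\ma_1$-valued lift of a Hochschild primitive of $m_3$, and your forward direction is the same reading of relation \eqref{eq:relation_f3} that underlies the paper's one-line argument. The only point to settle is the sign you flagged: with $f'_2 = f_2 + f_1\circ g_2$ one gets $m'_3 = m_3 + \dee^2(g_2)$, so choose $g_2$ with $\dee^2(g_2) = -m_3$ (or subtract $f_1\circ g_2$, as the paper does).
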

\begin{proof}
If $\Ah$ is $A_3$-formal, then $\mh_3$ is trivial and the class of $\mh_3$ vanishes in $\HH^{3,-1}(\Hba)$. 
%
Now we assume that $[\mmm_3]=0$  in $\HH^{3,-1}(\Hba)$. 
We may assume that $\Phi_3$ and hence $\mmm_3$ is constructed using maps $f_1,f_2$ as in \eqref{eq:def_of_Phi3_minimal}. 
Then there exists a map $\eta \colon \Hba^{\otimes 2} \to (\ker \ma_1)[-1]$ 
such that $\dee^2 [\eta] = \mmm_3$ as maps $\Hba^{\otimes 3} \to \Hba[-1]$. 
We will now show that we can use $\eta$ to modify our initial choice of $f_2$ and thereby $\Phi_3$ 
such that the new map $\widetilde{\Phi}_3$ has values in the image of $\ma_1$. 
%
We set $\tilde{f}_2 = f_2 - \eta$. 
We note that $\tilde{f}_2$ satisfies 
\begin{align*}
\ma_1\tilde{f}_2 = \ma_1(f_2 - \eta)  
= f_1\mh_2  - \ma_2(f_1 \otimes f_1) 
\end{align*}
since $\ma_1\eta=0$ by the assumption on the image of $\eta$. 
Thus, $\tilde{f}_2$ is also a cochain homotopy between $f_1\mh_2$ and $\ma_2(f_1 \otimes f_1)$.  
%
We then define the map $\widetilde{\Phi}_3$ by replacing $f_2$ with $\tilde{f}_2$, i.e., we define 
\begin{align*}
\widetilde{\Phi}_3 \coloneqq \ma_2(f_1 \otimes \tilde{f}_2 - \tilde{f}_2 \otimes f_1) - \tilde{f}_2(\mh_2 \otimes \One - \One \otimes \mh_2). 
\end{align*}
We then have 
\begin{align*}
\Phi_3 - \widetilde{\Phi}_3 =  \ma_2(f_1 \otimes \eta - \eta \otimes f_1) - \eta(\mh_2 \otimes \One - \One \otimes \mh_2). 
\end{align*}
By definition of $\dee^2$ and the assumption on $\eta$, 
this implies $\widetilde{\Phi}_3 = \Phi_3 - \dee^2\eta = 0$ as maps $\Hba^{\otimes 3} \to \Hba[-1]$. 
This implies that the image of $\widetilde{\Phi}_3$ is contained in the image of $\ma_1$. 
As explained in Remark \ref{rem:A3_formality_morphism}, this shows that there is a graded $\F$-linear map $f_3$ 
which extends $f_1, \tilde{f}_2$ to an $A_3$-algebra morphism 
which induces the identity on $\Hba$ after taking cohomology. 
\end{proof}

\begin{remark}\label{rem:can_choose_kappa3_trivial}
The proof of Theorem \ref{thm:existence_of_obstruction_class} 
shows that, if $\Ah$ is $A_3$-formal, 
we may choose $(f_1,f_2)$ so that 
 the map $\mmm_3 \colon \Hba^{\otimes 3} \to \Hba[-1]$ is trivial. 
\end{remark}

\begin{remark}
Theorem \ref{thm:existence_of_obstruction_class} shows that we may consider the class $\ga = [m_3]$ 
in $\HH^{3,-1}(\Hba)$ as the obstruction class for $A_3$-formality. 
\end{remark}

\begin{remark}\label{rem:intrinsically_A3_formal}
Let $\Hb$ be a graded algebra with $\HH^{3,-1}(\Hb)=0$. Then Theorem \ref{thm:existence_of_obstruction_class} implies that $\Hb$ is \emph{intrinsically} $A_3$-formal, 
i.e., every connected differential graded algebra $\Ah$ with cohomology isomorphic to $\Hb$ is $A_3$-formal. 
See also \cite[Lemma 4.6]{PQ}. 
We refer to \cite{DQ} for a non-trivial example of an intrinsically $A_3$-formal $\F_2$-algebra. 
\end{remark}



\section{$A_3$-formality and Koszul algebras}\label{sec:A3_and_Koszul}

We now recall Koszul algebras and then show that they allow for a simplified construction of the canonical class. 

\subsection{Koszul algebras} 

As a general reference for quadratic and Koszul algebras we refer to \cite{ppqa}. 

\begin{defn}
A \emph{quadratic algebra} is a non-negatively graded $\F$-algebra $A=\bigoplus_{i\ge 0}A_i$ such that $A_0=\F$ and $A$ is generated over $\F$ by $A_1$ with relations of degree two. 
Explicitly, let 
\[
T(A_1) = \F \oplus A_1 \oplus (A_1\otimes A_1) \oplus \cdots = \bigoplus_{i\ge 0} A_1^{\otimes i}
\]
be the free tensor algebra of the $\F$-vector space $A_1$. 
Then $A$ is quadratic if the canonical map $\tau \colon T(A_1) \to A$ is surjective and $\ker(\tau)$ is generated by its component $R = \ker(\tau)\cap (A_1\otimes A_1)$ of degree two, so that $A=T(A_1)/(R)$, where $(R)$ denotes the ideal generated by $R$.  
A quadratic algebra $A$ is called \emph{locally finite} if each $A_i$ is a finite-dimensional vector space over $\F$. 
\end{defn} 


To every quadratic algebra one can associate the following chain complex: 

\begin{defn}\label{def:Koszulcomplex}
Let $A=T(V)/(R)$ be a quadratic algebra over $\F$. 
We denote by $K_i^i(A)$ the $\F$-linear subspace defined by $K_0^0(A)=\F,$  $K_1^1(A)=V$, $K_2^2=R$ and 
\begin{align*}
K_i^i(A) = \bigcap_{0\le j \le i-2} V^{\otimes j} \otimes R \otimes V^{\otimes i-j-2} \subset V^{\otimes i} ~ \text{for}~i\ge 3.
\end{align*}
We set $K_i(A)=A\otimes K_i^i(A)\otimes A$. 
For each $i\ge 0$, we define a homomorphism $d=d_i \colon K_i(A) \to K_{i-1}(A)$ by
\[
d_i (a\otimes x_1 \otimes \cdots \otimes x_i \otimes a') = (ax_1) \otimes x_2 \otimes \cdots \otimes x_i \otimes a' + (-1)^i a \otimes x_1 \otimes x_2 \otimes \cdots \otimes (x_ia'). 
\]
Since $R\subset V^{\otimes 2}$ generates the relations in $A$, it is clear that $d^2=0$. 
We refer to the chain complex $(K(A),d)$ as the \emph{Koszul complex of $A$}.  
A morphism $\varphi \colon A \to B$ of quadratic algebras induces a morphism of cochain complexes $\varphi_K \colon K(A) \to K(B)$. 
\end{defn}

Let $(B(A),d_B)$ denote the bar complex of $A$ (see e.g.\,\cite[(1.1.4)]{Witherspoon}). 
Since $R$ describes the relations in $A$, it follows that the natural inclusion $K(A) \into B(A)$ defines a morphism of complexes.  

\begin{defn}\label{def:Koszulity}
A quadratic algebra $A$ is called a \emph{Koszul algebra} if the inclusion $(K(A),d) \into (B(A),d_B)$ is a quasi-isomorphism. 
\end{defn}

\begin{rem}\label{rem:Koszul_equivalent_defs}
We note that there are many different ways to characterise the notion of a Koszul algebra. 
We refer to \cite[Chapter 2, Sections 1 and 3]{ppqa} and 
for example \cite[Section 2]{BGS} and \cite[Section 3]{vandenbergh} 
for proofs that the alternative definitions are equivalent to Definition \ref{def:Koszulity}.  
\end{rem}

\begin{example}\label{example:exterior}
Important examples of Koszul algebras include symmetric and exterior algebras over $\F$ (see \cite[Example on page 20]{ppqa}). 
\end{example}


\subsection{Hochschild cohomology of Koszul algebras}\label{sec:HH_and_Koszul_complex}

We now show that for a Koszul algebra, we can use the following simple complex to compute its Hochschild cohomology. 
Let $A$ be a quadratic algebra and $M$ be a graded $A$-bimodule. 
For every $n \ge 0$ and $s \in \Z$, 
let $\partial_n$ denote the $\F$-linear map 
\begin{align*}
\partial_n \colon \Hom_{\F}(K_n^n(A), M_{n+s}) \to \Hom_{\F}(K_{n+1}^{n+1}(A), M_{n+1+s})
\end{align*}
defined by setting 
\begin{align*}
\partial_n(f)(x_1, \ldots, x_{n+1}) =  (-1)^{|x_1|s} x_1f(x_2, \ldots, x_{n+1}) 
+  (-1)^{n+1}f(x_1, \ldots, x_{n})x_{n+1}. 
\end{align*}
It is easily verified that $\partial \circ \partial =0$. 
It follows immediately from the definitions that the inclusion $\iota \colon K^i_i(A) \into A^{\ot i}$ induces a morphism of complexes via restriction along $\iota$:  
\begin{align}\label{eq:contractingiso_complex_C_to_K} 
(C^{\bbb,s}(A,M), \dee) \to (\uHom_{\F}(K_{\bbb}^{\bbb}(A), M[s]), \dee). 
\end{align}
We then get the following result (see also \cite[Proposition 3.3]{vandenbergh}): 

\begin{prop}\label{prop:HHviaKoszulcomplex_simple}
Let $A$ be a Koszul algebra and $M$ be a graded $A$-bimodule. 
For every pair $(n, s)$, the map of complexes  \eqref{eq:contractingiso_complex_C_to_K} 
induces an isomorphism 
\begin{align}\label{eq:diag_iso_of_HH}
\iota^* \colon \HH^{n,s}(A,M) \xto{\cong} H^n(\uHom_{\F}(K_{\bbb}^{\bbb}(A), M[s]),\partial). 
\end{align} 
\end{prop}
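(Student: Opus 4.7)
The plan is to identify both sides of the claimed isomorphism with the $\Ext$-groups $\Ext^n_{A^e}(A, M[s])$, where $A^e = A \otimes A^{\op}$ is the enveloping algebra, and to realise $\iota^*$ as the comparison map between two $\uHom_{A^e}$-complexes induced by an inclusion of free $A^e$-resolutions of $A$.

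First I would recall that the bar complex $(B(A), d_B)$, whose $n$th term is the free $A^e$-module $A \otimes A^{\ot n} \otimes A$, is a free resolution of $A$ over $A^e$. Via the standard tensor-hom adjunction
\[
\uHom_{A^e}(A \otimes V \otimes A, N) \cong \uHom_\F(V, N),
\]
applying $\uHom_{A^e}(-, M[s])$ to $B(A)$ recovers the graded Hochschild cochain complex $C^{\bbb, s}(A, M)$ of \eqref{eq:def_graded_complex_HH} together with its differential $\dee$; the sign $(-1)^{s|a_1|}$ appearing in the outer term of $\dee$ arises from the Koszul sign rule for passing an $\F$-linear map of internal degree $s$ past the element $a_1$.

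Next I would promote the Koszul complex $K(A) = A \otimes K^{\bbb}_{\bbb}(A) \otimes A$ to a free $A^e$-resolution of $A$ via the Koszul hypothesis. Each $K_i(A)$ is free over $A^e$ on the $\F$-vector space $K_i^i(A)$, and by Definition \ref{def:Koszulity} the inclusion $K(A) \hookrightarrow B(A)$ is a quasi-isomorphism of complexes of $A^e$-modules; since $B(A) \to A$ is a resolution, $K(A) \to A$ is one as well. Applying $\uHom_{A^e}(-, M[s])$ to $K(A)$ through the same adjunction yields the complex $\uHom_\F(K^{\bbb}_{\bbb}(A), M[s])$, and the restriction map \eqref{eq:contractingiso_complex_C_to_K} is precisely the chain map induced by $\iota$ under this adjunction. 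Hence both complexes compute $\Ext^n_{A^e}(A, M[s])$ and $\iota^*$ is induced by a comparison of resolutions, forcing it to be an isomorphism on cohomology.

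The remaining verification, which is where the only real bookkeeping lies, is that the differential that $\uHom_\F(K^{\bbb}_{\bbb}(A), M[s])$ inherits from the Koszul resolution coincides with the two-term formula $\partial_n$ of the statement. This is a consequence of the fact that the Koszul differential $d_i$ of Definition \ref{def:Koszulcomplex} has only two surviving terms: every interior term $a \otimes \cdots \otimes (x_j x_{j+1}) \otimes \cdots \otimes a'$ lies in the ideal generated by $R$ and therefore vanishes in $A$. Dualising the two surviving outer terms reproduces exactly the formula for $\partial_n$, with the sign $(-1)^{|x_1| s}$ again coming from the Koszul sign rule. The main obstacle, insofar as there is one, is thus packaged into the Koszul hypothesis itself, which is what guarantees that $K(A) \to A$ is a resolution; once this is granted, the rest is standard homological algebra with careful sign tracking.
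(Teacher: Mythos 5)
Your proposal is correct and follows essentially the same route as the paper: both pass to the enveloping algebra $A^e$, use the adjunction $\uHom_{A^e}(A\ot V\ot A, M[s])\cong \uHom_{\F}(V,M[s])$ to identify the two complexes with $\uHom_{A^e}$ of the bar and Koszul resolutions, and invoke Koszulity to compare them. Your $\Ext_{A^e}(A,M[s])$ packaging, via the comparison theorem for free resolutions, merely makes explicit the step the paper states tersely (that the Koszul quasi-isomorphism survives applying $\uHom_{A^e}(-,M[s])$), so it is the same argument.
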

\begin{proof}
Let $A^e = A\otimes A^{\op}$ denote the enveloping algebra of $A$ where $A^{\op}$ denotes the opposite algebra of $A$ (see e.g., \cite[Section 1.1]{Witherspoon}). 
The ring $A^e$ inherits a structure of a graded $\F$-algebra from $A$. 
We can consider the graded $A$-bimodule $M$ as a graded $A^e$-module. 
%
For every $n$, there is a natural isomorphism 
\begin{align}\label{eq:contractingiso_Hom_BtoT}
\Hom_{A^e}(B^n(A), M) = \Hom_{A^e}(A \otimes A^{\ot n} \otimes A, M) \xto{\cong} \Hom_{\F}(A^{\ot n}, M) 
\end{align}   
defined by sending $f$ to the $\F$-linear map which sends $a_1 \otimes \cdots \otimes a_n$ to $f(1\otimes a_1 \otimes \cdots \otimes a_n \otimes 1)$. 
%
This isomorphism descends to the category of graded $A^e$-modules and $\F$-vector spaces respectively, 
i.e., for every integer $s$, we have an isomorphism 
\begin{align}\label{eq:contractingiso_BtoT}
\uHom_{A^e}(B^n(A), M[s])  \xto{\cong} \uHom_{\F}(A^{\ot n}, M[s]).  
\end{align}  
It is straight-forward to check that the isomorphism \eqref{eq:contractingiso_BtoT} 
yields an isomorphism of cochain complexes 
\begin{align*}
(\uHom_{A^e}(B(A), M[s]),d_B^*)  \xto{\cong}  (C^{\bbb,s}(A,M), \dee).  
\end{align*} 
The restriction of \eqref{eq:contractingiso_Hom_BtoT} to Koszul complexes 
induces an isomorphism of cochain complexes 
\begin{align*}
(\uHom_{A^e}(K_{\bbb}(A), M[s]),d^*)  \xto{\cong} (\uHom_{\F}(K_{\bbb}^{\bbb}(A), M[s]),\partial). 
\end{align*}
%
We then get the following commutative diagram of cochain complexes 
\begin{align*}
\xymatrix{
(\uHom_{A^e}(B(A), M[s]),d_B^*) \ar[d]_-{\simeq} \ar[r]^-{\cong} & (C^{\bbb,s}(A,M), \dee) \ar[d] \\
(\uHom_{A^e}(K_{\bbb}(A), M[s]),d^*) \ar[r]^-{\cong} & (\uHom_{\F}(K_{\bbb}^{\bbb}(A), M[s]),\partial)
}
\end{align*} 
in which the horizontal maps are isomorphisms and the left-hand morphism is a quasi-isomorphism since $A$ is a Koszul algebra. 
This implies that the right-hand morphism is a quasi-isomorphism as well. 
\end{proof}


The Koszul complex simplifies the  computation of Hochschild cohomology. 
Here is a first example which will also play a role later. 

\begin{example}\label{example:exterior_algebra_on_2_gen}
Let $A$ be the exterior algebra on two generators $x$ and $y$ in degree one. 
Note that $A=T(V)/(R)$ with $V = A^1 = \F \langle x,y \rangle$ and 
\[
R = \F \langle x\ot x, y \ot y, x \ot y + y \ot x \rangle.
\]
It is well-known that $A$ is a Koszul algebra (see e.g.~\cite[Example on page 20]{ppqa}). 
%
We have $K_2^2(A) = R$, and 
\begin{align*}
K_3^3(A) & = (V\ot R) \cap (R\ot V) \\
& = \F \langle x\ot x \ot x, y \ot y \ot y, ~ y \ot x\ot x + x \ot y \ot x + x \ot x \ot y, \\
& ~ x \ot y \ot y + y \ot x \ot y + y \ot y \ot x \rangle.
\end{align*}
We note that every cochain $\psi \colon K^3_3(A) \to A^2$ is a cocycle since $A^3=0$, i.e., there is only the trivial map $K^4_4(A) \to A^3$. 
Hence the vector space of cocycles for $\partial_3$ equals the space $\Hom_{\F}(K^3_3,H^2)$ which has dimension four over $\F$. 
Now we can check that the differential 
$\partial_2 \colon \Hom_{\F}(K_2^2(A),A^1)  \to \Hom_{\F}(K^3_3(A),A^2)$ 
is trivial. 
Hence we get that $\HH^{3,-1}(A)$ is a four-dimensional vector space. 
In particular, it is non-trivial. 
\end{example} 
%

\begin{remark}\label{rem:dgas_with_exterior_algebra_as_cohomology}
Let $A$ again be the exterior algebra on two generators $x$ and $y$ in degree one. 
By \cite[Proposition 3.2]{Luetal0}, in order to define an $A_{\infty}$-algebra structure on the graded algebra $A$, 
it suffices to specify $\F$-linear maps 
$m_n \colon (A^1)^{\ot n} \to A^2$ for all $n \ne 2$. 
We set $m_1=0$, $m_n=0$ for $n\ge 4$ and can choose $m_3$ to be any $\F$-linear map $(A^1)^{\ot 3} \to A^2$.  
This defines a minimal $A_{\infty}$-algebra structure on $A$. 
Now we recall that, for every $A_{\infty}$-algebra $\Ah$, the canonical morphism of $A_{\infty}$-algebras 
$\Ah \to \Omega B\Ah$, 
where $\Omega$ denotes the cobar and $B$ the bar construction, is a quasi-isomorphism (see for example \cite[Proposition 4.5]{Luetal0}). 
Since $\Omega B\Ah$ is a differential graded algebra, every $A_{\infty}$-algebra is quasi-isomorphic as an $A_{\infty}$-algebra to a differential graded algebra. 
Since quasi-isomorphisms between $A_{\infty}$-algebras are homotopy equivalences by \cite[Theorem 3.7 on page 13]{Keller}, 
we conclude that $A$ is the minimal model of some differential graded algebra $\Ah$ with canonical class  $[m_3] \in \HH^{3,-1}(\Ah)$.  
By Theorem \ref{thm:A_3_Kadeishvili} and Example \ref{example:exterior_algebra_on_2_gen}, 
this shows that there are many differential graded algebras whose cohomology algebra is 
isomorphic to $A$ 
but which are not quasi-isomorphic as $A_3$-algebras. 
In particular, $A$ is not intrinsically $A_3$-formal (see Remark \ref{rem:intrinsically_A3_formal}). 
\end{remark}


\subsection{Canonical class for Koszul cohomology algebras}\label{sec:canonical_class_Koszul}

Let $(\Ah, \ddA)$ be a connected differential graded $\F$-algebra. 
We assume that its cohomology algebra $\Hb$ is a Koszul algebra. 
Let $R \subset H^1 \ot H^1$ denote the relations such that $\Hb = T(H^1)/(R)$.  
Let $\Zh^1 = \ker \ddA \subset \Ah^1$ denote the cocycles in degree one. 
Let $f_1 \colon K_1^1(\Hb) = H^1 \to  \Zh^1$ be an $\F$-linear map which induces the identity on cohomology. 
Let $f_2 \colon K_2^2(\Hb) = R \to \Ah^1$ be an $\F$-linear map such that $-\ma_2(f_1 \ot f_1) = \ddA f_2$. 
We define the $\F$-linear map $\Psi_3$ by 
\begin{align}\label{eq:def_of_Psi3_construction}
\Psi_3 = \ma_2(f_1 \otimes f_2 - f_2 \otimes f_1) \colon (H^1 \ot R) \cap (R \ot H^1) \to \Ah^2.
\end{align}
We check that $\Psi_3$ has image in the cocycles of $\Ah^2$: 
\begin{align*}
\ddA \Psi_3 & = \ddA \ma_2(f_1 \otimes f_2 - f_2 \otimes f_1) \\
 & = \ma_2(f_1 \ot \ddA f_2 - \ma_1f_2 \ot f_1) ~ \text{by \eqref{eq:m1m2} and} ~ \ddA f_1=0 \\
 & = \ma_2(-f_1 \ot \ma_2(f_1\ot f_1) + \ma_2(f_1 \ot f_1) \ot f_1) ~ \text{using \eqref{eq:relation_f1m2},} ~ \mh_1=0, \mh_2=0 \\
 & = 0 ~ \text{by associativity of} ~ \ma_2. 
\end{align*}
Hence $\Psi_3$ induces an $\F$-linear map 
\begin{align}\label{eq:def_of_kappa3_construction}
\kappa_3 = \ma_2(f_1 \otimes f_2 - f_2 \otimes f_1) \colon K_3^3(\Hb) = (H^1 \ot R) \cap (R \ot H^1) \to H^2.
\end{align}
The map $\kappa_3$ is compatible with maps of differential graded algebras in the following way: 

\begin{proposition}\label{prop:kappa3_functorial_Koszul}
Let $\varphi \colon \Ah \to \Bh$ be a morphism of connected differential graded $\F$-algebras. 
Assume that both $\Hba$ and $\Hbb$ are Kozul algebras. 
Consider $\Hbb$ as an $\Hba$-bimodule via the induced morphism $\varphi_{\bbb} \colon \Hba \to \Hbb$. 
Let $(\fA_1, \fA_2)$ and $(\fB_1, \fB_2)$ be choices of graded $\F$-linear maps as above for $\Ah$ and $\Bh$, respectively. 
Let $\kappa^{\Ah}_3$ and $\kappa^{\Bh}_3$ denote the induced maps for $\Hba$ and $\Hbb$, respectively, defined using formula \eqref{eq:def_of_kappa3_construction}. 
Then the cocycles $\varphi_{\bbb} \circ \kappa^{\Ah}_3$ and $\kappa^{\Bh}_3 \circ (\varphi_{\bbb})^{\ot 3}$ represent the same class in $\HH^{3,-1}(\Hba,\Hbb)$. 
\end{proposition}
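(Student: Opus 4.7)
The plan is to construct an explicit cochain $\eta$ in the Koszul-Hochschild complex $\uHom_{\F}(K_2^2(\Hba), \Hbb^1)$ whose boundary equals $\varphi_{\bbb} \circ \kappa_3^{\Ah} - \kappa_3^{\Bh} \circ (\varphi_{\bbb})^{\ot 3}$. The argument parallels the proof of Proposition \ref{prop:m3_functorial} but takes place in the smaller Koszul-Hochschild complex of Section \ref{sec:HH_and_Koszul_complex}, which is available by the Koszulity hypothesis on $\Hba$.

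First, I exploit connectedness. Since $\Bh^0 = \F$ and $\ddB$ vanishes on $\Bh^0$, we have $B^1(\Bh) = 0$. Both $\varphi \circ \fA_1$ and $\fB_1 \circ \varphi_{\bbb}$ are cocycle-valued $\F$-linear maps $H^1(\Ah) \to Z^1(\Bh)$ lifting the ring homomorphism $\varphi_{\bbb}$ on degree-one cohomology, hence their difference takes values in $B^1(\Bh) = 0$. Therefore $\varphi \circ \fA_1 = \fB_1 \circ \varphi_{\bbb}$ as strict $\F$-linear maps.

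Next, write $R$ for the quadratic relations of $\Hba$, so that $R = K_2^2(\Hba)$; because $\varphi_{\bbb}$ is an algebra morphism and the degree-two relations of a quadratic algebra coincide with the kernel of the multiplication $H^1 \ot H^1 \to H^2$, the map $(\varphi_{\bbb})^{\ot 2}$ carries $R$ into the analogous relations of $\Hbb$. Define
\begin{align*}
\tilde{\eta} \coloneqq \varphi \circ \fA_2 - \fB_2 \circ (\varphi_{\bbb})^{\ot 2} \colon R \to \Bh^1.
\end{align*}
Combining the defining identities $\ddA \fA_2 = -\ma_2(\fA_1 \ot \fA_1)$ and $\ddB \fB_2 = -\mb_2(\fB_1 \ot \fB_1)$ with the equality from the previous step, a short calculation gives $\ddB \tilde{\eta} = 0$. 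Thus $\tilde{\eta}$ factors through $Z^1(\Bh)$; let $\eta \colon R \to \Hbb^1$ be its composition with the projection $Z^1(\Bh) \onto \Hbb^1$.

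Finally, I check that $\partial_2 \eta = \varphi_{\bbb} \kappa_3^{\Ah} - \kappa_3^{\Bh} (\varphi_{\bbb})^{\ot 3}$ by direct expansion. Applied to $x_1 \ot x_2 \ot x_3 \in K_3^3(\Hba) = (H^1(\Ah) \ot R) \cap (R \ot H^1(\Ah))$, substituting $\varphi \fA_1 = \fB_1 \varphi_{\bbb}$ into the formulas \eqref{eq:def_of_kappa3_construction} for $\kappa_3^{\Ah}$ and $\kappa_3^{\Bh}$ collapses the difference into the two terms $\varphi_{\bbb}(x_1) \cdot \eta(x_2 \ot x_3)$ and $\eta(x_1 \ot x_2) \cdot \varphi_{\bbb}(x_3)$ in $\Hbb^2$, which matches the Koszul-Hochschild formula for $\partial_2 \eta(x_1, x_2, x_3)$ under the $\Hba$-bimodule structure on $\Hbb$ induced by $\varphi_{\bbb}$. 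The one obstacle is bookkeeping the Koszul signs, using that $\fA_2, \fB_2$ have degree $-1$ while $\fA_1, \fB_1$ have degree $0$; beyond that, the entire conceptual content is the pair of identities $\varphi \fA_1 = \fB_1 \varphi_{\bbb}$ and $\ddB \tilde{\eta} = 0$ proved above.
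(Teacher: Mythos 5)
Your proposal is correct and takes essentially the same route as the paper: you build the same degree-$(2,-1)$ cochain on $K_2^2(\Hba)$ out of $\varphi\circ f_2^{\Ah} - f_2^{\Bh}\circ(\varphi_{\bbb})^{\ot 2}$, check it is cocycle-valued, and verify that its Koszul--Hochschild coboundary equals $\varphi_{\bbb}\circ\kappa_3^{\Ah} - \kappa_3^{\Bh}\circ(\varphi_{\bbb})^{\ot 3}$. The only (legitimate) deviation is that you use connectedness, via $\delta_{\Bh}(1)=0$ and hence $B^1(\Bh)=0$, to force $\varphi\circ f_1^{\Ah} = f_1^{\Bh}\circ\varphi_{\bbb}$ exactly, whereas the paper lets these two lifts differ by a coboundary $\delta_{\Bh}g$ and accordingly carries $g$-correction terms in its homotopy $\tau$; your $\eta$ is that $\tau$ with $g=0$.
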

\begin{proof}
The graded $\F$-linear maps $\fB_1 \circ \varphi_\bbb$ and $\varphi|_{\ker \ddA} \circ \fA_1$ are maps 
$H^1(\Ah) \to \ker \ddB \subset \Bh^1$ of
cochain complexes which induce the same map in cohomology. 
This implies that there is a graded $\F$-linear map $g \colon H^1(\Ah) \to \Bh^0 = \F$ such that 
\begin{align}\label{eq:ddB_fB_varphi_varphi_fA}
\ddB g = \fB_1 \circ \varphi_\bbb - \varphi|_{\ker \ddA} \circ \fA_1.
\end{align}
%
We define the $\F$-linear map $t \colon K^2_2(\Hba) \to \Bh^1$ by 
\begin{align*}
t(x,y) := \varphi(\fA_2(x,y)) - \fB_2(\varphi_\bbb(x), \varphi_\bbb(y)) 
- \varphi(\fA_1(x))\cdot g(y) + g(x) \cdot \fB_1(\varphi_\bbb(y))
\end{align*}
for $(x,y) \in  K^2_2(\Hba)$. 
We can then compute that $t(x,y)$ lies in $\ker \ddB$ for all pairs $(x,y)$. 
%
Hence we may define the $\F$-linear map $\tau \colon  K^2_2(\Hba) \to H^1(\Bh)$ by sending $(x,y)$ to the cohomology class of $t(x,y)$. 
Using \eqref{eq:ddB_fB_varphi_varphi_fA}, we can then compute 
\begin{align*}
(\varphi \circ \kappa^{\Ah}_3 - \partial^2 \tau)(x,y,z) = \kappa^{\Bh}_3 \circ (\varphi_{\bbb})^{\ot 3}(x,y,z)
\end{align*}
in $H^2(\Bh)$. 
This implies the assertion. 
\end{proof}

\begin{remark}\label{rem:cochain_on_Koszul_is_the_restriction}
We note that the maps $t$ and $\tau$ in the proof of Proposition \ref{prop:kappa3_functorial_Koszul} 
are the restriction of the corresponding cochains 
which are used  in \cite[Proof of Proposition 5.4]{BKS} 
for the proof of Proposition \ref{prop:m3_functorial}. 
\end{remark}

\begin{corollary}\label{cor:independence_identity_Koszul} 
Let $\Ah$ be a connected differential graded $\F$-algebra such that its cohomology algebra $\Hba$ is a Koszul algebra. 
Let $\kappa_3$ be as defined in \eqref{eq:def_of_kappa3_construction}. 
Then the class $[\kappa_3] \in H^3(\uHom_{\F}(\Kbb(\Hba), \Hba[-1]),\partial)$ 
only depends on the differential graded algebra $\Ah$ and not the choice of the pair $(\fA_1,\fA_2)$. 
\end{corollary}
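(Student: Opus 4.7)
The plan is to deduce Corollary \ref{cor:independence_identity_Koszul} as a direct consequence of Proposition \ref{prop:kappa3_functorial_Koszul} applied to the identity morphism $\id_\Ah$. Given two choices $(\fA_1,\fA_2)$ and $(\fA'_1,\fA'_2)$ of graded $\F$-linear maps for $\Ah$ as in the discussion preceding \eqref{eq:def_of_kappa3_construction}, I would invoke the proposition with $\Bh = \Ah$ and $\varphi = \id_\Ah$, treating $(\fA_1,\fA_2)$ as the data for the source copy of $\Ah$ and $(\fA'_1,\fA'_2)$ as the data for the target copy. Since $(\id_\Ah)_\bbb$ is then the identity on $\Hba$, the proposition yields that the two cocycles $\kappa_3$ and $\kappa'_3$ in $\uHom_{\F}(K_3^3(\Hba), \Hba[-1])$ represent the same cohomology class.

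The one point that merits a brief check is that the ``class in $\HH^{3,-1}(\Hba,\Hba)$'' stated in the conclusion of Proposition \ref{prop:kappa3_functorial_Koszul} corresponds to a class in $H^3(\uHom_{\F}(\Kbb(\Hba), \Hba[-1]),\partial)$ via the isomorphism $\iota^*$ from Proposition \ref{prop:HHviaKoszulcomplex_simple}. This is immediate from the construction: the homotopy $\tau \colon K_2^2(\Hba) \to \Hba^1$ produced in the proof of Proposition \ref{prop:kappa3_functorial_Koszul} lives in the Koszul complex $\uHom_{\F}(\Kbb(\Hba), \Hba[-1])$, and the cochain identity $\varphi_{\bbb} \circ \kappa_3^\Ah - \partial^2 \tau = \kappa_3^\Bh \circ (\varphi_\bbb)^{\ot 3}$ established there takes place in this complex. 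Specialising to $\varphi = \id_\Ah$ gives $\kappa_3 - \kappa'_3 = \partial^2 \tau$, which is precisely what the corollary asserts.

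Since the substantive work is already carried out in Proposition \ref{prop:kappa3_functorial_Koszul}, I do not anticipate any genuine obstacle; the corollary is essentially a tautology once one specialises to the identity morphism. Alternatively, if one prefers not to revisit the argument of the proposition, one can invoke Corollary \ref{cor:independence_identity} for the Hochschild canonical class $[m_3]$ and verify that the restriction map $\iota^*$ of Proposition \ref{prop:HHviaKoszulcomplex_simple} sends $[m_3]$ to $[\kappa_3]$, which follows directly from comparing the formulas \eqref{eq:def_of_Phi3_construction} and \eqref{eq:def_of_kappa3_construction} after restriction along the inclusion $K_3^3(\Hba) \hookrightarrow (H^1)^{\ot 3}$.
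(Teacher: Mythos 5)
Your proposal is correct and matches the paper's own proof, which likewise deduces the corollary by applying Proposition \ref{prop:kappa3_functorial_Koszul} with $\Bh=\Ah$ and $\varphi=\id$, using the two different pairs $(\fA_1,\fA_2)$ and $(\fA'_1,\fA'_2)$ for source and target. Your extra remark that the homotopy $\tau$ already lives in the Koszul complex $\uHom_{\F}(\Kbb(\Hba),\Hba[-1])$ is a harmless (indeed slightly more careful) clarification of the same argument.
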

\begin{proof}
The assertion follows from Proposition \ref{prop:kappa3_functorial_Koszul} with $\Ah = \Bh$ and $\varphi$ being the identity. 
\end{proof}

We are now ready to show that we can compute the canonical class of a differential graded algebra 
whose cohomology is Koszul as follows: 

\begin{proposition}\label{prop:canonical_class_via_Koszul_complex} 
Let $(\Ah, \ddA)$ be a connected differential graded $\F$-algebra such that its cohomology algebra $\Hba$ is a Koszul algebra. 
Let $(\fA_1,\fA_2)$ be a choice of maps $\fA_1 \colon H^1(\Ah) \to \ker \ddA \subset \Ah^1$ 
and $\fA_2 \colon K_2^2(\Hba) \to \Ah^1$ such that 
$\ddA \fA_2 = -\ma_2(\fA_1 \ot \fA_1)$.
Let $\kappa_3$ be defined as in \eqref{eq:def_of_kappa3_construction}. 
Then $[\kappa_3]$ 
is the image of the canonical class $\ga \in \HH^{3,-1}(\Hba)$ 
under the isomorphism $\iota^*$ in \eqref{eq:diag_iso_of_HH}. 
In particular, the canonical class $\ga$ is determined by any pair $(\fA_1,\fA_2)$ satisfying the above assumptions.  
Moreover, $\ga = 0$ if and only if $[\kappa_3] = 0$. 
\end{proposition}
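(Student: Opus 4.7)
The plan is to reduce the claim to a direct comparison of cochains: I will extend the given pair $(\fA_1,\fA_2)$ to a full pair $(f_1,f_2)$ of the form used in the construction of the canonical class (see Definition \ref{def:canonical_class}), and then check that restricting the resulting cochain $\Phi_3$ along $\iota \colon K_\bullet^\bullet(\Hba) \hookrightarrow \Hba^{\otimes \bullet}$ yields exactly $\Psi_3$ from \eqref{eq:def_of_Psi3_construction}.

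First I would extend $\fA_1$ (which is only defined in degree $1$) to a graded $\F$-linear map $f_1 \colon \Hba \to \ker \ddA$ inducing the identity on cohomology, by choosing any $\F$-linear lifts in degrees $n \ne 1$. The more substantive step is to extend $\fA_2 \colon R \to \Ah^1$ to a graded $\F$-linear map $f_2 \colon \Hba^{\otimes 2} \to \Ah[-1]$ satisfying $\ma_1 f_2 = f_1 \mh_2 - \ma_2(f_1 \otimes f_1)$. The key observation is that on $R = K_2^2(\Hba)$ the product $\mh_2$ vanishes identically, since $R$ is by definition the space of relations of the quadratic algebra $\Hba$; the required equation therefore reduces on $R$ to $\ma_1 f_2 = -\ma_2(f_1 \otimes f_1)$, which is precisely the hypothesis on $\fA_2$. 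Choosing a graded $\F$-linear splitting of the inclusion $R \hookrightarrow H^1 \otimes H^1$ and using that $f_1\mh_2(x,y) - \ma_2(f_1(x),f_1(y))$ is always a coboundary (both terms are cocycles representing the class $[x]\cdot[y] = \mh_2(x,y)$), one obtains a compatible extension $f_2$ on all of $\Hba^{\otimes 2}$.

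With such a pair $(f_1,f_2)$, the canonical class $\gamma_{\Ah}$ is represented by the Hochschild cocycle $m_3$ induced by
\[
\Phi_3 = \ma_2(f_1 \otimes f_2 - f_2 \otimes f_1) - f_2(\mh_2 \otimes \One - \One \otimes \mh_2).
\]
Restricting to $K_3^3(\Hba) = (H^1 \otimes R) \cap (R \otimes H^1)$, the second summand vanishes: on the factor lying in $R \otimes H^1$, the operator $\mh_2 \otimes \One$ is zero because $\mh_2|_R = 0$, and symmetrically on $H^1 \otimes R$. Hence $\Phi_3|_{K_3^3} = \Psi_3$, which after passing to cohomology classes in $\Ah^2$ yields $m_3|_{K_3^3} = \kappa_3$. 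This is exactly the statement that $\iota^*[m_3] = [\kappa_3]$ under the morphism of complexes \eqref{eq:contractingiso_complex_C_to_K}, so by Proposition \ref{prop:HHviaKoszulcomplex_simple} the two classes correspond under the isomorphism \eqref{eq:diag_iso_of_HH}.

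The remaining assertions follow formally: the independence of $\gamma_{\Ah}$ from the choice of $(\fA_1,\fA_2)$ is immediate by combining Corollary \ref{cor:independence_identity_Koszul} with the injectivity of $\iota^*$, and the equivalence $\gamma_{\Ah} = 0 \iff [\kappa_3] = 0$ is a direct consequence of $\iota^*$ being an isomorphism. I expect the main technical hurdle to be the construction of the extension of $f_2$: one must verify that it can be chosen graded and $\F$-linear in a way compatible with the cochain-homotopy equation simultaneously in every bidegree of $\Hba^{\otimes 2}$, which relies on the fact that $\F$ is a field to split the surjections onto the relevant coboundary subspaces in each degree.
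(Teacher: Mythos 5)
Your argument is correct and rests on the same key computation as the paper's proof: on $K_3^3(\Hba) = (H^1\ot R)\cap(R\ot H^1)$ the term $f_2(\mh_2 \ot \One - \One \ot \mh_2)$ in $\Phi_3$ dies because $\mh_2$ annihilates $R$, so the restriction of $\Phi_3$ along $\iota$ is $\ma_2(f_1\ot f_2 - f_2 \ot f_1)$, i.e.\ the map $\Psi_3$ of \eqref{eq:def_of_Psi3_construction}, and the identification of classes then follows from Proposition \ref{prop:HHviaKoszulcomplex_simple}. The only organizational difference is how the partial data $(\fA_1,\fA_2)$ is reconciled with the full graded pair $(f_1,f_2)$ entering the definition of $\ga$: the paper restricts an arbitrary full pair to $(H^1, K_2^2(\Hba))$ and then invokes Corollary \ref{cor:independence_identity_Koszul} to conclude that the given pair yields the same class $[\kappa_3]$, whereas you extend the given pair to a full one, correctly noting that $\mh_2|_R=0$ makes the hypothesis $\ddA\fA_2 = -\ma_2(\fA_1\ot\fA_1)$ consistent with the global homotopy equation and that $f_1\mh_2 - \ma_2(f_1\ot f_1)$ is coboundary-valued in every bidegree (the field hypothesis supplying the needed linear sections), with $\ga=[m_3]$ for the extended pair guaranteed by Corollary \ref{cor:independence_identity}. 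Both routes are complete; yours shows directly that the specific pair in the statement computes $\iota^*(\ga)$ at the modest cost of the extension step, while the paper's version trades that step for the independence statement already established via Proposition \ref{prop:kappa3_functorial_Koszul}.
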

\begin{proof}
To simplify the notation we write $\Hb$ for $\Hba$. 
By definition, $\ga$ is determined by the following data. 
We choose an $\F$-linear graded map $f_1 \colon \Hb \to \ker \ddA$ which induces the identity on $\Hb$.   
We then choose an $\F$-linear graded map 
$f_2 \colon \Hb \otimes \Hb \to \Ah[-1]$ satisfying  
$\ddA f_2 = f_1\mh_2  - \ma_2(f_1 \otimes f_1)$. 
We define the graded $\F$-linear map $\Phi_3 \colon (\Hb)^{\otimes 3} \to \Ah[-1]$ by 
\begin{align*}
\Phi_3 = \ma_2(f_1 \otimes f_2 - f_2 \otimes f_1) - f_2(\mh_2 \otimes \One - \One \otimes \mh_2),
\end{align*}
and let $m_3 \colon (\Hb)^{\otimes 3} \to \Hb[-1]$ be the induced map to cohomology. 
Then $\ga = [m_3] \in \HH^{3,-1}(\Hb)$. 
The isomorphism $\iota^*$ of \eqref{eq:diag_iso_of_HH} sends $[m_3]$ to the class represented 
by the restriction ${m_3}|_{K^3_3(\Hb)} \colon K_3^3(\Hb) \to H^2$ 
of $m_3$ to the subspace $K^3_3(\Hb) = (H^1\otimes R) \cap (R \otimes H^1) \subset (\Hb)^{\ot 3}$. 
Since the restrictions of $\mh_2 \ot \One$ and $\One \ot \mh_2$ to $(H^1\otimes R) \cap (R \otimes H^1)$ vanish in $H^2 = (H^1 \ot H^1)/R$, 
the map ${m_3}|_{K^3_3(\Hb)} \colon K_3^3(\Hb) \to H^2$ is given by class of the $\F$-linear map 
\begin{align*}
\ma_2((f_1)|_{H^1} \otimes  (f_2)|_{K_2^2(\Hb)} - (f_2)|_{K_2^2(\Hb)} \otimes (f_1)|_{H^1})  \colon (H^1\otimes R) \cap (R \otimes H^1) \to \Ah^2. 
\end{align*}
Hence, ${m_3}|_{K^3_3(\Hb)}$ is the map $\kappa_3$ defined as in \eqref{eq:def_of_kappa3_construction} using the pair $((f_1)|_{H^1}, (f_2)|_{K_2^2(\Hb)})$. 
We thus have 
\begin{align*}
\iota^*(\ga) = [{m_3}|_{K^3_3(\Hb)}] = [\kappa_3] ~ \text{in} ~ H^3(\uHom_{\F}(\Kbb(\Hb), \Hb[-1]),\partial). 
\end{align*}
Moreover, by Corollary \ref{cor:independence_identity_Koszul}, any other choice of pair $(\fA_1, \fA_2)$ yields the same class $[\kappa_3]$.  
This finishes the proof. 
\end{proof}

\begin{remark}\label{rem:Koszul_restricted_f3_suffices}
Let $(\Ah, \ddA)$ be a connected differential graded $\F$-algebra such that its cohomology algebra $\Hba$ is a Koszul algebra. 
Let $(\fA_1,\fA_2)$ be a choice of maps $\fA_1 \colon H^1(\Ah) \to \ker \ddA$ and $\fA_2 \colon K_2^2(\Hba) \to \Ah^1$ such that 
$\ddA \fA_2 = -\ma_2(\fA_1 \ot \fA_1)$. 
It is a consequence of Proposition \ref{prop:canonical_class_via_Koszul_complex} that, in order to prove that $\ga$ vanishes, 
it suffices to show that the image of the map 
$\ma_2(f_1 \otimes f_2 - f_2 \otimes f_1)$ 
is contained in the coboundaries in  $\Ah^2$.  
\end{remark}


As a direct application of Proposition \ref{prop:canonical_class_via_Koszul_complex} we observe the following: 

\begin{proposition}\label{prop:Koszul_with_H2_equal_zero_is_A3formal}
Let $\Ah$ be a connected differential graded algebra such that its cohomology algebra $\Hba$ is Koszul. 
Assume that $H^2(\Ah) = 0$. 
Then $\Ah$ is $A_3$-formal. 
\end{proposition}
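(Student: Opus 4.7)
The plan is to apply Proposition \ref{prop:canonical_class_via_Koszul_complex} and then invoke Theorem \ref{thm:existence_of_obstruction_class}. By the former, the canonical class $\gamma_\Ah \in \HH^{3,-1}(\Hba)$ can be represented, via the isomorphism $\iota^*$ in \eqref{eq:diag_iso_of_HH}, by the class $[\kappa_3]$ in $H^3(\uHom_\F(K_\bbb^\bbb(\Hba), \Hba[-1]),\partial)$, where $\kappa_3$ is constructed as in \eqref{eq:def_of_kappa3_construction} from any choice of maps $\fA_1 \colon H^1(\Ah) \to \ker \ddA$ and $\fA_2 \colon K_2^2(\Hba) \to \Ah^1$ with $\ddA \fA_2 = -\ma_2(\fA_1 \otimes \fA_1)$.

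First I would fix such a pair $(\fA_1,\fA_2)$; its existence follows from the fact that the restriction $\ma_2(\fA_1 \otimes \fA_1)\colon R \to \Ah^2$ takes values in coboundaries, because its cohomology class in $H^2(\Ah) = 0$ is the restriction of the cup product $H^1 \otimes H^1 \to H^2$ to $R$, which vanishes by the definition of $R$. Then I would form $\kappa_3 = \ma_2(\fA_1 \otimes \fA_2 - \fA_2 \otimes \fA_1) \colon K_3^3(\Hba) \to H^2(\Ah)$ as in \eqref{eq:def_of_kappa3_construction}.

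The key observation is that the target of $\kappa_3$ is $H^2(\Ah)$, which by hypothesis is zero. Hence $\kappa_3 = 0$ as a map of $\F$-vector spaces, and therefore $[\kappa_3] = 0$. Proposition \ref{prop:canonical_class_via_Koszul_complex} then gives $\gamma_\Ah = 0$ in $\HH^{3,-1}(\Hba)$, and Theorem \ref{thm:existence_of_obstruction_class} yields that $\Ah$ is $A_3$-formal.

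There is no serious obstacle: the proof reduces to observing that the Koszul hypothesis concentrates the canonical class in a single bidegree whose target $H^2(\Ah)$ vanishes by assumption. The conceptual content is entirely packaged into Proposition \ref{prop:canonical_class_via_Koszul_complex}, which allows us to replace the bar-complex representative $m_3$ (which takes values in all of $\Hba[-1]$) with the Koszul-complex representative $\kappa_3$ (which only sees $H^2$).
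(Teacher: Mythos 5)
Your proposal is correct and follows essentially the same route as the paper: compute the canonical class via the Koszul-complex representative $\kappa_3 \colon K_3^3(\Hba) \to H^2(\Ah)$ from Proposition \ref{prop:canonical_class_via_Koszul_complex}, observe that it is forced to vanish since its target $H^2(\Ah)$ is zero, and conclude $A_3$-formality from Theorem \ref{thm:existence_of_obstruction_class}. The extra remark on the existence of the pair $(\fA_1,\fA_2)$ is a harmless addition that the paper leaves implicit.
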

\begin{proof}
By Proposition \ref{prop:canonical_class_via_Koszul_complex}, 
we can compute the canonical class of $\gamma_{\Ah}$ 
via the map $\kappa_3 \colon K^3_3(\Hba) \to H^2(\Ah)$ of \eqref{eq:def_of_kappa3_construction} since $\Hba$ is Koszul. 
Since $H^2(\Ah) = 0$ by assumption, $\kappa_3$ is trivial and $\gamma_{\Ah}=0$. 
Hence $\Ah$ is $A_3$-formal by Theorem \ref{thm:existence_of_obstruction_class}. 
\end{proof}



\section{$A_3$-formality and group cohomology}\label{sec:group_coh}

We now specialise to differential graded algebras which arise from continuous group cohomology of profinite groups. 
In Section \ref{sec:Demushkin_groups_are_Koszul}, we show that the $\F_p$-cohomology algebra of a pro-$p$ Demushkin group is Koszul. 

\subsection{Continuous group cohomology and $A_3$-formality}

Let $G$ be a profinite group, and let $G^n$ denote the $n$-fold direct product of $G$ with itself. 
Let $p$ be a prime number. 
Let $\Ch^n(G,\F_p)$ denote the $\F_p$-vector space of continuous functions $G^n \to \F_p$ with respect to the discrete topology on $\F_p$ and the profinite topology on $G$. 
Following \cite[\S 2.2]{Se} the differential $\delta \colon \Ch^n(G,\F_p) \to \Ch^{n+1}(G,\F_p)$, which is defined by 
\begin{align*}
(\delta \varphi)(g_1, \ldots, g_{n+1}) & = \varphi(g_2, \ldots,g_{n+1}) \\
& + \sum_{i=1}^n (-1)^i \varphi(g_1, \ldots, g_ig_{i+1}, \ldots, g_{n+1}) \\
&  + (-1)^{n+1} \varphi(g_1,\ldots,g_n), 
\end{align*}
turns $\Cb(G,\F_p)$ into a cochain complex whose cohomology $\Hb(G,\F_p)$ is the continuous cohomology of $G$ with coefficients in the trivial $G$-module $\F_p$.  
In particular, $H^1(G,\F_p)$ is the group of continuous group homomorphisms $G \to \F_p$. 
The cohomology $\Hb(G,\F_p)$ is equipped with a cup-product defined as follows. 
For every $\varphi \in \Ch^i(G,\F_p)$ and $\psi \in \Ch^j(G,\F_p)$, 
we define their product $\varphi \cup \psi \in \Ch^{i+j}(G,\F_p)$ by the formula: 
\begin{align*}
(\varphi \cup \psi)(g_1,\ldots,g_{i+j}) = 
\varphi(g_1,\ldots,g_i) \cdot \psi(g_{i+1},\ldots,g_{i+j}).  
\end{align*}
This induces the cup-product on cohomology which turns $\Hb(G,\F_p)$ into a graded 
$\F_p$-algebra. 

\begin{defn}
Let $G$ be a profinite group and $p$ be a prime number. 
We say that $G$ is \emph{$A_3$-formal with respect to $p$}, or just \emph{$A_3$-formal} if the choice of $p$ is clear, 
if the differential graded $\F_p$-algebra $\Cb(G,\F_p)$  
is $A_3$-formal with respect to $p$.  
We write $\gag$ for the canonical class $\gamma_{\Cb(G,\F_p)}$ in $\HH^{3,-1}(\Hb(G,\F_p))$ and call it the \emph{canonical class of $G$}.  
We say that $G$ is \emph{$A_{\infty}$-formal} if the differential graded $\F_p$-algebra $\Cb(G,\F_p)$ is $A_{\infty}$-formal. 
\end{defn}

As a first example, we observe the following: 

\begin{example}
Let $G$ be a profinite group such that $\Hb(G,\F_p)$ is Koszul,  
and assume $H^2(G,\F_p)=0$. 
It then follows from Proposition \ref{prop:Koszul_with_H2_equal_zero_is_A3formal} that 
$G$ is $A_3$-formal with respect to $p$. 
\end{example}

Free pro-$p$ groups satisfy the following much stronger property (see also Remark \ref{rem:intrinsically_A3_formal}):  
A graded $\F_p$-algebra $A$ is called {\em intrinsically} $A_{\infty}$-formal 
if every differential graded algebra $\Ah$ with $\Hb(\Ah)\cong A$ is $A_{\infty}$-formal.

\begin{proposition}\label{prop:free_is_formal}
Let $G$ be a free pro-$p$ group. Then $G$ is intrinsically $A_{\infty}$-formal. 
In particular, $G$ is $A_3$-formal. 
\end{proposition}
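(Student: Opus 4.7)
The plan is to exploit the fact that a free pro-$p$ group has $\F_p$-cohomological dimension one. It is classical (see for instance \cite[Chapter I, \S 3.4]{Se}) that if $G$ is a free pro-$p$ group, then $H^i(G,\F_p) = 0$ for every $i \ge 2$, so that $H \coloneqq \Hb(G,\F_p)$ is concentrated in degrees $0$ and $1$, with $H^0 = \F_p$.

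Let $\Ah$ be any connected differential graded $\F_p$-algebra with $\Hba \cong H$ as graded algebras. By Kadeishvili's theorem (in its full $A_{\infty}$-version refining Theorem \ref{thm:A_3_Kadeishvili}), there exists a minimal $A_{\infty}$-structure $(\mh_n)_{n\ge 2}$ on $H$ with $\mh_2$ equal to the cup product, together with a quasi-isomorphism of $A_{\infty}$-algebras $H \to \Ah$ lifting the identity on cohomology. It suffices to show that this minimal structure can be chosen so that $\mh_n = 0$ for every $n \ge 3$; $A_3$-formality will then follow as a direct specialisation.

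The key observation is a pure degree count. Each higher operation $\mh_n \colon H^{\otimes n} \to H$ has degree $2 - n$, so if $x_1, \ldots, x_n$ are homogeneous elements of strictly positive degree, then necessarily $|x_j| = 1$ for each $j$, and $\mh_n(x_1, \ldots, x_n)$ would have degree $n + (2 - n) = 2$. Since $H^2 = 0$ by the cohomological dimension assumption, this value must vanish. For inputs containing at least one factor equal to the unit $1 \in H^0$, strict unitality of the $A_{\infty}$-structure forces $\mh_n = 0$ for all $n \ge 3$. Hence every higher operation vanishes identically on $H^{\otimes n}$, the minimal model reduces to the honest differential graded algebra $(H, 0, \mh_2)$, and $\Ah$ is $A_{\infty}$-formal.

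The one delicate point — and really the only obstacle — is that the minimal model must be taken to be strictly unital. Over a field this is standard: Merkulov's explicit construction (see Remark \ref{rem:Merkulov_structure}), applied with a contracting data $(\iota,p,h)$ chosen so that $\iota(1) = 1$, $p(1) = 1$ and $h(1) = 0$, yields a strictly unital minimal $A_{\infty}$-model by a direct inspection of the iterative formulas. Such a contraction always exists when the unit is a cocycle representing a non-zero cohomology class, which is automatic here. The rest of the argument is pure degree bookkeeping, and the intrinsic nature of the conclusion reflects that the vanishing of $\mh_n$ for $n \ge 3$ was forced by $H$ alone and not by any feature of $\Ah$.
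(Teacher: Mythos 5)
Your argument is correct, but it follows a genuinely different route from the paper's. The paper never touches the minimal model directly: it observes that $\Hb(G,\F_p)$ is Koszul, computes Hochschild cohomology through the Koszul complex, where a cochain in bidegree $(n,s)$ is a map $K_n^n \to H^{n+s}$ with $K_n^n \subset (H^1)^{\otimes n}$, concludes that $\HH^{n,2-n}(\Hb(G,\F_p)) = 0$ for all $n \ge 3$ because $H^i(G,\F_p)=0$ for $i\ge 2$, and then invokes Kadeishvili's intrinsic-formality criterion (cited from \cite[Theorem 4.7]{ST}). You instead kill the higher operations on a minimal $A_\infty$-model by a direct degree count, which is more elementary and avoids Koszulity and Hochschild cohomology altogether; the two arguments rest on the same numerical fact ($H$ vanishes above degree $1$, so anything of total degree $\ge 2$ dies), but the paper uses it to annihilate the obstruction groups while you use it to annihilate the operations themselves. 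The price of your route is exactly the point you single out: without strict unitality, operations such as $m_3(1,x,y)$ land in degree $1$ and are not excluded by degrees, so you genuinely need a strictly unital minimal model. That such models exist over a field is standard (Lef\`evre-Hasegawa, Seidel), but your claim that it drops out of Merkulov's formulas "by direct inspection" once $\iota(1)=1$, $p(1)=1$, $h(1)=0$ is optimistic -- one needs the usual side conditions ($h\iota=0$, $ph=0$, $h^2=0$) and a unit-compatible splitting, or simply a citation to the existence of strictly unital minimal models; the paper's Hochschild route sidesteps the unit entirely because the Koszul (i.e.\ normalized) cochains only see $H^1$. With that step properly referenced, your proof is complete and, like the paper's, depends only on $\Hb(G,\F_p)$, so it does yield intrinsic $A_\infty$-formality and in particular $A_3$-formality.
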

\begin{proof}
Since the inclusion map of the Koszul complex into the bar complex is the identity, $\Hb(G,\F_p)$ is Koszul. 
Hence we may compute the Hochschild cohomology of $\Hb(G,\F_p)$ using the Koszul complex. 
Since $H^i(G,\F_p)$ is trivial for $i\ge 2$, the groups $\HH^{n,s}(\Hb(G,\F_p))$ for $n+s\ge 2$ are trivial. 
In particular, we have $\HH^{n,2-n}(\Hb(G,\F_p))$ for all $n \ge 3$. 
By Kadeishvili's theorem, proven also in \cite[Theorem 4.7, page 85]{ST}, this implies that $\Cb(G,\F_p)$ is intrinsically $A_{\infty}$-formal. 
\end{proof} 

For an example of a finite group we note the following: 

\begin{example}\label{example:finite_Demushkin_is_A3_formal}
Recall that the only finite Demushkin group is $G=\Z/2\Z$. 
In this case, $\Hb(G,\F_2)$ is isomorphic to the polynomial algebra $\F_2[x]$ in one generator. 
The latter is a Koszul algebra with no relations and hence $K_n^n(\Hb(G,\F_2))$ is trivial for all $n\ge 2$. 
As in the proof of Proposition \ref{prop:free_is_formal},  
this implies that $\Hb(G,\F_2)$ is intrinsically formal. 
See also \cite[Lemma 6.2]{PQ}.
\end{example}


\subsection{Dwyer's criterion}\label{sec:Dwyer}

We now recall from \cite[Theorem 2.4]{Dwyer} that the vanishing of triple Massey products in group cohomology 
can be characterised as follows. 
Let $U_n(\F_p)$ denote the group of all upper triangular unipotent $(n\times n)$-matrices with coefficients in $\F_p$. 
Let $Z_n(\F_p)$ denote the center of $U_n(\F_p)$, i.e., the subgroup of all matrices in $U_n(\F_p)$ with all off-diagonal entries being $0$ except at position $(1,n)$. 
Write $\bU_n(\F_p) = U_n(\F_p)/Z_n(\F_p)$. 

\begin{notn} 
We let $e_{ij} \colon U_n(\F_p) \to \F_p$ denote the projection to the $(i,j)$-coordinate. 
\end{notn}
The following result is a special case of \cite[Theorem 2.4]{Dwyer}. 

\begin{theorem}[Dwyer]\label{thm:Dwyer_alt1}
Let $G$ be a profinite group. 
Let $a_1, a_2, a_3 \in H^1(G,\F_p)$. 
There is a one-one correspondence $M \leftrightarrow \brr_M$ between defining systems $M$ for $\langle a_1, a_2, a_3 \rangle$ 
and continuous group homomorphisms $\brr_M \colon G \to \bU_4(\F_p)$ such that 
$e_{i,i+1} \circ (\brr_M) = - a_i$ for $i=1,2,3$. 
The correspondence is given by sending a defining system $M=\{a_{i,j}\}$ to the 
continuous group homomorphism $\brr \colon G \to \bU_4(\F_p)$ given by $e_{i,j} \circ \brr = -a_{i,j}$ for $1 \le i < j \le 4$. 
Moreover, the element $\langle a_1, a_2, a_3 \rangle_M \in H^2(G,\F_p)$ vanishes if and only if the dotted arrow in the diagram 
\begin{align*}
\xymatrix{
 & & & \ar@{.>}[dl]_-{\rr_M} G \ar[d]^{\brr_M} & \\
0 \ar[r] & Z_4(\F_p) \ar[r] & U_4(\F_p) \ar[r] & \bU_4(\F_p) \ar[r] & 0 
}
\end{align*}
exists and makes the diagram commutative. 
\end{theorem}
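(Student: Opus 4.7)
The plan is to reduce the entire statement to the combinatorics of matrix multiplication in $U_n(\F_p)$. First, I would observe that, for a continuous map $\varphi \colon G \to U_4(\F_p)$, the entries $\varphi_{ij} := e_{ij}\circ\varphi$ are continuous functions $G \to \F_p$, and the condition $\varphi(gh) = \varphi(g)\varphi(h)$ is equivalent to the system of identities $\varphi_{ij}(gh) = \sum_{i\le k\le j}\varphi_{ik}(g)\varphi_{kj}(h)$ for all $1\le i<j\le 4$. Passing to the quotient $\bU_4(\F_p) = U_4(\F_p)/Z_4(\F_p)$ simply amounts to dropping this identity at $(i,j)=(1,4)$.

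Next, setting $a_{ij} := -\varphi_{ij}$ and using that the diagonal entries are $1$, the $(i,i+1)$-identity reads $a_{i,i+1}(gh) = a_{i,i+1}(g)+a_{i,i+1}(h)$, so these are $1$-cocycles, while a direct rearrangement of the $(i,i+2)$-identity yields
\[
\delta(a_{i,i+2})(g,h) = a_{i,i+1}(g)\cdot a_{i+1,i+2}(h) = (\bar{a}_{i,i+1}\cup a_{i+1,i+2})(g,h),
\]
using the paper's convention $\bar{a}=a$ for degree-one classes. These are precisely the relations defining a defining system for $\langle a_1,a_2,a_3\rangle$ once one stipulates that $e_{i,i+1}\circ\varphi = -a_i$, i.e., that $a_{i,i+1}$ represents $a_i$. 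Conversely, given a defining system $M=\{a_{12},a_{23},a_{34},a_{13},a_{24}\}$, I would assemble its five cochains into the entries above the diagonal of a matrix (with sign $-a_{ij}$ at position $(i,j)$), and the cocycle and defining-system relations verified above would force the resulting continuous map $\brr_M \colon G \to \bU_4(\F_p)$ to be a group homomorphism. This yields the claimed bijection.

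For the vanishing criterion, a continuous lift $\rr_M \colon G \to U_4(\F_p)$ of $\brr_M$ is precisely the datum of a continuous function $a_{14}\colon G\to\F_p$ (sitting as $-a_{14}$ in the $(1,4)$-slot) such that the resulting map remains a homomorphism. The one remaining multiplicativity relation at position $(1,4)$ translates, by the same matrix computation as above, into the single condition
\[
\delta(a_{14}) = \bar{a}_{12}\cup a_{24} + \bar{a}_{13}\cup a_{34}.
\]
Hence such a lift exists if and only if the Massey product cocycle $\bar{a}_{12}\cup a_{24} + \bar{a}_{13}\cup a_{34}$ is a coboundary, i.e., if and only if $\langle a_1,a_2,a_3\rangle_M = 0$ in $H^2(G,\F_p)$.

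The main obstacle in carrying out this plan is sign bookkeeping: one has to verify that the choice $a_{ij} := -\varphi_{ij}$ produces exactly the cocycle identities of Definition \ref{def:Massey_product}, and that the cocycle arising at the $(1,4)$-entry agrees on the nose with the Massey product representative chosen in the paper. Once this is settled, both directions of the bijection and the vanishing criterion follow from a single matrix-multiplication computation.
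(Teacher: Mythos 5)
Your argument is correct, and it is worth noting that the paper does not prove this statement at all: it is quoted as a special case of Dwyer's Theorem 2.4 in \cite{Dwyer}. What you have written is essentially Dwyer's original argument, specialised to $n=3$, adapted to the profinite/continuous setting, and with the signs matching the paper's conventions. The sign bookkeeping you flag does work out: writing $\varphi_{ij}=e_{ij}\circ\varphi$ and $a_{ij}:=-\varphi_{ij}$, the $(i,i+1)$ multiplicativity identity says $a_{i,i+1}$ is a continuous homomorphism, the $(i,i+2)$ identity rearranges to $\delta a_{i,i+2}=a_{i,i+1}\cup a_{i+1,i+2}=\bar a_{i,i+1}\cup a_{i+1,i+2}$ (since $\bar a=a$ in degree one), which is exactly Definition \ref{def:Massey_product}, and the $(1,4)$ identity rearranges to $\delta a_{14}=\bar a_{12}\cup a_{24}+\bar a_{13}\cup a_{34}$, which is the paper's chosen representative of $\langle a_1,a_2,a_3\rangle_M$. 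Two small points deserve an explicit sentence in a written-up version: first, multiplication in $\bU_4(\F_p)$ in the entries other than $(1,4)$ involves only those entries, so a continuous homomorphism $G\to\bU_4(\F_p)$ really is the same data as the five continuous cochains subject to the five identities (this justifies dropping only the $(1,4)$ relation and gives the bijection); second, since $U_4(\F_p)$ is finite and discrete, continuity of a lift is equivalent to continuity of its $(1,4)$-entry, so the equivalence ``lift exists iff the representative is a coboundary'' takes place entirely inside $\Cb(G,\F_p)$, which is exactly what vanishing of $\langle a_1,a_2,a_3\rangle_M$ in continuous cohomology $H^2(G,\F_p)$ means; one may also remark that $H^1(G,\F_p)$ coincides with the degree-one cocycles here, so prescribing $e_{i,i+1}\circ\brr_M=-a_i$ is consistent with the $a_{i,i+1}$ being cocycle representatives. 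With these remarks added, your proof is complete and self-contained, which is arguably an improvement over the paper's citation, since it verifies that Dwyer's discrete-group statement transfers verbatim to profinite groups and continuous cochains.
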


For later references, we formulate a particular consequence of Dwyer's theorem. 

\begin{corollary}[Dwyer]\label{cor:Dwyer_consequence} 
Let $G$ be a profinite group. 
Let $a_1, a_2, a_3 \in H^1(G,\F_p)$ such that $a_1 \cup a_2 = a_2 \cup a_3 = 0$. 
Let $\{a_{i,j}\}$ be a defining system for the triple Massey product $\langle a_1, a_2, a_3 \rangle$, 
and let $\brr \colon G \to \bU_4(\F_p)$ 
be the corresponding continuous group homomorphism. 
Then the cocycle $a_{1,2} \cup a_{2,4} + a_{1,3} \cup a_{3,4}$ is a coboundary 
if and only if $\brr$ extends to a continuous group homomorphism $\rr \colon G \to U_4(\F_p)$. 
\end{corollary}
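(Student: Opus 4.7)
The plan is to observe that the corollary is essentially a direct translation of Theorem \ref{thm:Dwyer_alt1} through the sign conventions in Definition \ref{def:Massey_product}. First I would recall that, by Definition \ref{def:Massey_product}, for a cochain $a$ of degree $d$ we set $\bar a = (-1)^{1+d}a$, so for degree one cochains $\bar a_{i,j} = a_{i,j}$. Consequently, given a defining system $M = \{a_{i,j}\}$ for the triple Massey product $\langle a_1, a_2, a_3\rangle$ of degree one classes, the cocycle
\begin{align*}
\bar a_{1,2} \cup a_{2,4} + \bar a_{1,3} \cup a_{3,4} = a_{1,2} \cup a_{2,4} + a_{1,3} \cup a_{3,4}
\end{align*}
represents the cohomology class $\langle a_1, a_2, a_3\rangle_M \in H^2(G,\F_p)$.

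Next I would invoke the one-one correspondence in Theorem \ref{thm:Dwyer_alt1} to identify the given defining system $M$ with the continuous homomorphism $\brr = \brr_M \colon G \to \bU_4(\F_p)$. Under this correspondence, the second assertion of Theorem \ref{thm:Dwyer_alt1} states that $\langle a_1, a_2, a_3\rangle_M = 0$ in $H^2(G,\F_p)$ if and only if $\brr_M$ admits a lift $\rr_M \colon G \to U_4(\F_p)$ along the central extension $0 \to Z_4(\F_p) \to U_4(\F_p) \to \bU_4(\F_p) \to 0$.

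To conclude, I would note that the cocycle $a_{1,2} \cup a_{2,4} + a_{1,3} \cup a_{3,4}$ is a coboundary precisely when its cohomology class in $H^2(G,\F_p)$ vanishes, and by the identification in the previous step this cohomology class is $\langle a_1, a_2, a_3\rangle_M$. Combining the two equivalences yields the claimed equivalence between the coboundary condition and the existence of the lift $\rr$. There is no genuine obstacle here: the only point requiring care is tracking the sign convention $\bar a = (-1)^{1+d}a$ in degree one, which reduces the statement to an immediate application of Theorem \ref{thm:Dwyer_alt1}.
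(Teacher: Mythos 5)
Your proof is correct and is exactly the intended argument: the paper states this corollary without proof precisely because, as you observe, for degree-one classes the sign convention makes $\bar a_{i,j}=a_{i,j}$, so the cocycle in question represents $\langle a_1,a_2,a_3\rangle_M$, and the equivalence is then an immediate restatement of Theorem \ref{thm:Dwyer_alt1}.
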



\begin{remark}\label{rem:unipotent3_and_cup_product}
A special case of Dwyer's result is the vanishing of the cup-product itself 
which we now explain for later reference. 
Let $G$ be a profinite group. 
Let $\chi_1, \chi_2 \in H^1(G,\F_p)$. 
%
Since both $\Zh^1$ and $H^1$ are given by the vector space of group homomorphisms $G \to \F_p$,  
we identify $\chi_1$ and $\chi_2$ with its cocycle representatives and will just write $\chi_{1}$ and $\chi_{2}$ for the corresponding cocycles. 
Then we have $\chi_1 \cup \chi_2 = 0$ if and only if there is a continuous group homomorphism 
$\varphi \colon G \to U_3(\F_p)$ such that 
$e_{1,2} \circ \varphi = - \chi_1$ and $e_{2,3} \circ \varphi = - \chi_2$.  
In particular, the continuous map $\eta \coloneqq e_{1,3} \circ \varphi \colon G \to \F_p$ is a cochain in $\Ch^1(G,\F_p)$ 
such that $\delta \eta = - \chi_1 \cup \chi_2$. 
\end{remark}


\begin{remark}\label{rem:A3_formal_Koszul_group_coh}
Let $G$ be a profinite group such that $\Hb(G,\F_p)=:\Hb$ is a Koszul algebra.  
Let $R \subset H^1 \ot H^1$ denote the relations in $\Hb$ 
such that $\Hb = T(H^1)/(R)$. 
We write $\Cb = \Cb(G,\F_p)$ and let $\Zh^{\bbb}$ denotes the cocycles in $\Cb$. 
%
%
We now describe how we can use Dwyer's Theorem to analyse the canonical class of $G$. 
By Proposition \ref{prop:canonical_class_via_Koszul_complex}, 
we need to find an $\F_p$-linear map $f_1 \colon H^1 \to \Zh^1$ which induces the identity on cohomology. 
Again, since both $\Zh^1$ and $H^1$ are given by the vector space of group homomorphisms $G \to \F_p$ 
we consider $f_1$ as an identification of $H^1$ with $\Zh^1$ and will omit it from the notation. 
Since $H^2 = (H^1 \ot H^1)/R$, we can find an $\F_p$-linear map $f_2 \colon R \to \Ch^1$ such that 
\[
\dd f_2(\chi_1 \ot \chi_2) = - \chi_1 \cup \chi_2
\] 
for $\chi_1 \ot \chi_2 \in R$. 
Let $\chi_1 \ot \chi_2 \ot \chi_3 \in K_3^3(\Hb) = (H^1 \otimes R) \cap (R \otimes H^1)$ be a decomposable tensor. 
Since $H^2 = (H^1 \ot H^1)/R$, 
the triple Massey product $\langle \chi_1, \chi_2, \chi_3 \rangle$ is defined, 
and 
\begin{align}\label{eq:defining_system_Dwyer_Koszul_group_cohom}
\{\chi_1, \chi_2, \chi_3, -f_2(\chi_1 \ot \chi_2), -f_2(\chi_2 \ot \chi_3)\}
\end{align}
is a defining system. 
Moreover, $\kappa_3 (\chi_1 \ot \chi_2 \ot \chi_3) \in H^2$ is given by the class of the cocycle  
\begin{align*}
\Psi_3(\chi_1 \ot \chi_2 \ot \chi_3) = - \chi_1  \cup f_2(\chi_2 \ot \chi_3) - f_2(\chi_1 \ot \chi_2) \cup \chi_3, 
\end{align*}
and we have 
$\kappa_3 (\chi_1 \ot \chi_2 \ot \chi_3) \in \langle \chi_1, \chi_2, \chi_3 \rangle$. 
By Dwyer's Theorem \ref{thm:Dwyer_alt1}, 
the defining system \eqref{eq:defining_system_Dwyer_Koszul_group_cohom} 
corresponds to the continuous group homomorphism 
\begin{align*}
\brr(\chi_1 \ot \chi_2 \ot \chi_3) \colon G \longrightarrow  \bU_4(\F_p) 
\end{align*}
given by 
\begin{align*}
\brr(\chi_1 \ot \chi_2 \ot \chi_3) =
\begin{pmatrix}
 1 & - \chi_1 & f_2(\chi_1 \ot \chi_2) &  * \\
  & 1& - \chi_2 & f_2(\chi_2 \ot \chi_3)  \\
  & & 1 & - \chi_3 \\
  & & & 1 
\end{pmatrix}. 
\end{align*} 
Thus, $\kappa_3(\chi_1 \ot \chi_2 \ot \chi_3)$ vanishes 
if and only if $\brr(\chi_1 \ot \chi_2 \ot \chi_3)$ can be extended to a continuous group homomorphism 
\begin{align*}
\rr(\chi_1 \ot \chi_2 \ot \chi_3) \colon G \to U_4(\F_p).
\end{align*} 
In particular, the continuous map $\vartheta \coloneqq - e_{1,4} \circ \varphi \colon G \to \F_p$ is a cochain in $\Ch^1(G,\F_p)$ 
such that $\delta \vartheta = \Psi_3(\chi_1 \ot \chi_2 \ot \chi_3)$. 
We will make frequent use of this observation in Sections \ref{sec:A3_formality_for_Demushkin_groups} and 
\ref{sec:proof_of_Dem_thm} by expressing elements of $K_3^3(\Hb)$ as linear combinations of suitable decomposable tensors 
to compute $\kappa_3$ on $K_3^3(\Hb)$. 
%
\end{remark}

\subsection{Demushkin groups are Koszul}\label{sec:Demushkin_groups_are_Koszul}

We now show that the cohomology algebra of a Demushkin group is $A_3$-formal. 
We will deduce this fact from a more general result on quadratic algebras which is well known 
and proven for example in \cite[Proposition 2.3 in Chapter 2 on page 24, and Proposition 4.2 of Chapter 6 on page 124]{ppqa}. 
We provide a proof for completeness and convenience of the reader.

Let $A = T(V)/(R)$ be a quadratic algebra over a field $\F$ with $\dim_{\F}A^1 = \dim_{\F} V = d$ and $\dim_{\F}A^2 = 1$. 
We will show that $A$ is a Koszul algebra. 
To do so, we recall that the Hilbert series of a graded vector space $\Vh$ is the formal power series given by 
\[
h_{\Vh}(z) = \sum_{n \in \Z} (\dim_{\F}\Vh^n) \cdot z^n.
\]
In particular, the Hilbert series of $A$ is $h_A(z) = 1+dz+z^2$. 

\begin{lemma}\label{lemma:dim_of_Knn}
Let $A = T(V)/(R)$ be a quadratic algebra with $\dim_{\F}A^1 = d$. 
The dimension $b_n \coloneqq \dim_{\F} K_n^n(A)$ is given by the recursive formula $b_0=1$, $b_1=d$, and $b_{n+1} = d \cdot b_n - b_{n-1}$ for $n\ge 1$.  
Therefore, the Hilbert series of $K_{\bbb}^{\bbb}(A)$ equals $(1 - dz + z^2)^{-1}$, 
i.e., $h_A(-z) \cdot h_{K_{\bbb}^{\bbb}(A)}(z) = 1$. 
\end{lemma}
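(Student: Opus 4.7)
My plan is to reduce the statement to a Hilbert-series computation for the Koszul dual algebra $A^! = T(V^*)/(R^\perp)$, where $R^\perp \subset V^* \otimes V^*$ denotes the annihilator of $R$. For any quadratic algebra there is a natural duality isomorphism $K_n^n(A) \cong ((A^!)^n)^*$ (Proposition 2.3 in Chapter 2 of \cite{ppqa}), so that $b_n = \dim_\F (A^!)^n$. The hypothesis $\dim_\F A^2 = 1$ forces $\dim_\F R = d^2 - 1$ and hence $\dim_\F R^\perp = 1$, making $A^! = T(V^*)/(\beta)$ a one-relator quadratic algebra for some $\beta \in V^* \otimes V^*$.

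The base cases $b_0 = 1$, $b_1 = d$, and $b_2 = \dim_\F R = d^2 - 1$ are immediate. For the recursion, the key input is the Hilbert series of $A^!$. The standing assumption $h_A(z) = 1 + dz + z^2$ gives $A^n = 0$ for $n \ge 3$, which dualizes to $\dim_\F K_n^n(A^!) = 0$ for $n \ge 3$. For $n = 3$ this reads $(V^* \otimes \F\beta) \cap (\F\beta \otimes V^*) = 0$ in $V^{*\otimes 3}$, and a short calculation in $V^* \otimes V^*$ shows that this intersection vanishes precisely when $\beta$ has full rank, i.e., when $\beta$ corresponds to a non-degenerate bilinear form on $V$.

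Under this non-degeneracy, the classical Hilbert-series computation for one-relator quadratic algebras (Proposition 4.2 in Chapter 6 of \cite{ppqa}) gives $h_{A^!}(z) = (1 - dz + z^2)^{-1}$. Translating through the duality yields $h_{K_{\bbb}^{\bbb}(A)}(z) = \sum_n b_n z^n = (1 - dz + z^2)^{-1}$; comparing coefficients produces the recursion $b_{n+1} = d b_n - b_{n-1}$, and the Hilbert-series identity $h_A(-z) \cdot h_{K_{\bbb}^{\bbb}(A)}(z) = 1$ is then immediate.

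The main obstacle is the Hilbert-series computation for $A^!$. The standard proof in \cite{ppqa} constructs an explicit PBW-basis for $A^!$, simultaneously establishing that $A^!$ is Koszul. A more elementary alternative is to build, by induction on $n$, a short exact sequence $0 \to K_n^n(A) \to V \otimes K_{n-1}^{n-1}(A) \xrightarrow{\phi_n} K_{n-2}^{n-2}(A) \to 0$ in which $\phi_n$ is induced by applying the multiplication $\mu \colon V \otimes V \to A^2 \cong \F$ to the first two tensor factors; surjectivity of $\phi_n$ is the delicate step and again relies on the non-degeneracy of $\mu$.
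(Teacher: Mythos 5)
Your route is genuinely different from the paper's. The paper stays inside $V^{\ot (n+1)}$: it writes $K_{n+1}^{n+1}(A)=(K_n^n(A)\ot V)\cap(V^{\ot n-1}\ot R)$, does an inclusion--exclusion dimension count, and identifies the quotient $\bigl((K_n^n(A)\ot V)+(V^{\ot n-1}\ot R)\bigr)/(V^{\ot n-1}\ot R)$ with $K_{n-1}^{n-1}(A)\ot\mathrm{span}_{\F}(\omega)$, which yields $b_{n+1}=db_n-b_{n-1}$ directly. You instead pass to the quadratic dual via $K_n^n(A)\cong((A^!)^n)^*$ and quote the Hilbert series of a one-relator quadratic algebra; this is closer to the alternative route through $(\Hb)^!$ that the paper only mentions in Remark \ref{rem:alternative_proof_that_Demushkin_are_Koszul}. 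Note also that your closing ``more elementary alternative'' (the sequence $0\to K_n^n(A)\to V\ot K_{n-1}^{n-1}(A)\to K_{n-2}^{n-2}(A)\to 0$) is in substance the paper's own proof: surjectivity of your $\phi_n$ is exactly the paper's identification of the quotient $Q$ with $K_{n-1}^{n-1}(A)\ot\mathrm{span}_{\F}(\omega)$, and you are right that this is the delicate point.

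There is, however, one incorrect step in your main argument: the claim that $(V^*\ot\F\beta)\cap(\F\beta\ot V^*)=0$ ``precisely when $\beta$ has full rank.'' Every element of that intersection has the form $\alpha\ot\beta=\beta\ot\gamma$, and a short computation shows a nonzero such element exists if and only if $\beta$ is proportional to $u\ot u$ for some $u\in V^*$; full rank is sufficient (for $d\ge 2$) but far from necessary. For instance, with $d=2$ and $\beta=y^*\ot x^*$ (rank one) the intersection vanishes; dually $R=\F\langle x\ot x,\,x\ot y,\,y\ot y\rangle$, where the pairing $V\ot V\to A^2$ is degenerate and yet the recursion of the lemma holds. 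This matters because you then invoke the one-relator Hilbert-series computation ``under this non-degeneracy'': non-degeneracy is neither what $K_3^3(A^!)=0$ gives you nor an available hypothesis here (the standing assumptions are only $\dim_\F A^1=d$, $\dim_\F A^2=1$ and $h_A(z)=1+dz+z^2$). The fix is easy and keeps your structure intact: the identity $h_{T(V^*)/(\beta)}(z)=(1-dz+z^2)^{-1}$ holds exactly when $\beta$ is not proportional to a square $u\ot u$, i.e.\ exactly when $(V^*\ot\F\beta)\cap(\F\beta\ot V^*)=0$, and that is precisely the condition your duality step extracts from $A^n=0$ for $n\ge 3$. Route the citation through this non-squareness condition (and check the quoted proposition is stated in that generality) rather than through non-degeneracy; with that change your proof goes through.
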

\begin{proof}
Let $\sum_{n \in \Z} b_n \cdot z^n$ be the formal powers series such that 
\[
\left( \sum_{n \in \Z} b_n \cdot z^n \right) \cdot (1 - dz + z^2)^{-1} = 1.
\]  
Then we have $b_n = 0$ for $n<0$, $b_0=1$, $b_1=d$ and, for all $n \ge 1$, 
\[
b_{n+1} z^{n+1} - (d z) \cdot (b_n \cdot z^{n}) + z^2 \cdot (b_{n-1} \cdot z^{n-1}) = 0. 
\] 
Thus, the coefficients are determined by the equation $b_{n+1} = d \cdot b_n - b_{n-1}$ for all $n \ge 1$. 
Since $\dim K_0^0(A) = 1$, $\dim K_1^1(A) = d$, and $\dim K_2^2(A) = \dim R = d^2-1$, 
it remains to prove the recursive formula for $\dim_{\F_p} K_n^n(A)$ to prove the lemma. 
%
For $n \ge 2$, we 
have $K_{n+1}^{n+1}(A) = (K_{n}^{n}(A) \ot V) \cap (V^{\ot n-1} \ot R)$.   
We then get 
\begin{align*}
\dim K_{n+1}^{n+1}(A) = & \dim (K_{n}^{n}(A) \ot V) + \dim (V^{\ot n-1} \ot R) \\
 & ~ ~ - \dim ((K_{n}^{n}(A) \ot V) \oplus (V^{\ot n-1} \ot R)).  
\end{align*}
Since $\dim (K_{n}^{n}(A) \ot V) = d \cdot \dim K_{n}^{n}(A)$, it suffices to determine the difference 
\[
\Delta \coloneqq \dim (V^{\ot n-1} \ot R) - \dim ((K_{n}^{n}(A) \ot V) \oplus (V^{\ot n-1} \ot R)).
\] 
The integer $\Delta$ is determined by how much the space $(K_{n}^{n}(A) \ot V) \oplus (V^{\ot n-1} \ot R)$ exceeds 
the space $V^{\ot n-1} \ot R$, i.e., 
it is given by the dimension of the quotient space 
$Q \coloneqq ((K_{n}^{n}(A) \ot V) \oplus (V^{\ot n-1} \ot R))/(V^{\ot n-1} \ot R)$. 
Now we use the assumption that $\dim A^2=1$ which means that we can choose a single element $\omega \in V \ot V$ 
whose image generates the one-dimensional quotient space $A^2 = (V \ot V)/R$. 
The space $Q$ is then isomorphic to 
\[
K_{n-1}^{n-1}(A) \ot \mathrm{span}_{\F} (\omega) = ((R \ot V^{\ot n-3}) \cap \cdots \cap (V^{\ot n-3} \ot R)) \ot \mathrm{span}_{\F}(\omega). 
\] 
%
In other words, we get $\dim Q = \dim K_{n-1}^{n-1}(A)$. 
This proves the recursive formula and the assertion of the lemma. 
\end{proof}


\begin{proposition}\label{prop:A_is_Koszul}
Let $A = T(V)/(R)$ be a quadratic algebra with 
$\dim_{\F}A^2 = 1$. 
Then $A$ is a Koszul algebra. 
\end{proposition}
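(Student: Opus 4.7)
My plan is to prove Koszulity of $A$ by verifying that the one-sided Koszul complex
\[
\cdots \to A \ot K_n^n(A) \to \cdots \to A \ot V \to A \to \F \to 0
\]
is exact, which, as noted in Remark \ref{rem:Koszul_equivalent_defs}, is equivalent to the Koszulity condition of Definition \ref{def:Koszulity}. The strategy is to decompose this complex by internal degree and verify exactness degree by degree, using the dimension recursion $b_{n+1} = d \cdot b_n - b_{n-1}$ for $b_n \coloneqq \dim_{\F} K_n^n(A)$ from Lemma \ref{lemma:dim_of_Knn}.

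The first step is to show $A^n = 0$ for every $n \ge 3$. An inclusion–exclusion count inside $V^{\ot 3}$ using $\dim_{\F}(V \ot R) = \dim_{\F}(R \ot V) = d(d^2-1)$ and $\dim_{\F}((V \ot R) \cap (R \ot V)) = b_3 = d^3 - 2d$ yields $\dim_{\F}(V \ot R + R \ot V) = d^3$, hence $\dim_{\F} A^3 = 0$; since $A$ is generated in degree $1$, an induction using $A^{n+1} = V \cdot A^n$ gives $A^n = 0$ for all $n \ge 3$. Consequently, in each internal degree $N \ge 3$ the Koszul complex collapses to a three-term sequence
\[
0 \to K_N^N(A) \to V \ot K_{N-1}^{N-1}(A) \to A^2 \ot K_{N-2}^{N-2}(A) \to 0
\]
with Euler characteristic $b_N - d \cdot b_{N-1} + b_{N-2} = 0$ by the recursion, while the cases $N \le 2$ reduce to direct inspection of the definitions of $R$ and $A^2$.

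The core of the argument is the exactness of this three-term sequence for $N \ge 3$. Injectivity of the left map is immediate from the intersection formula $K_N^N(A) = \bigcap_{k=0}^{N-2} V^{\ot k} \ot R \ot V^{\ot N-k-2}$, which forces $K_N^N(A) \subset V \ot K_{N-1}^{N-1}(A)$. For surjectivity of the right map, I will factor it through the quotient $V \ot V \ot K_{N-2}^{N-2}(A) \onto A^2 \ot K_{N-2}^{N-2}(A)$ with kernel $R \ot K_{N-2}^{N-2}(A)$, reducing surjectivity to the equality
\[
V \ot K_{N-1}^{N-1}(A) + R \ot K_{N-2}^{N-2}(A) = V \ot V \ot K_{N-2}^{N-2}(A).
\]
To prove this equality, I will first establish the lattice-style identity $(V \ot K_{N-1}^{N-1}(A)) \cap (R \ot K_{N-2}^{N-2}(A)) = K_N^N(A)$; an inclusion–exclusion count via the recursion of Lemma \ref{lemma:dim_of_Knn} then gives $\dim_{\F}(V \ot K_{N-1}^{N-1}(A) + R \ot K_{N-2}^{N-2}(A)) = d \cdot b_{N-1} + (d^2-1) b_{N-2} - b_N = d^2 b_{N-2}$, matching the ambient dimension. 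Combined with the vanishing Euler characteristic, a rank–nullity argument then forces middle exactness, finishing the verification that the Koszul complex is a resolution of $\F$.

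The main obstacle I anticipate is establishing the intersection identity $(V \ot K_{N-1}^{N-1}(A)) \cap (R \ot K_{N-2}^{N-2}(A)) = K_N^N(A)$: the containment $K_N^N(A) \subset V \ot K_{N-1}^{N-1}(A)$ is immediate, but $K_N^N(A) \subset R \ot K_{N-2}^{N-2}(A)$ requires writing elements in the unique form $\sum_\alpha r_\alpha \ot w_\alpha$ with the $r_\alpha$ ranging over a basis of $R$ and checking that the $w_\alpha$ land in $K_{N-2}^{N-2}(A)$, while the reverse inclusion uses that membership in $V \ot K_{N-1}^{N-1}(A)$ contributes the intersection conditions for $k = 1, \dots, N-2$ and membership in $R \ot K_{N-2}^{N-2}(A)$ those for $k = 0, 2, \dots, N-2$, together covering all $k \in \{0, \dots, N-2\}$ and so recovering $K_N^N(A)$; once this identity is in hand, everything reduces to the dimension recursion from the preceding lemma.
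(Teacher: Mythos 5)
Your proposal is correct, and its skeleton is the paper's: both arguments deduce Koszulity from exactness of the one-sided complex $A\ot K_{\bbb}^{\bbb}(A)\to\F$, using the dimension recursion of Lemma \ref{lemma:dim_of_Knn}, and then pass to Definition \ref{def:Koszulity} via standard equivalences (you through Remark \ref{rem:Koszul_equivalent_defs}, the paper through van den Bergh's bimodule statement and the two-out-of-three property). The genuine difference is how exactness is obtained. The paper asserts $h_A(z)=1+dz+z^2$ without argument and then states in one line that the Hilbert series identity forces exactness; for a general quadratic algebra the numerical identity alone does not imply exactness of the Koszul complex, so this step tacitly relies on the special degree structure of $A$. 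You make that reliance explicit and thereby fill in real content: you derive $A^3=0$ (hence $A^n=0$ for $n\ge 3$) from $b_3=d^3-2d$ by inclusion--exclusion, observe that each internal-degree strand is then the three-term sequence $0\to K_N^N(A)\to V\ot K_{N-1}^{N-1}(A)\to A^2\ot K_{N-2}^{N-2}(A)\to 0$, and get exactness from injectivity of the inclusion, surjectivity via $V\ot K_{N-1}^{N-1}(A)+R\ot K_{N-2}^{N-2}(A)=V^{\ot 2}\ot K_{N-2}^{N-2}(A)$, and the vanishing Euler characteristic $b_N-db_{N-1}+b_{N-2}=0$. What your route buys is a self-contained verification of precisely the implication the paper compresses (including a proof of the unproved claim $h_A(z)=1+dz+z^2$ from the lemma); what it costs is length, since the paper instead outsources the passage to Definition \ref{def:Koszulity} to the literature. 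One reassurance: the ``main obstacle'' you anticipate is not one. Since tensoring with a fixed space commutes with intersections of subspaces over a field, $V\ot K_{N-1}^{N-1}(A)=\bigcap_{k=1}^{N-2}V^{\ot k}\ot R\ot V^{\ot N-2-k}$ and $R\ot K_{N-2}^{N-2}(A)=(R\ot V^{\ot N-2})\cap\bigcap_{k=2}^{N-2}V^{\ot k}\ot R\ot V^{\ot N-2-k}$, so their intersection is $K_N^N(A)$ directly from Definition \ref{def:Koszulcomplex}; in particular the inclusion $K_N^N(A)\subset R\ot K_{N-2}^{N-2}(A)$ requires no choice of basis of $R$.
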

\begin{proof}
By Lemma \ref{lemma:dim_of_Knn}, we have $h_A(-z) \cdot h_{K_{\bbb}^{\bbb}(A)}(z) = 1$. 
This implies that the sequence 
\begin{align*}
\cdots \to A \ot K_{n+1}^{n+1}(A) \to A \ot K_{n}^{n}(A) \to A \ot K_{n-1}^{n-1}(A) \to \cdots \to A \to \F \to 0
\end{align*}
is exact. 
Thus, $A \ot K_{\bbb}^{\bbb}(A)$ provides a free resolution of $\F$. 
This implies that $K(A)$ is a minimal free resolution of $A$ as an $A$-bimodule 
by \cite[Proposition 3.1]{vandenbergh}, 
where we note that, by \cite[Section 2.8]{BGS}, the complex $K'(A)$ in \cite{vandenbergh} is isomorphic to the complex we denote by $K(A)$. 
The two-out-of-three property of quasi-isomorphisms then implies that $A$ is Koszul as defined in Definition \ref{def:Koszulity} 
\end{proof}

We already know that the only finite Demushkin group $\Z/2\Z$ is Koszul by Example \ref{example:finite_Demushkin_is_A3_formal}. 
The infinite case now follows from Proposition \ref{prop:A_is_Koszul} and Definition \ref{def:Demushkin}. 

\begin{corollary}\label{cor:Demushkin_groups_are_Koszul}
Let $G$ be a pro-$p$ Demushkin group. 
Then the cohomology algebra $\Hb(G,\F_p)$ is a Koszul algebra. \qed
\end{corollary}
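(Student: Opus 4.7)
The plan is to apply Proposition \ref{prop:A_is_Koszul}, treating the unique finite Demushkin group separately. For $G = \Z/2\Z$, which is the only finite pro-$p$ Demushkin group, Example \ref{example:finite_Demushkin_is_A3_formal} already records that $\Hb(\Z/2\Z, \F_2) = \F_2[x]$ is Koszul.

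For an infinite pro-$p$ Demushkin group $G$, I would reduce to Proposition \ref{prop:A_is_Koszul} by verifying that $\Hb(G, \F_p)$ is a quadratic algebra with $\dim_{\F_p} H^2(G, \F_p) = 1$. The dimensional condition is part~(2) of Definition \ref{def:Demushkin}. To establish quadraticity, I would set $d = \dim_{\F_p} H^1(G, \F_p)$, form $R := \ker(H^1(G, \F_p) \otimes H^1(G, \F_p) \xrightarrow{\cup} H^2(G, \F_p))$, and compare the natural surjection $\tau \colon T(H^1(G, \F_p))/(R) \twoheadrightarrow \Hb(G, \F_p)$ degree by degree. In degrees $0$, $1$, and $2$, the map $\tau$ is an isomorphism by construction, where surjectivity in degree $2$ is forced by the non-degeneracy of the cup product in condition~(3) of Definition \ref{def:Demushkin}. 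In degrees $\ge 3$, both sides vanish: the right-hand side because Demushkin groups are Poincar\'e duality groups of cohomological dimension two, and the left-hand side by Lemma \ref{lemma:dim_of_Knn}, which forces $\dim (T(H^1)/(R))^3 = 0$ when $\dim R = d^2 - 1$, followed by the elementary observation that if $A^{n-1} = 0$ for a connected graded algebra generated in degree one, then $A^n = 0$ as well.

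With quadraticity in hand, Proposition \ref{prop:A_is_Koszul} applies directly to $\Hb(G, \F_p)$ and yields its Koszulity. I do not foresee any serious obstacle, since the analytic content has been concentrated upstream in Lemma \ref{lemma:dim_of_Knn} and Proposition \ref{prop:A_is_Koszul}; the remaining work is essentially the bookkeeping needed to read off quadraticity of $\Hb(G, \F_p)$ from Definition \ref{def:Demushkin} together with the vanishing of $H^i(G, \F_p)$ for $i \ge 3$.
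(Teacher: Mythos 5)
Your overall route is the same as the paper's: dispose of the finite case $\Z/2\Z$ via Example \ref{example:finite_Demushkin_is_A3_formal}, and reduce the infinite case to Proposition \ref{prop:A_is_Koszul} using condition (2) of Definition \ref{def:Demushkin}. The one step that does not hold up as written is your justification that $\bigl(T(H^1)/(R)\bigr)^3=0$, which is exactly what quadraticity of $\Hb(G,\F_p)$ requires. Lemma \ref{lemma:dim_of_Knn} computes the dimensions of the spaces $K_n^n(A)$, not of the components $A^n$, so it says nothing about the degree-three part of $T(H^1)/(R)$. Moreover, the purely numerical statement you extract from it --- that $\dim R = d^2-1$ together with $\dim A^2=1$ forces $(T(V)/(R))^3=0$ --- is false: take $V=\F_p\langle x,y\rangle$ and $R=\F_p\langle x\ot x,\ x\ot y,\ y\ot x\rangle$, so that $\dim R=3=d^2-1$ and $\dim (V\ot V)/R=1$; then $y\ot y\ot y\notin R\ot V+V\ot R$, hence $A^3\neq 0$. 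What excludes this for Demushkin groups is precisely condition (3) of Definition \ref{def:Demushkin}: in the example the induced pairing on $V$ is degenerate, so non-degeneracy of the cup product is the ingredient your argument must use but currently does not.

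Two standard ways to close the gap: (i) quote the explicit presentation $\Hb(G,\F_p)\cong T(H^1)/(R)$ from [NSW, Proposition 3.9.13], which is what the paper itself invokes in Section \ref{sec:Demushkin_canonical_class}; or (ii) argue directly that non-degeneracy with $d=\dim H^1\ge 2$ (automatic for infinite Demushkin groups) forces $R\ot H^1+H^1\ot R=(H^1)^{\ot 3}$: dually, $R^{\perp}\subset (H^1)^*\ot(H^1)^*$ is spanned by a single tensor of rank $d\ge 2$, from which one checks $(R^{\perp}\ot (H^1)^*)\cap((H^1)^*\ot R^{\perp})=0$. Once quadraticity is secured in either way, the remainder of your argument coincides with the paper's proof, which is exactly the two-step reduction you describe.
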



\begin{remark}\label{rem:alternative_proof_that_Demushkin_are_Koszul}
Corollary \ref{cor:Demushkin_groups_are_Koszul} also follows from \cite[Theorem 5.2]{MPQT} where a stronger result is proven. 
The proof of \cite[Theorem 5.2]{MPQT} uses the fact that the dual algebra $(\Hb)^! = T((H^1)^*)/(R^\perp)$ 
of $\Hb = \Hb(G,\F_p)$ 
is a quadratic algebra which satisfies the assumption of \cite[Lemma 2.15]{MPQT}, 
i.e., $\chi_1^* \ot \chi_1^* \notin R^{\perp}$ while $\chi^*_1 \ot \chi_2^* \in R^{\perp}$, 
where $\chi_i^*$ denotes basis vectors of $(H^1)^*$ which are dual to the  $\chi_i$. 
This implies that $(\Hb)^!$ has a Poincar\'e--Birkhoff--Witt basis which is known to imply that an algebra is Koszul. 
Then it remains to use the general fact that a locally finite quadratic algebra is Koszul if and only if its quadratic dual is Koszul. 
\end{remark}


\section{$A_3$-formality for Demushkin groups at odd primes}\label{sec:A3_formality_for_Demushkin_groups}

In this section, we discuss $A_3$-formality for pro-$p$ Demushkin groups. 
%


\subsection{Demushkin groups with $q$-invariant $q \ne 2,3$ are $A_3$-formal}\label{sec:Demushkin_q_ge_5}

First we prove $A_3$-formality for Demushkin groups with two generators. 
Even though we generalise the following result to any even number of generators, 
we prove a special case first since it demonstrates the main idea of the argument. 

\begin{theorem}\label{thm:Demushkin_two_generators}
Let $p$ be an odd prime number and let $q = p^f$ with $f \ge1$ and $f\ge 2$ if $p=3$, or $q = 0$. 
Let $G$ be the pro-$p$ group generated by elements $x_1$ and $x_2$ and the single relation 
$x_1^q [x_1,x_2]=1$. 
Then $G$ is $A_3$-formal. 
\end{theorem}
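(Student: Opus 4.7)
Since $\Hb(G,\F_p)$ is Koszul by Corollary \ref{cor:Demushkin_groups_are_Koszul}, Proposition \ref{prop:canonical_class_via_Koszul_complex} identifies the canonical class $\gamma_G$ with the Koszul representative $\kappa_3 \colon K_3^3(\Hb(G,\F_p)) \to H^2(G,\F_p)$, and by Theorem \ref{thm:existence_of_obstruction_class} it suffices to show that $\kappa_3$ is a coboundary in the complex $\uHom_{\F_p}(K_\bullet^\bullet, \Hb(G,\F_p)[-1])$. For $d=2$ and $p$ odd, $\Hb(G,\F_p)$ is the exterior algebra on the dual basis $\chi_1,\chi_2 \in H^1$, with quadratic relations $R = \F_p\langle \chi_1\ot\chi_1,\chi_2\ot\chi_2,\chi_{12}\rangle$ where $\chi_{12} := \chi_1\ot\chi_2+\chi_2\ot\chi_1$. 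By Lemma \ref{lemma:dim_of_Knn}, the space $K_3^3$ is four-dimensional, with basis $\chi_1^{\ot 3}$, $\chi_2^{\ot 3}$, and the two mixed symmetrisations $\omega_1 := \chi_1\ot\chi_1\ot\chi_2 + \chi_1\ot\chi_2\ot\chi_1 + \chi_2\ot\chi_1\ot\chi_1$ and $\omega_2$ obtained by swapping $\chi_1$ and $\chi_2$; these are not decomposable tensors.

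For the decomposable basis elements $\chi_i^{\ot 3}$, I would apply Remark \ref{rem:A3_formal_Koszul_group_coh}: fixing a cochain $u_i = f_2(\chi_i\ot\chi_i)$ with $\delta u_i = -\chi_i\cup\chi_i$, the class $\kappa_3(\chi_i^{\ot 3})$ vanishes if and only if the corresponding continuous homomorphism $\bar\rho \colon G \to \bU_4(\F_p)$ lifts to $U_4(\F_p)$. Using the presentation $G=\langle x_1,x_2 \mid x_1^q [x_1,x_2]\rangle$, such a lift amounts to finding lifts $B_1,B_2\in U_4(\F_p)$ of $\bar\rho(x_1),\bar\rho(x_2)$ satisfying $B_1^q[B_1,B_2]=I$. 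Writing $B_k = I+N_k$ with $N_k^4=0$, one has
\[
B_k^q = I + qN_k + \binom{q}{2}N_k^2 + \binom{q}{3}N_k^3,
\]
and under the hypothesis ($q=0$ or $q=p^f$ with $(p,f)\ne(3,1)$), Kummer's theorem gives $q \equiv \binom{q}{2} \equiv \binom{q}{3} \equiv 0 \pmod{p}$. Hence $B_k^q = I$ for any lift, and a direct $4\times 4$ matrix computation then verifies $[B_1,B_2]=I$ for the specific $B_1,B_2$ arising from the defining system associated to $\chi_i^{\ot 3}$, yielding $\kappa_3(\chi_i^{\ot 3})=0$.

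For the remaining basis elements $\omega_j$, I would show that $\kappa_3$, viewed as a cochain in $\uHom_{\F_p}(K_3^3,H^2) \cong \F_p^4$, lies in the image of the Koszul coboundary $\partial \colon \uHom_{\F_p}(K_2^2,H^1) \to \uHom_{\F_p}(K_3^3,H^2)$. A map $\eta \colon R \to H^1$ is determined by six scalars in $\F_p$, namely the coordinates of $\eta(\chi_i^{\ot 2})$ and $\eta(\chi_{12})$ in the basis $\{\chi_1,\chi_2\}$. Using the graded commutativity relations $\chi_i^2 = 0$ and $\chi_1\chi_2 = -\chi_2\chi_1$ in $\Hb(G,\F_p)$, a short calculation shows that the constraints $\partial \eta(\chi_i^{\ot 3}) = 0 = \kappa_3(\chi_i^{\ot 3})$ force $\eta(\chi_i^{\ot 2}) \in \F_p\cdot\chi_i$ (for $p$ odd, where $2$ is invertible), after which the two remaining values $\partial\eta(\omega_1), \partial\eta(\omega_2) \in H^2 \cong \F_p\cdot(\chi_1\chi_2)$ can be independently prescribed by adjusting $\eta(\chi_{12})$ and the residual scalars. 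This realises $\kappa_3$ as a Koszul coboundary, giving $[\kappa_3]=0$ and hence $A_3$-formality. The main obstacle is the explicit matrix computation verifying $[B_1,B_2]=I$; the condition $q\ne 3$ at $p=3$ enters precisely through the non-vanishing of $\binom{3}{3}$ modulo $3$, which obstructs the identity $B_k^q = I$ in the excluded case and foreshadows the $A_3$-non-formality established in Theorem \ref{thm:Demushkin_p_equal_3}.
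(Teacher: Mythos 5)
Your reduction to the Koszul representative $\kappa_3$, and your treatment of the diagonal elements $\chi_i^{\ot 3}$ via Dwyer's theorem together with the congruences $q\equiv\binom{q}{2}\equiv\binom{q}{3}\equiv 0 \pmod p$, agree with the paper's proof. The genuine gap is in your treatment of the two mixed symmetrisations $\omega_1,\omega_2$. You propose to realise the (possibly nonzero) values $\kappa_3(\omega_j)$ as a Koszul coboundary $\partial\eta$ for a suitable $\eta\colon R\to H^1$, claiming that $\partial\eta(\omega_1)$ and $\partial\eta(\omega_2)$ can be prescribed independently. This fails: for the exterior algebra on two generators in degree one the differential $\partial_2\colon \uHom_{\F_p}(K_2^2,H^1)\to\uHom_{\F_p}(K_3^3,H^2)$ is identically zero. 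Writing $\eta(\chi_1\ot\chi_1)=a_1\chi_1+a_2\chi_2$ and $\eta(\chi_1\ot\chi_2+\chi_2\ot\chi_1)=c_1\chi_1+c_2\chi_2$, one computes, with $\omega:=\chi_1\cup\chi_2$,
\[
\partial\eta(\omega_1) \;=\; -\bigl(c_2\omega - a_1\omega\bigr) - \bigl(a_1\omega - c_2\omega\bigr) \;=\; 0,
\]
and similarly $\partial\eta(\omega_2)=0$; the vanishing of $\partial\eta$ on $\chi_i^{\ot 3}$ is likewise automatic for every $\eta$ (compare Lemma \ref{lemma:chito3_is_not_hit}), so it does not force $\eta(\chi_i^{\ot 2})\in\F_p\cdot\chi_i$ as you assert. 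This is exactly the computation in Example \ref{example:exterior_algebra_on_2_gen}, which shows that $\partial_2$ is trivial and $\HH^{3,-1}(\Hb(G,\F_p))\cong\F_p^4$. Consequently the canonical class vanishes if and only if the cochain $\kappa_3$ vanishes on all four basis vectors of $K_3^3$; no choice of $\eta$ can absorb nonzero values on $\omega_1,\omega_2$, so these must be shown to vanish directly.

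To close the gap you need the step you omitted: first fix $f_2$ also on the mixed relation $\chi_1\ot\chi_2+\chi_2\ot\chi_1$ (the paper does this via a homomorphism $G\to U_3(\F_p)$ sending both generators to the same unipotent matrix, so that $\kappa_3(\omega_j)$ is even defined by your defining systems), and then, using that $p$ is odd, expand $(\chi_1\pm\chi_2)^{\ot 3}$ in the basis to reduce the vanishing on $\omega_1,\omega_2$ to the vanishing of $\kappa_3$ on $(\chi_1+\chi_2)^{\ot 3}$ and $(\chi_1-\chi_2)^{\ot 3}$. The latter is proved by exhibiting homomorphisms $\rho_\pm\colon G\to U_4(\F_p)$ with $\rho_\pm(x_1)=B_+$ and $\rho_\pm(x_2)=B_\pm$, which requires verifying the full relation $B_+^q[B_+,B_\pm]=I_4$, not merely $B^q=I_4$ for a single matrix. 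This is precisely where the defining relation of $G$ (and not just the binomial condition on $q$) enters the argument; your sketch never checks any relation involving a nontrivial commutator of two distinct matrices.
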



\begin{remark}\label{rem:Positselski_example_as_semidirect_product}
A group with the presentation as in Theorem \ref{thm:Demushkin_two_generators} can  be realised as follows. 
Let $\car \colon \Z_p \to 1 + q\Z_p$ be a cyclotomic character on $\Z_p$ with 
$q = p^f$. 
Then $G \coloneqq \Z_p \rtimes_{\car} \Z_p$ is a Demushkin group generated by $x_1$ and $x_2$ subject to the single relation 
$x_1^q [x_1,x_2]=1$. 
\end{remark}

\begin{remark}
Let $G$ be as in Theorem \ref{thm:Demushkin_two_generators} and Remark \ref{rem:Positselski_example_as_semidirect_product}. 
We note that the cohomology algebra $\Hb(G,\F_p)$ is an exterior $\F_p$-algebra with two generators. 
It therefore follows from Example \ref{example:exterior_algebra_on_2_gen} 
and Remark \ref{rem:dgas_with_exterior_algebra_as_cohomology} 
that $\HH^{3,-1}(\Hb(G,\F_p))$ is non-trivial. 
In particular, pro-$p$ Demushkin groups are not intrinsically $A_3$-formal in general. 
\end{remark}


\begin{notn}
For matrices $M$ and $N$ in $U_n(\F_p)$, we write $[M,N] \coloneqq M^{-1}N^{-1}MN$. 
\end{notn}


In the following proofs and constructions we will frequently use the following observation, often without explicitly mentioning it: 

\begin{lemma}\label{lemma:matrix_relations}
Let $M_1, \ldots, M_d \in U_n(\F_p)$ be a sequence of matrices and let $I_n \in U_n(\F_p)$ denote the identity matrix. 
If 
\begin{align*}
M_1^q[M_1,M_2] [M_3,M_4] \cdots [M_{d-1},M_d] = I_n ~ \text{in} ~ U_n(\F_p), 
\end{align*} 
then the assignment $\rr \colon x_i \mapsto M_i$ for $x_1,\ldots,x_d$ 
defines a continuous group homomorphism $\rr \colon G \to U_n(\F_p)$. 
\end{lemma}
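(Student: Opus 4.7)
The plan is to invoke the universal property of the pro-$p$ presentation of $G$. Let $F = F(x_1,\ldots,x_d)$ denote the free pro-$p$ group on $d$ generators, which we also denote by $x_1,\ldots,x_d$ by abuse of notation. Since $U_n(\F_p)$ is a finite $p$-group (of order $p^{n(n-1)/2}$), the universal property of the free pro-$p$ group ensures that the set-theoretic assignment $x_i \mapsto M_i$ extends uniquely to a continuous group homomorphism $\tilde{\rho} \colon F \to U_n(\F_p)$.

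Next, I would verify that $\tilde{\rho}$ factors through $G$. Let
\[
r \coloneqq x_1^q [x_1,x_2] [x_3,x_4] \cdots [x_{d-1},x_d] \in F
\]
denote the Demushkin relator. Since $\tilde{\rho}$ is a group homomorphism and $\tilde{\rho}(x_i) = M_i$, we obtain
\[
\tilde{\rho}(r) = M_1^q [M_1,M_2] [M_3,M_4] \cdots [M_{d-1},M_d] = I_n
\]
by the hypothesis of the lemma. Hence the closed normal subgroup $\langle\!\langle r \rangle\!\rangle \subseteq F$ generated by $r$ lies in the kernel of $\tilde{\rho}$.

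Since $G$ is presented as the pro-$p$ group $F/\langle\!\langle r \rangle\!\rangle$, the map $\tilde{\rho}$ descends to a continuous group homomorphism $\rho \colon G \to U_n(\F_p)$ with $\rho(x_i) = M_i$ for all $i$, as required. There is no genuine obstacle: the lemma is essentially a bookkeeping device that records the universal property of the pro-$p$ presentation of $G$ in the form in which it will be used repeatedly in the subsequent constructions (in combination with Dwyer's criterion).
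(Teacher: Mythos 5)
Your proposal is correct and is exactly the intended argument: the paper's proof is a one-line appeal to "the defining relations for $G$ and the fact that $U_n(\F_p)$ is a finite $p$-group," and your write-up simply spells out that appeal via the universal property of the free pro-$p$ group and factoring through the closed normal subgroup generated by the relator. No discrepancy to report.
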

\begin{proof}
This follows directly from the defining relations for $G$ and the fact that $U_n(\F_p)$ is a finite $p$-group. 
\end{proof}


\begin{proof}[Proof of Theorem \ref{thm:Demushkin_two_generators}]
%
Let $\Cb = \Cb(G,\F_p)$ denote the complex of continuous inhomogeneous cochains, let $\Zh^\bbb$ denote the cocycles, 
and let $\Hb = \Hb(G,\F_p)$ denote the corresponding cohomology algebra. 
We consider $\Ch^n$ and $\Zh^n$ as $\F_p$-vector spaces with addition and scalar multiplication defined pointwise. 
%
%
We know that $\Hb$ is a Koszul algebra by Corollary \ref{cor:Demushkin_groups_are_Koszul}. 
Hence we can use Proposition \ref{prop:canonical_class_via_Koszul_complex} to construct the canonical class of $G$. 

Let $\chi_1, \chi_2 \colon G \to \F_p$ be a basis of $H^1$ such that $\chi_i(x_j) = - \delta_{ij}$, where $\delta_{ij}$ denotes the Kronecker symbol. 
The minus sign is chosen so that we minimise the number of signs on the forthcoming formulas. 
We have $\Hb = T(H^1)/(R)$ 
where $R \subset H^1 \ot H^1$ is the $\F_p$-vector subspace 
\begin{align*}
R = \F_p \langle \chi_1 \ot \chi_1, \chi_2 \ot \chi_2, \chi_1 \ot \chi_2 + \chi_2 \ot \chi_1 \rangle 
\end{align*}
and $H^2 = \F_p \langle \chi_1 \cup \chi_2 \rangle$. 
The vector space $K^3_3(\Hb) = (H^1\ot R) \cap (R\ot H^1)$ is then given by 
$\F_p$-linear span 
\begin{align*}
K^3_3(\Hb) = \F_p \langle & \chi_1 \ot \chi_1 \ot \chi_1, \chi_2 \ot \chi_2 \ot \chi_2, \\
 & ~ \chi_2 \ot \chi_1 \ot \chi_1 + \chi_1 \ot \chi_2 \ot \chi_1 + \chi_1 \ot \chi_1 \ot \chi_2, \\
& ~ \chi_1 \ot \chi_2 \ot \chi_2 + \chi_2 \ot \chi_1 \ot \chi_2 + \chi_2 \ot \chi_2 \ot \chi_1 \rangle.
\end{align*}
Now we choose $\F_p$-linear maps $f_1 \colon H^1 \to \Zh^1$ and $f_2 \colon R \to \Ch^1$ to construct the canonical class of $G$.  
Since both $\Zh^1$ and $H^1$ are given by the vector space of group homomorphisms $G \to \F_p$,  
we consider $f_1$ as an identification of $H^1$ with $\Zh^1$ and will omit it from the notation. 
We define the $\F_p$-linear map $f_2 \colon R \to \Ch^1$ as follows. 
Let $A$ denote the matrix 
\begin{align*}
A = \begin{pmatrix} 
1 & 1 & 0  \\
0 & 1 & 1  \\
0 &  0 & 1  \\
\end{pmatrix} 
~ \text{with} ~ 
A^n = \begin{pmatrix} 
1 & n & \binom{n}{2} \\
0 & 1 & n  \\
0 &  0 & 1  
\end{pmatrix} 
~ \text{for} ~ n \ge 0,
\end{align*}
where $\binom{n}{k}$ denotes the binomial coefficient and we set $\binom{n}{k}=0$ when $n<k$. 
In particular, we have $A^q = I_3 \in U_3(\F_p)$ for $q= p^f$ as $p$ is odd,   
and $[A^n,A^m]=I_3$ for all $n,m$.   
%
By Lemma \ref{lemma:matrix_relations}, 
we can define a continuous group homomorphism $\varphi \colon G \to U_3(\F_p)$  by setting $\varphi(x_j) = A$ for $j=1,2$. 
Recall that, for a matrix $M$, we let $e_{ij}(M)$ denote the entry in position $(i,j)$ in $M$. 
%
By Remark \ref{rem:unipotent3_and_cup_product}, 
the continuous map $\eta \colon G \to \F_p$ defined by $\eta(g) \coloneqq e_{13}(\varphi(g))$ is then a cochain in $\Ch^1$ such that 
\begin{align*}
\delta \eta = - (\chi_1 + \chi_2) \cup (\chi_1 + \chi_2) 
= - \chi_1 \cup \chi_1 - \chi_2 \cup \chi_2 - (\chi_1 \cup \chi_2 + \chi_2 \cup \chi_1).
\end{align*}
For $i=1,2$, by Lemma \ref{lemma:matrix_relations}, 
we define a continuous group homomorphism $\varphi_i \colon G \to U_3(\F_p)$ 
by setting $\varphi(x_j) = A^{\delta_{ij}}$. 
Again by Remark \ref{rem:unipotent3_and_cup_product}, 
the continuous map $\eta_i \colon G \to \F_p$, for $i=1,2$, 
defined by $g \mapsto e_{13}(\varphi_i(g))$ is then a cochain in $\Ch^1$ such that 
\begin{align*}
\delta \eta_i = - \chi_i \cup \chi_i. 
\end{align*}
We define the $\F_p$-linear map $f_2 \colon R \to \Ch^1$ on the basis element $\chi_i \cup \chi_i$ to be the continuous map $G \to \F_p$ 
given by 
\begin{align*}
f_2(\chi_i \ot \chi_i) \coloneqq \eta_i 
\end{align*}
for $i=1,2$. 
Using the $\F_p$-vector space structure on $\Ch^1$, we then set  
\begin{align*}
f_2(\chi_1 \ot \chi_2 + \chi_2 \ot \chi_1) \coloneqq \eta - \eta_1 -\eta_2. 
\end{align*}
The map \eqref{eq:def_of_Psi3_construction} then becomes the map $\Psi_3 \colon K_3^3(\Hb) \to \Zh^2$ given by  
\begin{align*}
\Psi_3(\chi_a, \chi_b, \chi_c) = - \chi_a \cup f_2(\chi_b \ot \chi_c) - f_2(\chi_a \ot \chi_b) \cup \chi_c. 
\end{align*}
Taking the cohomology class induces the map $\kappa_3 \colon K^3_3(\Hb) \to H^2$ 
which represents the canonical class of $G$.

Now we use Dwyer's Theorem \ref{thm:Dwyer_alt1} 
and Remark \ref{rem:A3_formal_Koszul_group_coh} to show that the map $\kappa_3$ 
is trivial.  
%
To do so, it suffices to show that $\kappa_3$ vanishes on each basis element of $K^3_3(\Hb)$. 
We note that 
\begin{align*}
(\chi_1 + \chi_2)^{\ot 3} & = \chi_1^{\ot 3} + \chi_2^{\ot 3} +  (\chi_2 \ot \chi_1 \ot \chi_1 + \chi_1 \ot \chi_2 \ot \chi_1 + \chi_1 \ot \chi_1 \ot \chi_2) \\
& + (\chi_1 \ot \chi_2 \ot \chi_2 + \chi_2 \ot \chi_1 \ot \chi_2 + \chi_2 \ot \chi_2 \ot \chi_1),
\end{align*}
and 
\begin{align*}
(\chi_1 - \chi_2)^{\ot 3} & = \chi_1^{\ot 3} - \chi_2^{\ot 3} - (\chi_2 \ot \chi_1 \ot \chi_1 + \chi_1 \ot \chi_2 \ot \chi_1 + \chi_1 \ot \chi_1 \ot \chi_2) \\
& + (\chi_1 \ot \chi_2 \ot \chi_2 + \chi_2 \ot \chi_1 \ot \chi_2 + \chi_2 \ot \chi_2 \ot \chi_1). 
\end{align*}
Since $p$ is odd, it therefore suffices to show that $\kappa_3$ vanishes 
on the elements  
$\chi_1^{\ot 3}$, $\chi_2^{\ot 3}$, $(\chi_1 + \chi_2)^{\ot 3}$, and $(\chi_1 - \chi_2)^{\ot 3}$.

Let $B_+$ denote the matrix 
\begin{align}\label{eq:def_of_matrix_B_+}
B_+ = \begin{pmatrix} 
1 & 1 & 0 & 0 \\
0 & 1 & 1 & 0 \\
0 &  0 & 1 & 1 \\
0 & 0 & 0 & 1 
\end{pmatrix} 
~ \text{with} ~ 
B_+^n = \begin{pmatrix} 
1 & n & \binom{n}{2} & \binom{n}{3} \\
0 & 1 & n & \binom{n}{2} \\
0 &  0 & 1 & n \\
0 & 0 & 0 & 1 
\end{pmatrix} 
~ \text{for} ~ n \ge 0
\end{align}
where we set $\binom{n}{k}=0$ whenever $n<k$. 
In particular, we have $B_+^q = I_4 \in U_4(\F_p)$ for $q= p^f$ 
since $p$ is odd and $f \ge 2$ if $p=3$, 
and $[B_+,B_+]=I_4$. 
%
By Lemma \ref{lemma:matrix_relations}, 
we can therefore, for $i=1,2$, 
define a continuous group homomorphism $\rr_i \colon G \to U_4(\F_p)$ 
by setting $\rr_i(x_j) = B_+^{\delta_{ij}}$ for $j=1,2$. 
By Dwyer's Theorem \ref{thm:Dwyer_alt1}, 
the homomorphism $\rr_i$ corresponds to a defining system of the triple Massey product $\langle \chi_i, \chi_i, \chi_i \rangle$. 
Thus, by Remark \ref{rem:A3_formal_Koszul_group_coh} and the construction of $f_2$, 
the continuous map $\vartheta_i \colon G \to \F_p$ defined by $\vartheta_i(g) \coloneqq - e_{14}(\rr_i(g))$ is a cochain in $\Ch^1$ 
which witnesses 
the vanishing of the triple Massey product $\langle \chi_i, \chi_i, \chi_i \rangle$, i.e., 
such that 
\begin{align*}
\delta \vartheta_i = \psi_3(\chi_i^{\ot 3}) 
\end{align*}
for $i=1,2$. 
This shows that $\kappa_3(\chi_i^{\ot 3}) =0$ for $i=1,2$.

Now we define a continuous group homomorphism $\rr_+ \colon G \to U_4(\F_p)$ 
by setting $\rr_+(x_j) = B_+$ for $j=1,2$. 
By Dwyer's Theorem \ref{thm:Dwyer_alt1}, 
the homomorphism $\rr_+$ corresponds to a defining system of the triple Massey product 
\[
\langle \chi_1 + \chi_2, \chi_1 + \chi_2, \chi_1 + \chi_2 \rangle.
\]
Moreover, by Remark \ref{rem:A3_formal_Koszul_group_coh}, 
the continuous map $\vartheta_+ \colon G \to \F_p$ defined by $\vartheta_+(g) \coloneqq - e_{14}(\rr_+(g))$ is a cochain in $\Ch^1$ 
such that 
\begin{align*}
\delta \vartheta_+ = \Psi_3 ((\chi_1 + \chi_2)^{\ot 3}). 
\end{align*}
This shows $\kappa_3((\chi_1 + \chi_2)^{\ot 3} = 0$.  

Now let $B_-$ denote the matrix 
\begin{align*}
B_- = \begin{pmatrix} 
1 & -1 & 0 & 0 \\
0 & 1 & -1 & 0 \\
0 &  0 & 1 & -1 \\
0 & 0 & 0 & 1 
\end{pmatrix}. 
\end{align*}
We have  
$[B_+, B_-] =I_4$, 
and hence the relation $B_+^q[B_+,B_-]=I_4$ holds in $U_4(\F_p)$. 
%
%
We can therefore define a continuous group homomorphism $\rr_- \colon G \to U_4(\F_p)$ 
by setting $\rr_-(x_1) = B_+$ and $\rr_-(x_2) = B_-$. 
By Dwyer's Theorem \ref{thm:Dwyer_alt1}, 
the homomorphism $\rr_-$ corresponds to a defining system of the triple Massey product $\langle \chi_1 - \chi_2, \chi_1 - \chi_2, \chi_1 - \chi_2 \rangle$. 
Moreover, by Remark \ref{rem:A3_formal_Koszul_group_coh}, 
the continuous map $\vartheta_- \colon G \to \F_p$ defined by $\vartheta_-(g) \coloneqq - e_{14}(\rr_-(g))$ is a cochain in $\Ch^1$ 
such that 
\begin{align*}
\delta \vartheta_- = \Psi_3 ((\chi_1 - \chi_2)^{\ot 3} ). 
\end{align*}
This shows $\kappa_3 ((\chi_1 - \chi_2)^{\ot 3})=0$.  
This proves that the canonical class of $G$ vanishes and proves the theorem. 
\end{proof}


In fact, we can generalise Theorem \ref{thm:Demushkin_two_generators} and show that all other Demushkin pro-$p$ groups at odd primes are $A_3$-formal.

\begin{theorem}\label{thm:Demushkin_groups_are_formal}
Let $p$ be an odd prime number, let $d\ge 2$ be an even number, 
and let $q = p^f$ with $f \ge 1$ and $f \ge 2$ if $p = 3$, or $q = 0$.  
Let $G$ be a pro-$p$ group with minimal set of generators $\{x_1,\ldots,x_d\}$ 
satisfying the single relation 
\begin{align*}
1 = x_1^q [ x_1,x_2] [x_3,x_4] \cdots [x_{d-1}, x_d].
\end{align*}
Then the canonical class of $G$ vanishes and $G$ is $A_3$-formal. 
\end{theorem}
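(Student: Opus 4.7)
The plan is to generalise the strategy of Theorem~\ref{thm:Demushkin_two_generators} to arbitrary even $d$. Since $\Hb := \Hb(G,\F_p)$ is Koszul by Corollary~\ref{cor:Demushkin_groups_are_Koszul}, Proposition~\ref{prop:canonical_class_via_Koszul_complex} together with Remark~\ref{rem:Koszul_restricted_f3_suffices} reduces the statement to exhibiting $\F_p$-linear maps $f_1 \colon H^1 \to \Zh^1$ and $f_2 \colon K_2^2(\Hb) \to \Ch^1(G,\F_p)$ for which the map $\Psi_3 = \ma_2(f_1 \ot f_2 - f_2 \ot f_1)$ of \eqref{eq:def_of_Psi3_construction} takes values in the coboundaries of $\Cb(G,\F_p)$. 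Let $\chi_1,\ldots,\chi_d \in H^1$ be dual to $x_1,\ldots,x_d$; since $p$ is odd we have $\chi_i\cup\chi_i = 0$, and the non-degeneracy of the cup product pins down an explicit basis of the relation space $R \subset H^1\ot H^1$ consisting of the diagonal tensors $\chi_i\ot\chi_i$, the cross-block tensors $\chi_i\ot\chi_j$ with $\{i,j\}$ distinct from every commutator block $\{2k-1,2k\}$, the graded-commutativity tensors $\chi_{2k-1}\ot\chi_{2k}+\chi_{2k}\ot\chi_{2k-1}$, and the block-difference tensors $\chi_{2k-1}\ot\chi_{2k}-\chi_{2l-1}\ot\chi_{2l}$.

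For each such generator of $R$ I would define the corresponding value of $f_2$ as $e_{13}\circ\varphi$ for a continuous homomorphism $\varphi\colon G\to U_3(\F_p)$ produced by Lemma~\ref{lemma:matrix_relations}, mimicking the two-generator argument: the matrices $\varphi(x_i)$ are chosen inside a single commuting Toeplitz subfamily of $U_3(\F_p)$, so that all commutators in the Demushkin relation become trivial, and $\varphi(x_1)^q = I_3$ because $p$ is odd (and $f\ge 2$ if $p=3$). With such an $f_2$ in hand, the canonical class $\kappa_3(\chi_a\ot\chi_b\ot\chi_c)$ on a decomposable element of $K_3^3(\Hb)$ is shown to vanish by constructing a continuous lift $\rr\colon G\to U_4(\F_p)$ whose superdiagonals $e_{i,i+1}\circ\rr$ equal $-\chi_a,-\chi_b,-\chi_c$ and whose $(1,3)$ and $(2,4)$ entries agree with the prescribed components of $f_2$; Dwyer's Theorem~\ref{thm:Dwyer_alt1} and Remark~\ref{rem:A3_formal_Koszul_group_coh} then furnish a cochain $\vartheta := -e_{14}\circ\rr$ satisfying $\delta\vartheta = \Psi_3(\chi_a\ot\chi_b\ot\chi_c)$. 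By linearity of $\kappa_3$ it suffices to carry out this construction on a spanning set of $K_3^3(\Hb)$.

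For symmetric-cube elements $\chi^{\ot 3}$ the lift $\rr$ is built exactly as in the two-generator case: take $\rr(x_i) = B_+^{\alpha_i}B_-^{\beta_i}$ for the matrices $B_\pm$ of \eqref{eq:def_of_matrix_B_+} lying in an abelian Toeplitz subgroup of $U_4(\F_p)$ and satisfying $B_\pm^q = I_4$ thanks to $q\ne 3$, so that the Demushkin relation is automatically respected. The difficulty for $d \ge 4$ is that the symmetric cubes no longer span $K_3^3(\Hb)$; the remaining spanning elements are mixed decomposable tensors whose three superdiagonal assignments $-\chi_a,-\chi_b,-\chi_c$ are supported on distinct commutator blocks of the presentation, which forces $\rr(x_i)$ out of any common Toeplitz subgroup and makes the commutators $[\rr(x_{2k-1}),\rr(x_{2k})]$ genuinely non-trivial. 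Preserving $\rr(x_1)^q\prod_k [\rr(x_{2k-1}),\rr(x_{2k})] = I_4$ while matching the prescribed $(1,3)$ and $(2,4)$ entries then becomes a system of explicit equations in the $(1,3),(2,4),(1,4)$-entries of the remaining matrices.

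I expect this last step to be the main obstacle, and I would resolve it by expressing the Demushkin constraint as a list of polynomial identities in the matrix entries: the contribution from $\rr(x_1)^q$ reduces to multiples of binomial coefficients $\binom{q}{k}$ for $k\le 3$, all of which vanish modulo $p$ under $q \ne 3$, while the $(1,4)$-components of the pair commutators contribute definite cocycles that can be absorbed into the freely adjustable $(1,4)$-entries of the individual $\rr(x_i)$. This produces, for each mixed spanning element of $K_3^3(\Hb)$, a continuous homomorphism $\rr \colon G \to U_4(\F_p)$ of the required form, and combined with the symmetric-cube case shows that $\Psi_3$ takes values in the coboundaries of $\Cb(G,\F_p)$ on a full spanning set of $K_3^3(\Hb)$; by Remark~\ref{rem:Koszul_restricted_f3_suffices} this gives $\ga = 0$, and the theorem follows from Theorem~\ref{thm:existence_of_obstruction_class}.
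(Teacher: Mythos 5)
Your reduction to showing that $\Psi_3$ takes values in coboundaries, and your treatment of the symmetric cubes $\chi^{\ot 3}$ via commuting unipotent matrices, match the paper's strategy. But the step you yourself flag as the main obstacle is where the argument breaks, and your proposed fix cannot work. For the mixed elements whose entries are spread over two distinct commutator blocks (the elements the paper collects in the set $\DDDwl$, e.g.\ $\chi_1\ot(\chi_1\ot\chi_2+\chi_{2i}\ot\chi_{2i-1})$), the relevant homomorphism $\brr\colon G\to \bU_4(\F_p)$ does \emph{not} lift to $U_4(\F_p)$: the paper computes a defect $2\ne 0$ in the $(1,4)$-corner (Lemma \ref{lemma:kappa_3_does_not_vanish}), so with any $f_2$ built from $U_3$-valued homomorphisms as you describe, $\kappa_3$ is genuinely nonzero on these elements. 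Your mechanism for absorbing the defect --- adjusting the ``freely adjustable $(1,4)$-entries of the individual $\rr(x_i)$'' --- is impossible: the $(1,4)$-entries are central in $U_4(\F_p)$, hence cancel out of every commutator $[\rr(x_{2k-1}),\rr(x_{2k})]$, and they contribute $q\cdot(\text{entry})\equiv 0 \bmod p$ to $\rr(x_1)^q$. So changing them does not change whether the Demushkin relation holds in $U_4(\F_p)$; this invariance is exactly what makes Dwyer's lifting obstruction well defined, and it is the same phenomenon that drives the $q=3$ counterexample in Theorem \ref{thm:Demushkin_p_equal_3}.

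What is missing is the final, and in the paper the longest, step: accepting that the cocycle $\kappa_3$ is nonzero as a map and proving instead that it is a Hochschild \emph{coboundary}, i.e.\ exhibiting an $\F_p$-linear map $\pam\colon R\to H^1$ with $\partial\pam=\kappa_3$ (Lemma \ref{lemma:kappa_3_is_hit}), after first pinning down the relations among the nonzero values of $\kappa_3$ on $\DDDwl$ (Lemma \ref{lemma:kappa_3_does_not_vanish}) and its vanishing on $\SSS$, $\DDD$, $\TTT$, $\DDDwr$ (Lemma \ref{lemma:kappa_3_vanishes_on_many_subsets}). Equivalently, one must correct the defining systems not in the central $(1,4)$-slot but in the $(1,3)$ and $(2,4)$ slots by adding genuine cocycles (homomorphisms $G\to\F_p$), as illustrated in Remark \ref{rem:p3_Demushkin_triple_Massey_still_vanishes}; this changes $\kappa_3$ precisely by $\partial\pam$. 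Your proposal, as written, would conclude that $\kappa_3$ itself vanishes on a spanning set, which is false for the construction you set up, so the argument has a genuine gap at its decisive point.
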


The case $q=3$ will be discussed in Section \ref{sec:Demushkin_q=3}. 
Our proof of Theorem \ref{thm:Demushkin_groups_are_formal} is based on a direct but rather long computation of the canonical class of $G$.  
We construct the canonical class for all $q \ne 2$ in Section \ref{sec:Demushkin_canonical_class} below. 
We provide the proof of Theorem \ref{thm:Demushkin_groups_are_formal} 
in Section \ref{sec:proof_of_Dem_thm}.


\subsection{The canonical class of a Demushkin group}\label{sec:Demushkin_canonical_class}

We will now construct the canonical class of pro-$p$ Demushkin groups with an arbitrary even number of generators. 
Let $d \ge 2$ be an even number. 
We assume that $p$ is an odd prime number and $q = p^f$ with $f\ge 1$, or $q = 0$. 
Let $G$ be a pro-$p$ group with minimal set of generators $\{x_1,\ldots,x_d\}$ 
satisfying the single relation 
\begin{align*}
1 = x_1^q [ x_1,x_2] [x_3,x_4] \cdots [x_{d-1}, x_d].
\end{align*}
Such a group $G$ is a Demushkin group which is completely characterised by the invariants $d$ and $q$, see \cite{Demushkin}, \cite{Labute} and \cite{SerreDem}. 

Let $\Cb = \Cb(G,\F_p)$ denote the complex of continuous inhomogeneous cochains, let $\Zh^\bbb = \Zh^\bbb(G,\F_p)$ denote the cocycles, 
and let $\Hb = \Hb(G,\F_p)$ denote the corresponding cohomology algebra. 
%
Let $\{\chi_1,\ldots,\chi_d\}$ be an $\F_p$-basis of $H^1$  
such that $\chi_i(x_j) = - \delta_{ij}$, where $\delta_{ij}$ denotes the Kronecker symbol. 
The $\F_p$-vector space $H^2$ is then generated by the single element $\chi_1 \cup \chi_2$. 
We have $\Hb = T(H^1)/(R)$ 
where, by \cite[Proposition 3.9.13]{NSW} (see also \cite[page 27]{MPQT}), $R \subset H^1 \ot H^1$ is the $\F_p$-vector subspace 
$R = \mathrm{span}(\BBB)$ where $\BBB$ is the set 
\begin{align*}
\BBB = & \{ 
\chi_i \ot \chi_j ~ \text{for all} ~ 1 \le i, j \le d  ~ 
 ~ \text{with}~ \begin{cases}
i \ne j+1 &  \text{if} ~ j ~ \text{is odd} \\
i \ne j-1 &  \text{if} ~ j ~ \text{is even}
\end{cases} \\
& ~ \chi_{2i-1} \ot \chi_{2i} + \chi_{2i} \ot \chi_{2i-1}, ~ \text{for} ~ 1\le i \le d/2, \\
& ~ \chi_{1} \ot \chi_{2} + \chi_{2k} \ot \chi_{2k-1} ~ \text{for} ~ 2 \le k \le d/2
\}.
\end{align*}


\begin{example}
For $d=4$, for example, this gives 
\begin{align*}
R = \F_p \langle & \chi_1 \ot \chi_1, \chi_2 \ot \chi_2, \chi_3 \ot \chi_3, \chi_4 \ot \chi_4, \\
& \chi_1 \ot \chi_3, \chi_1 \ot \chi_4, \chi_2 \ot \chi_3, \chi_2 \ot \chi_4, 
 \chi_3 \ot \chi_1, \chi_3 \ot \chi_2, \chi_4 \ot \chi_1, \chi_4 \ot \chi_2 \\
& \chi_{1} \ot \chi_{2} + \chi_{2} \ot \chi_{1}, \chi_{3} \ot \chi_{4} + \chi_{4} \ot \chi_{3}, 
\chi_{1} \ot \chi_{2} + \chi_{4} \ot \chi_{3}
\rangle.  
\end{align*}
\end{example}


\begin{remark}
The set $\BBB$ is clearly linearly independent and generates $R$ 
since it consists of $d^2-1 = \dim (H^1 \ot H^1) -1$ elements as required. 
Note that we could have chosen other basis elements to present the relations in $H^2$.  
For example, we have the relation 
\begin{align*}
\chi_{2} \ot \chi_{1} + \chi_{2i-1} \ot \chi_{2i} = & (\chi_{2} \ot \chi_{1} + \chi_{1} \ot \chi_{2}) + (\chi_{2i-1} \ot \chi_{2i} + \chi_{2i} \ot \chi_{2i-1}) \\
& ~~ - (\chi_{1} \ot \chi_{2} + \chi_{2i} \ot \chi_{2i-1}) \in R
\end{align*}
which we will use later. 
\end{remark}


We will now construct the canonical class of $G$.  
We choose $f_1$ to be the identity $H^1 \to  \Zh^1 = \Hom(G,\F_p)$ and omit it in the notation, 
where $\Hom(G,\F_p)$ denotes the $\F_p$-vector space of continuous group homomorphisms. 
We construct the $\F_p$-linear map $f_2 \colon R \to \Ch^1$  by defining it on each element of $\BBB$ and then extend it $\F_p$-linearly.

Let $A_{1,0}$, $A_{0,1}$, and $A_{1,1}$ denote the following matrices: 
\begin{align*}
A_{1,0} = \begin{pmatrix} 
1 & 1 & 0  \\
0 & 1 & 0  \\
0 &  0 & 1  \\
\end{pmatrix}, ~
A_{0,1} = \begin{pmatrix} 
1 & 0 & 0  \\
0 & 1 & 1  \\
0 &  0 & 1  \\
\end{pmatrix} 
~ \text{and} ~ 
A_{1,1} = \begin{pmatrix} 
1 & 1 & 0  \\
0 & 1 & 1  \\
0 &  0 & 1  \\
\end{pmatrix}. 
\end{align*}
%
For $n\ge 0$, we have 
\begin{align*}
A^{n_{01}}_{0,1} \cdot A^{n_{10}}_{1,0} &= \begin{pmatrix} 
1 & n_{10} & 0 \\
0 & 1 & n_{01}  \\
0 &  0 & 1  \\
\end{pmatrix}, 
A_{1,1}^n = \begin{pmatrix} 
1 & n & \binom{n}{2} \\
0 & 1 & n  \\
0 &  0 & 1  
\end{pmatrix}, \\
A^{n_{10}}_{1,0} \cdot A^{n_{01}}_{0,1} & = \begin{pmatrix} 
1 & n_{10} & n_{10} n_{01} \\
0 & 1 & n_{01}  \\
0 &  0 & 1  \\
\end{pmatrix},  
\end{align*}
where $\binom{n}{2}$ denotes the binomial coefficient with $\binom{n}{2}=0$ for $n=0,1$.  
In particular, we have $A_{1,0}^p = A_{0,1}^p = I_3 \in U_3(\F_p)$ since $p$ is odd. 
For $i \ne j+1$ if $j$ is odd and $i\ne j-1$ if $j$ is even, 
we can therefore define a continuous group homomorphism $\varphi_{ij} \colon G \to U_3(\F_p)$  by setting 
$\varphi_{ij}(x_i) = A_{1,0}$, $\varphi_{ij}(x_j) = A_{01}$, and $\varphi_{ij}(x_k) = I_3$ for $k\neq i,j$. 
By Remark \ref{rem:unipotent3_and_cup_product},   
the continuous map $\eta_{ij} \colon G \to \F_p$ defined by $g \mapsto \eta_{ij}(g) \coloneqq e_{13}(\varphi_{ij}(g))$ is then a cochain in $\Ch^1$ such that 
\begin{align*}
\delta \eta_{ij} = - \chi_i \cup \chi_j.
\end{align*}
We define the $\F_p$-linear map $f_2 \colon R \to \Ch^1$ on the basis element $\chi_i \cup \chi_j$ to be the continuous map $G \to \F_p$ 
given by 
\begin{align*}
f_2(\chi_i \ot \chi_j) \coloneqq \eta_{ij}. 
\end{align*}

\begin{rem}
Note that the above formula does not work for the cup-product $\chi_1 \cup \chi_2$ or any $\chi_{2i-1} \cup \chi_{2i}$. 
The difference is that the commutator relation is a non-trivial condition. 
However, since $A_{1,0}^q=I_3$ and $A_{10}$ and $A_{01}$ do not commute, we have 
\[
A_{1,0}^q[A_{1,0},A_{0,1}] \neq I_3.
\]  
Thus, we do not get a group homomorphism $G \to U_3(\F_p)$  by setting 
$\varphi(x_{2i-1}) = A_{1,0}$, $\varphi(x_{2i}) = A_{0,1}$, $\varphi(x_j) = I_3$ for $j \ne 2i-1,2i$.  
\end{rem}

We have $A_{1,1}^p = I_3 \in U_3(\F_p)$ since $p$ is odd,  
and $[A_{1,1}^n,A_{1,1}^m]=I_3$ for all $n,m$.   
We define a continuous group homomorphism $\varphi_{ii} \colon G \to U_3(\F_p)$ 
by setting $\varphi_{ii}(x_j) = A_{1,1}^{\delta_{ij}}$. 
The continuous map $\eta_{ii} \colon G \to \F_p$ defined by $g \mapsto e_{13}(\varphi_{ii}(g))$ is then a cochain in $\Ch^1$ such that 
\begin{align*}
\delta \eta_{ii} = - \chi_i \cup \chi_i.
\end{align*}
%
We define the $\F_p$-linear map $f_2 \colon R \to \Ch^1$ on the basis element $\chi_i \cup \chi_i$ to be the continuous map $G \to \F_p$ 
given by 
\begin{align*}
f_2(\chi_i \ot \chi_i) \coloneqq \eta_{ii}. 
\end{align*}

Now we define a continuous group homomorphism $\varphi^{\oddeven}_{i} \colon G \to U_3(\F_p)$, where the superscript $\mathrm{oe}$ stands for odd/even, 
by setting 
$\varphi^{\oddeven}_{i}(x_{2i-1}) =  \varphi^{\oddeven}_{i}(x_{2i}) = A_{1,1}$ and $\varphi^{\oddeven}_{i}(x_j) = I_3$ for $j \neq 2i-1, 2i$. 
By Remark \ref{rem:unipotent3_and_cup_product}, 
the continuous map $\eta^{\oddeven}_{i} \colon G \to \F_p$ defined by $\eta^{\oddeven}_{i}(g) \coloneqq e_{13}(\varphi^{\oddeven}_{i}(g))$ is then a cochain in $\Ch^1$ such that 
\begin{align*}
\delta \eta^{\oddeven}_{i} = - (\chi_{2i-1} + \chi_{2i}) \cup (\chi_{2i-1} + \chi_{2i}) 
= & ~ - \chi_{2i-1} \cup \chi_{2i-1} - \chi_{2i} \cup \chi_{2i} \\
& ~ - (\chi_{2i-1} \cup \chi_{2i} + \chi_{2i} \cup \chi_{2i-1}).
\end{align*}
We then define $f_2(\chi_{2i-1} \ot \chi_{2i} + \chi_{2i} \ot \chi_{2i-1})$ such that 
\begin{align*}
& ~ f_2((\chi_{2i-1} + \chi_{2i}) \ot (\chi_{2i-1} \ot \chi_{2i})) \\
= & ~ f_2(\chi_{2i-1} \ot \chi_{2i-1}) + f_2(\chi_{2i} \ot \chi_{2i}) + 
f_2(\chi_{2i-1} \ot \chi_{2i} + \chi_{2i} \ot \chi_{2i-1}).  
\end{align*}
That is, using the $\F_p$-vector space structure on $\Ch^1$, we set  
\begin{align*}
f_2(\chi_{2i-1} \ot \chi_{2i} + \chi_{2i} \ot \chi_{2i-1}) \coloneqq \eta^{\oddeven}_{i} - \eta_{2i-1,2i-1} -\eta_{2i,2i}. 
\end{align*}
%

%
Finally, we define a homomorphism $\varphi^{\oddeven}_{1,k} \colon G \to U_4(\F_p)$ by setting 
\begin{align*}
\varphi^{\oddeven}_{1,k}(x_1) = A_{1,0}, ~ 
\varphi^{\oddeven}_{1,k}(x_2) = A_{0,1}, ~  
\varphi^{\oddeven}_{1,k}(x_{2k-1}) = A_{0,1}, ~ 
\varphi^{\oddeven}_{1,k}(x_{2k}) = A_{1,0},
\end{align*} 
and $\varphi^{\oddeven}_{1,k}(x_j) = I_3$ for $j \ne 1,2,2k-1,2k$.  
By Remark \ref{rem:unipotent3_and_cup_product}, 
the continuous map $\eta^{\oddeven}_{1,k} \colon G \to \F_p$ defined by $\eta^{\oddeven}_{1,k}(g) \coloneqq e_{13}(\varphi(g))$ is a cochain in $\Ch^1$ such that 
\begin{align*}
\delta \eta^{\oddeven}_{1,k} = - (\chi_1 + \chi_{2k}) \cup (\chi_2 + \chi_{2k-1}). 
\end{align*}
We then define $f_2(\chi_{1} \ot \chi_{2} + \chi_{2k} \ot \chi_{2k-1})$ such that 
\begin{align*}
& ~ f_2((\chi_1 + \chi_{2k}) \ot (\chi_2 + \chi_{2k-1})) \\
= & ~ f_2(\chi_{1} \ot \chi_{2k-1}) + f_2(\chi_{2k} \ot \chi_{2}) + 
f_2(\chi_{1} \ot \chi_{2} + \chi_{2k} \ot \chi_{2k-1}).  
\end{align*}
That is, using the $\F_p$-vector space structure on $\Ch^1$, we set  
\begin{align*}
f_2(\chi_1 \ot \chi_2 +  \chi_{2k} \ot \chi_{2k-1}) \coloneqq \eta^{\oddeven}_{1,k} - \eta_{1,2k-1} - \eta_{2k,2}. 
\end{align*}
We now define the map $f_2 \colon R \to \Ch^1$ on all of $R$ by extending it $\F_p$-linearly from $\BBB$ to $R$. 
From Proposition \ref{prop:canonical_class_via_Koszul_complex} and Corollary \ref{cor:Demushkin_groups_are_Koszul} 
we then deduce: 

\begin{prop}\label{prop:canonical_class_of_Demushkin_group_construction} 
With the above notation, we define the map $\Psi_3 \colon K_3^3(\Hb) \to \Zh^2$ by  
\begin{align}\label{eq:def_of_Psi3_Demushkin}
\Psi_3(\chi_a, \chi_b, \chi_c) = - \chi_a \cup f_2(\chi_b \ot \chi_c) - f_2(\chi_a \ot \chi_b) \cup \chi_c. 
\end{align}
Taking the cohomology class of $\Psi_3$ defines an $\F_p$-linear map $\kappa_3 \colon K^3_3(\Hb) \to H^2$. 
Then $\kappa_3$ is a cocycle in the complex $(\uHom_{\F}(K_{\bbb}^{\bbb}(\Hb), \Hb[-1]), \dee)$ which computes the Hochschild cohomology of $\Hb$, 
and the class of $\kappa_3$ in $\HH^{3,-1}(\Hb)$ is the canonical class of $G$. \qed
\end{prop}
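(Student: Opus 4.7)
The plan is to invoke Proposition \ref{prop:canonical_class_via_Koszul_complex}. Since $\Hb$ is a Koszul algebra by Corollary \ref{cor:Demushkin_groups_are_Koszul}, the canonical class $\gag$ will be represented by $\kappa_3$ as in \eqref{eq:def_of_kappa3_construction} for any pair $(f_1, f_2)$ consisting of an $\F_p$-linear map $f_1 \colon H^1 \to \Zh^1$ lifting the identity on $H^1$ and an $\F_p$-linear map $f_2 \colon K_2^2(\Hb) = R \to \Ch^1$ satisfying $\delta \circ f_2 = - \ma_2 \circ (f_1 \otimes f_1)$. The hypothesis on $f_1$ is immediate: $H^1$ and $\Zh^1$ both equal $\Hom(G, \F_p)$, and $f_1$ is taken to be this canonical identification. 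The task therefore reduces to verifying that the $f_2$ constructed above satisfies the required coboundary identity on each element of the basis $\BBB$, which by $\F_p$-linearity will then imply it on all of $R$.

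The common mechanism for the verification is Remark \ref{rem:unipotent3_and_cup_product}: for any continuous group homomorphism $\varphi \colon G \to U_3(\F_p)$ with $e_{1,2}\circ\varphi = -\chi$ and $e_{2,3}\circ\varphi = -\chi'$, the continuous map $e_{1,3}\circ\varphi$ is a cochain in $\Ch^1$ whose coboundary equals $-\chi \cup \chi'$. I will therefore check, one family at a time, that each of the prescriptions $\varphi_{ij}$, $\varphi_{ii}$, $\varphi^{\oddeven}_i$, and $\varphi^{\oddeven}_{1,k}$ on the generators $x_1, \ldots, x_d$ actually extends to a continuous homomorphism $G \to U_3(\F_p)$. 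By Lemma \ref{lemma:matrix_relations}, this amounts to verifying in $U_3(\F_p)$ the single Demushkin relation
\[
\varphi(x_1)^q [\varphi(x_1), \varphi(x_2)] [\varphi(x_3), \varphi(x_4)] \cdots [\varphi(x_{d-1}), \varphi(x_d)] = I_3.
\]

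The key observations for these checks are: (i) $A_{1,0}^q = A_{0,1}^q = A_{1,1}^q = I_3$ since $p$ is odd (or $q = 0$), which handles $\varphi_{ij}$, $\varphi_{ii}$, and $\varphi^{\oddeven}_i$ with $i = 1$, where the image of the relation either contains only one nontrivial $q$-th power or only identity commutators; (ii) in $\varphi^{\oddeven}_i$ the two generators involved in the $i$-th commutator are both sent to $A_{1,1}$, so that commutator is $I_3$; (iii) in $\varphi^{\oddeven}_{1,k}$ with $k \ge 2$ the image of the relation reads $A_{1,0}^q [A_{1,0}, A_{0,1}] [A_{0,1}, A_{1,0}] = I_3 \cdot [A_{1,0}, A_{0,1}] [A_{1,0}, A_{0,1}]^{-1} = I_3$. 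The definitions of $f_2$ on the two sum basis elements of $\BBB$ are then arranged precisely so that the $\F_p$-linear combination of the $\eta$-cochains produces the negative of the cup-product of the corresponding linear combination of characters.

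With these hypotheses verified, Proposition \ref{prop:canonical_class_via_Koszul_complex} yields simultaneously that $\Psi_3$ in \eqref{eq:def_of_Psi3_Demushkin} takes values in $\Zh^2$, that $\kappa_3$ is a cocycle in $(\uHom_{\F}(\Kbb(\Hb), \Hb[-1]), \dee)$, and that $[\kappa_3] = \gag$ in $\HH^{3,-1}(\Hb)$. There is no real conceptual obstacle; the main work is the bookkeeping of the four families of matrix assignments and checking the Demushkin relation in $U_3(\F_p)$ for each, which is precisely where the assumption that $p$ is odd (so that $A_{1,0}^q = A_{0,1}^q = A_{1,1}^q = I_3$) enters crucially.
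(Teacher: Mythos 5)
Your proposal is correct and follows essentially the same route as the paper: since $\Hb$ is Koszul (Corollary \ref{cor:Demushkin_groups_are_Koszul}), the statement reduces via Proposition \ref{prop:canonical_class_via_Koszul_complex} to checking that the chosen $(f_1,f_2)$ satisfy $\ddA f_2=-\ma_2(f_1\ot f_1)$ on the basis $\BBB$, which the paper, like you, verifies through Remark \ref{rem:unipotent3_and_cup_product} and Lemma \ref{lemma:matrix_relations} by confirming the Demushkin relation in $U_3(\F_p)$ for each family $\varphi_{ij},\varphi_{ii},\varphi^{\oddeven}_i,\varphi^{\oddeven}_{1,k}$ (including the cancellation $[A_{1,0},A_{0,1}][A_{0,1},A_{1,0}]=I_3$ for $\varphi^{\oddeven}_{1,k}$). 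The only cosmetic remark is that oddness of $p$ is really only needed for $A_{1,1}^q=I_3$ (via $\binom{q}{2}\equiv 0$), not for $A_{1,0}$ or $A_{0,1}$, but this does not affect the argument.
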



\subsection{Demushkin groups with invariant $q=3$ are not $A_3$-formal}\label{sec:Demushkin_q=3}

Our goal now is to compute the canonical class for all pro-$p$ Demushkin groups for odd primes $p$. 
We begin by showing that pro-$3$ Demushkin groups with invariant $q=3$ 
are not $A_3$-formal. 
Let $\chi_1, \ldots, \chi_d \in H^1$ be as above. 
We note that $\chi_1^{\ot 3}$ is an element in $K^3_3(\Hb)$ 
and refer to Lemma \ref{lemma:Demushkin_K_3} for a complete basis of $K^3_3(\Hb)$.  

\begin{lemma}\label{lemma:chito3_is_not_hit}
Let $\pam \colon R \to H^1$ be an $\F_p$-linear map which we consider as a cochain in the complex $(\uHom_{\F}(K_{\bbb}^{\bbb}(\Hb), \Hb[-1]), \dee)$. 
Then $\partial \pam (\chi_1^{\ot 3}) = 0$. 
\end{lemma}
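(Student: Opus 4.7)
The plan is to compute $\partial\sigma(\chi_1^{\ot 3})$ directly from the definition of the Hochschild differential and observe that every summand that appears vanishes in $H^2$ thanks to the explicit description of the relations $R$ given above.

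First I would unwind the formula for $\partial_2$ on the cochain $\sigma \in \uHom_{\F}(K_2^2(\Hb), \Hb[-1]^2) = \uHom_{\F}(R, H^1)$. Since $|\chi_1|=1$ and $s=-1$, the signs in the definition of $\partial_n$ collapse to $(-1)^{1\cdot(-1)}=-1$ in front of the first term and $(-1)^{2+1}=-1$ in front of the second, giving
\begin{align*}
\partial\sigma(\chi_1,\chi_1,\chi_1) \;=\; -\,\chi_1 \cup \sigma(\chi_1\ot \chi_1) \;-\; \sigma(\chi_1\ot \chi_1)\cup \chi_1.
\end{align*}
Writing the image $\sigma(\chi_1\ot\chi_1)\in H^1$ in the basis $\{\chi_1,\ldots,\chi_d\}$ as $\sum_{i=1}^{d}\lambda_i\chi_i$ with $\lambda_i\in\F_p$, it therefore suffices to show that
\begin{align*}
\chi_1\cup\chi_i + \chi_i\cup\chi_1 \;=\; 0 \quad \text{in } H^2
\end{align*}
for every index $i$.

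The key step is then the case-by-case verification of this identity against the description of $\BBB$. For $i=1$ we have $\chi_1\ot\chi_1\in R$, so $\chi_1\cup\chi_1=0$ and the identity is automatic. For $i=2$, the relation $\chi_1\ot\chi_2+\chi_2\ot\chi_1\in\BBB\subset R$ is exactly the desired vanishing in $H^2$. For $i\ge 3$, I would check that both tensors $\chi_1\ot\chi_i$ and $\chi_i\ot\chi_1$ lie in $\BBB$: the index pair $(1,i)$ is excluded from $\BBB$ only when $j=2$ is even with $i=j-1=1$, and the pair $(i,1)$ is excluded only when $j=1$ is odd with $i=j+1=2$; neither of these happens for $i\ge 3$. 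Hence $\chi_1\cup\chi_i=0$ and $\chi_i\cup\chi_1=0$ individually in $H^2$, and the identity holds trivially.

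Assembling these observations yields $\partial\sigma(\chi_1^{\ot 3})=-\sum_i\lambda_i(\chi_1\cup\chi_i+\chi_i\cup\chi_1)=0$, which is the assertion of the lemma. There is no real obstacle here: the argument is essentially bookkeeping against the explicit basis $\BBB$ of $R$, and the punchline is that the only non-trivial cup-product in $H^2$, namely $\chi_1\cup\chi_2$, is symmetric-zero because of the commutator relation built into $\BBB$.
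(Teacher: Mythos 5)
Your proposal is correct and follows essentially the same route as the paper: expand $\sigma(\chi_1\ot\chi_1)$ in the basis of $H^1$, apply the Koszul-complex differential to get $-\chi_1\cup\sigma(\chi_1\ot\chi_1)-\sigma(\chi_1\ot\chi_1)\cup\chi_1$, and observe that all terms vanish in $H^2$ because the only nonzero products $\chi_1\cup\chi_2$ and $\chi_2\cup\chi_1$ combine into the relation $\chi_1\cup\chi_2+\chi_2\cup\chi_1=0$. Your extra case-by-case check against $\BBB$ is just a more explicit version of the paper's remark that $\chi_1\cup\chi_j$ and $\chi_j\cup\chi_1$ are nonzero only for $j=2$.
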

\begin{proof}
Let $c_j \in \F_p$ be coefficients such that $\pam(\chi_1 \ot \chi_1) = \sum_{j=1}^d c_j \chi_j$ in $H^1$. 
Since $\chi_1 \cup \chi_j$ and $\chi_j \cup \chi_1$ are nonzero in $H^2$ if and only if $j=2$, we get 
\begin{align*}
\partial \pam (\chi_1^{\ot 3} ) & = - \chi_1 \cup \pam (\chi_1 \ot \chi_1) - \pam (\chi_1 \ot \chi_1) \cup \chi_1\\
& = - c_2 (\chi_1 \cup \chi_2) - c_2 (\chi_2 \cup \chi_1) \\
& = - c_2 (\chi_1 \cup \chi_2 + \chi_2 \cup \chi_1) \\
& = 0
\end{align*}
where we use the relations in $H^2$ for the final equality. 
This proves the assertion. 
\end{proof}


\begin{theorem}\label{thm:Demushkin_p_equal_3}
Let $d\ge 2$ be an even number and let $G$ be the pro-$3$ group generated by elements $x_1, \ldots, x_d$ with the single relation 
$x_1^3 [x_1,x_2]\cdots[x_{d-1},x_d]=1$. 
The canonical class of $G$ is non-trivial, and $G$ is not $A_3$-formal. 
\end{theorem}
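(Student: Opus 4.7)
The plan is to prove that $\gamma_G \neq 0$ in $\HH^{3,-1}(\Hb)$ and invoke Theorem \ref{thm:existence_of_obstruction_class}. By Corollary \ref{cor:Demushkin_groups_are_Koszul} and Proposition \ref{prop:canonical_class_via_Koszul_complex}, the canonical class is represented by the cocycle $\kappa_3 \colon K_3^3(\Hb)\to H^2$ constructed in Proposition \ref{prop:canonical_class_of_Demushkin_group_construction}. By Lemma \ref{lemma:chito3_is_not_hit}, every coboundary $\partial\sigma$ vanishes on the element $\chi_1^{\otimes 3}\in K_3^3(\Hb)$. Hence it suffices to verify that $\kappa_3(\chi_1^{\otimes 3})\ne 0$ in $H^2 = \F_3\langle\chi_1\cup\chi_2\rangle$ for one explicit choice of the pair $(f_1,f_2)$, and we use the choice of Section \ref{sec:Demushkin_canonical_class}.

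By Remark \ref{rem:A3_formal_Koszul_group_coh} and Corollary \ref{cor:Dwyer_consequence}, the class $\kappa_3(\chi_1^{\otimes 3})$ vanishes if and only if the continuous homomorphism $\brr \colon G\to\bU_4(\F_3)$ determined by the defining system $\{\chi_1,\chi_1,\chi_1,-f_2(\chi_1\otimes\chi_1),-f_2(\chi_1\otimes\chi_1)\}$ lifts along $U_4(\F_3)\onto\bU_4(\F_3)$. Using the explicit $f_2(\chi_1\otimes\chi_1)=\eta_{11}(g)=e_{13}(\varphi_{11}(g))$ with $\varphi_{11}(x_1)=A_{1,1}$ and $\varphi_{11}(x_j)=I_3$ for $j\ne 1$, a direct evaluation on generators shows that, modulo $Z_4(\F_3)$, one has $\brr(x_1)=B_+$ (with $B_+$ the matrix in \eqref{eq:def_of_matrix_B_+}), while $\brr(x_j)=I_4\bmod Z_4(\F_3)$ for all $j\ne 1$, since $\chi_1$ and $\eta_{11}$ both vanish on these generators.

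Now any lift $\rr\colon G\to U_4(\F_3)$ of $\brr$ must satisfy $\rr(x_j)=I_4+a_j E_{14}$ for some $a_j\in\F_3$ whenever $j\ne 1$, and $\rr(x_1)=B_+ + a_1 E_{14}$ for some $a_1\in\F_3$. Since $E_{14}$ is central in $U_4(\F_3)$, every commutator $[\rr(x_{2k-1}),\rr(x_{2k})]$ equals $I_4$: for $k\ge 2$ both arguments are central, and for $k=1$ the second argument is central. Thus the relation in $G$ forces $\rr(x_1)^3 = I_4$. However, a direct computation using that $E_{14}$ is central and squares to zero gives $\rr(x_1)^3 = B_+^3 + 3a_1 E_{14} = B_+^3$, whose $(1,4)$-entry equals $\binom{3}{3}=1\ne 0$ in $\F_3$. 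This contradiction shows that $\brr$ does not lift, so $\kappa_3(\chi_1^{\otimes 3})\ne 0$, hence $\gamma_G\ne 0$ by Lemma \ref{lemma:chito3_is_not_hit}, and $G$ is not $A_3$-formal.

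The key obstacle — and the source of the contrast with the cases $q=p^f$, $f\ge 2$, treated in Theorem \ref{thm:Demushkin_two_generators} and Theorem \ref{thm:Demushkin_groups_are_formal} — is purely arithmetic: the obstruction class in $Z_4(\F_3)$ to lifting is governed by $\binom{q}{3}\in\F_p$, which is nonzero precisely when $q=p=3$. For $q=p^f$ with $p\ge 5$ or with $p=3$ and $f\ge 2$ one has $\binom{q}{3}=0$ in $\F_p$, which is why the matrix $B_+$ gave an honest representation into $U_4(\F_p)$ in the earlier arguments. No subtle cohomological input beyond Dwyer's theorem and the Koszul computation of Hochschild cohomology is needed; the entire proof reduces to this binomial identity together with verifying that any deformation of $\brr$ by central elements cannot cancel the offending $(1,4)$-entry.
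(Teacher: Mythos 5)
Your proposal is correct and follows essentially the same route as the paper's proof: reduce to $\kappa_3(\chi_1^{\otimes 3})\neq 0$ via Lemma \ref{lemma:chito3_is_not_hit}, and use Dwyer's correspondence (Corollary \ref{cor:Dwyer_consequence}) with the matrix $B_+$ to show the associated homomorphism $\brr\colon G\to\bU_4(\F_3)$ does not lift because $B_+^3$ has $(1,4)$-entry $\binom{3}{3}=1$. Your explicit verification that no central perturbation $B_++a_1E_{14}$ can repair the cube is exactly the detail the paper leaves implicit in the step ``since $B_+^3\ne I_4$ in $U_4(\F_3)$, $\brr_1$ does not extend.''
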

\begin{proof}
%
Let $\kappa_3 \colon K^3_3(\Hb) \to H^2$ denote the map defined in Proposition \ref{prop:canonical_class_of_Demushkin_group_construction}. 
By Lemma \ref{lemma:chito3_is_not_hit}, 
if $\kappa_3(\chi_1^{\ot 3}) \ne 0$ in $H^2$, 
then the class of $\kappa_3$ in $\HH^{3,-1}(\Hb)$ is non-trivial. 
Thus, to prove the theorem it suffices to show $\kappa_3(\chi_1^{\ot 3}) \ne 0$.    
To show the latter we show that the cocycle $\Psi_3(\chi_1^{\ot 3})$ is not a coboundary in $\Cb$. 
%
Let $B_+$ denote the matrix defined in \eqref{eq:def_of_matrix_B_+}. 
Since $\binom{3}{2}=0$ and $\binom{3}{3}=1$ in $\F_3$, we get 
\begin{align*}
B_+^3 = \begin{pmatrix} 
1 & 0 & 0 & 1 \\
0 & 1 & 0 & 0 \\
0 & 0 & 1 & 0 \\
0 & 0 & 0 & 1 
\end{pmatrix} 
~ \text{in} ~ U_4(\F_3).
\end{align*}
%
Thus, $B_+^3 = I_4$ in the quotient $\bU_4(\F_3) = U_4(\F_3)/Z_4(\F_3)$. 
Since $[B_+,I_4]=I_4$ in $U_4(\F_3)$ and thereby also $[B_+,I_4]=I_4$ in $\bU_4(\F_3)$,  
the assignment $\brr_1(x_1) = B_+$ and  $\brr_1(x_j) = I_4$ for $j=2,\ldots,d$ 
defines a continuous group homomorphism $\brr_1 \colon G \to \bU_4(\F_3)$. 
However, since $B_+^3 \ne I_4$ in $U_4(\F_3)$, $\brr_1$ does not extend to a continuous group homomorphism $G \to U_4(\F_3)$. 
By Corollary \ref{cor:Dwyer_consequence}, 
this implies that the cocycle $\Psi_3(\chi_1^{\ot 3})$ is not a coboundary.  
This 
proves the assertion. 
\end{proof}


\begin{remark}
We note that in the proof of Theorem \ref{thm:Demushkin_p_equal_3} we cannot replace $\chi_1$ by any other $\chi_i$. 
That is, if $i \ne 1$, we can define a continuous group homomorphism $\rr \colon G \to U_4(\F_3)$ by setting $\rr(x_i) = B_+$ and $\rr(x_j)=I_4$ 
for $j \ne i$, 
which yields the vanishing of $\kappa_3(\chi_i^{\ot 3})$ for $i=2,\ldots,d$. 
%
The difference is that the non-triviality of $B_+^3$ only matters for $x_1$ 
which occurs outside the commutators in the relation $x_1^3 [x_1,x_2]\cdots[x_{d-1},x_d]=1$. 
\end{remark}


\begin{remark}\label{rem:p3_Demushkin_triple_Massey_still_vanishes} 
%
We now provide an alternative proof of Theorem \ref{thm:Demushkin_p_equal_3}. 
Let $G$ be the pro-$3$ group as in Theorem \ref{thm:Demushkin_p_equal_3}. 
It is well-known that triple Massey products vanish for Demushkin groups 
(see for example \cite[Theorem 4.3]{MT2} and \cite[Theorem 3.5]{PSz}).  
We therefore point out that the argument in the proof of Theorem \ref{thm:Demushkin_p_equal_3} 
does not imply that the triple Massey product 
$\langle \chi_1, \chi_1, \chi_1 \rangle$ does not vanish. 
In fact, the argument shows that for the particular defining system 
$(\chi_1,\chi_1,\chi_1, - f_2(\chi_1 \ot \chi_1), - f_2(\chi_1 \ot \chi_1) )$,  
the cocycle 
\[
\alpha \coloneqq - \chi_1 \cup f_2(\chi_1 \ot \chi_1) - f_2(\chi_1 \ot \chi_1) \cup \chi_1 
\] 
is not a coboundary 
and therefore provides a non-trivial element in the set $\langle \chi_1, \chi_1, \chi_1 \rangle$. 
However, we can modify our choice of defining system as follows. 
Let $C$ be the matrix given by 
\begin{align*}
C = \begin{pmatrix} 
1 & 0 & 1 & 0 \\
0 & 1 & 0 & 0 \\
0 & 0 & 1 & 0 \\
0 & 0 & 0 & 1 
\end{pmatrix}~ \text{in} ~ U_4(\F_3).
\end{align*}
%
Since $B^3_+ = \begin{pmatrix} 
1 & 0 & 0 & 1 \\
0 & 1 & 0 & 0 \\
0 & 0 & 1 & 0 \\
0 & 0 & 0 & 1 
\end{pmatrix}$
and 
$[B_+,C] = \begin{pmatrix} 
1 & 0 & 0 & -1 \\
0 & 1 & 0 & 0 \\
0 & 0 & 1 & 0 \\
0 & 0 & 0 & 1 
\end{pmatrix}$, 
we get 
\begin{align*}
B_+^3 \cdot [B_+,C] = I_4 ~ \text{in} ~ U_4(\F_3).
\end{align*}
Thus, we can define a continuous homomorphism $\wrr \colon G \to U_4(\F_3)$ 
by setting 
\begin{align*}
\wrr(x_1) = B_+, ~ \wrr(x_2) = C, ~ \text{and} ~ \wrr(x_j) = I_4 ~ \text{for} ~ j =3,\ldots,d. 
\end{align*}
Then $\wrr$ corresponds to the defining system 
\begin{align*}
(\chi_1,\chi_1,\chi_1, - f_2(\chi_1 \ot \chi_1) + \chi_2, - f_2(\chi_1 \ot \chi_1) )
\end{align*}
and yields that the corresponding cocycle is a coboundary. 
In fact, the continuous map $\wtheta \colon G \to \F_3$ given by $\wtheta(g) \coloneqq - e_{14}(\wrr(g))$ 
provides a cochain such that 
\[
\delta \wtheta = \alpha + \chi_2 \cup \chi_1.
\] 
In particular, we get that the class of $\alpha$ in $H^2$ equals $\chi_1 \cup \chi_2$, 
i.e., $\kappa_3(\chi_1^{\ot 3}) = \chi_1 \cup \chi_2$. 
\end{remark}

\begin{remark}\label{rem:realizable_Demushkin_group}
By \cite[page 254]{Koenigsmann}, the group $G$ of Theorem \ref{thm:Demushkin_p_equal_3} for $d=4$ 
is realisable as the maximal pro-$3$ Galois group $G_F(3)$ of the field $F=\Q_3(\zeta_3)$ 
where $\zeta_3$ is a root of unity of order $3$. 
According to \cite{EfratDem} and \cite{Koenigsmann} (see also \cite[Remark 3.3]{Quadrelli}), 
this is the only Demushkin group of rank $4$ which is known to be realisable as the maximal pro-$p$ Galois group $G_F(p)$ of a field. 
\end{remark}


\begin{remark}\label{rem:semidirect_product_at_p=3_is_not_A3-formal}
As pointed out in Remark \ref{rem:Positselski_example_as_semidirect_product}, 
the pro-$3$-group with generators $x_1$ and $x_2$ and relation $x_1^3[x_1,x_2]=1$ is isomorphic to 
the semi-direct product $\Z_3 \rtimes_{\car} \Z_3$ 
where $\car \colon \Z_3 \to 1 + 3\Z_3$ is the cyclotomic character. 
Theorem \ref{thm:Demushkin_p_equal_3} 
shows that $\Z_3 \rtimes_{\car} \Z_3$ is not $A_3$-formal even though $\Z_3$ is intrinsically $A_{\infty}$-formal. 
\end{remark}


\section{Proof of Theorem \ref{thm:Demushkin_groups_are_formal}}\label{sec:proof_of_Dem_thm}

In this section we prove Theorem \ref{thm:Demushkin_groups_are_formal}. 
For $d=2$, the assertion is proven in Theorem \ref{thm:Demushkin_two_generators}. 
We therefore assume from now on $d\ge 4$. 
%
%
We will first determine a basis of $K^3_3(\Hb)$. 
We then compute the values of the map $\kappa_3$ induced by the map $\Psi_3$ of Proposition \ref{prop:canonical_class_of_Demushkin_group_construction}. 
In fact, we will show that $\kappa_3$ vanishes on most basis elements, 
while $\kappa_3$ is non-trivial on a certain subset of the basis. 
However, we then show that $\kappa_3$ is a Hochschild-coboundary. 
Using our previous results, we can then deduce Theorem \ref{thm:Demushkin_groups_are_formal}. 

\subsection{A basis for $K^3_3(\Hb)$}\label{sec:basis_for_K3}

We continue to use the notation of Section \ref{sec:A3_formality_for_Demushkin_groups}, 
in particular, the notation introduced in Section \ref{sec:Demushkin_canonical_class} leading to Proposition \ref{prop:canonical_class_of_Demushkin_group_construction}. 
In addition, we will use the following abbreviated notation. 

\begin{notn}
We will often write $\chi_{i,j,k}$ for $\chi_i \ot \chi_j \ot \chi_k \in H^1 \ot H^1 \ot H^1$ 
when it makes formulas easier to read or easier to fit into the text. 
\end{notn}

We can now determine a basis of $K_3^3(\Hb)$. 


\begin{lemma}\label{lemma:Demushkin_K_3}
The vector space $K^3_3(\Hb) = (H^1\ot R) \cap (R\ot H^1)$ is given by 
\begin{align*}
K^3_3(\Hb) = \mathrm{span}_{\F_p}(\SSS \cup \DDD \cup \DDDw \cup \TTT)
\end{align*}
where $\SSS \cup \DDD \cup \DDDw \cup \TTT$ is a basis, 
and the sets $\SSS$ (single terms), $\DDD$, $\DDDw$ (double sums), and $\TTT$ (triple sums) are 
\begin{align*}
\SSS & = \left\{ \chi_i \ot \chi_j \ot \chi_k 
~ \text{with}~ \begin{cases}
i, k \ne j+1 &  \text{if} ~ j ~ \text{is odd} \\
i, k \ne j-1 &  \text{if} ~ j ~ \text{is even}; 
\end{cases} 
\right\}, 
\end{align*}
\begin{align*}
\DDD = & ~ \{ \chi_k \ot (\chi_{2i-1} \ot \chi_{2i} + \chi_{2i} \ot \chi_{2i-1}), \\
& ~ ~(\chi_{2i-1} \ot \chi_{2i} + \chi_{2i} \ot \chi_{2i-1}) \ot \chi_k ~ \text{for} ~ 1\le i \le d/2, ~ k \ne 2i-1, 2i, \\
& ~ ~ \chi_k \ot (\chi_{1} \ot \chi_{2} + \chi_{2i} \ot \chi_{2i-1}), \\
& ~ ~(\chi_{1} \ot \chi_{2} + \chi_{2i} \ot \chi_{2i-1}) \ot \chi_k ~ \text{for} ~ 2 \le i \le d/2, ~ k \ne 1, 2, 2i-1, 2i
\} 
\end{align*}
\begin{align*}
\DDDw = & ~ \{ \chi_{1} \ot (\chi_{1} \ot \chi_{2} + \chi_{2i} \ot \chi_{2i-1}), 
\chi_{2i} \ot (\chi_{1} \ot \chi_{2} + \chi_{2i} \ot \chi_{2i-1}), \\
& ~ \chi_{2} \ot (\chi_{2} \ot \chi_{1} + \chi_{2i-1} \ot \chi_{2i}), 
\chi_{2i-1} \ot (\chi_{2} \ot \chi_{1} + \chi_{2i-1} \ot \chi_{2i}), \\
& ~(\chi_{2} \ot \chi_{1} + \chi_{2i-1} \ot \chi_{2i}) \ot \chi_1, 
(\chi_{2} \ot \chi_{1} + \chi_{2i-1} \ot \chi_{2i}) \ot \chi_{2i},  \\ 
& ~ (\chi_{1} \ot \chi_{2} + \chi_{2i} \ot \chi_{2i-1}) \ot \chi_2,  
(\chi_{1} \ot \chi_{2} + \chi_{2i} \ot \chi_{2i-1}) \ot \chi_{2i-1},  ~ \text{for} ~ 2 \le i \le d/2
\}.
\end{align*}
\begin{align*}
\TTT = & ~ \{ \chi_{2i} \ot \chi_{2i-1} \ot \chi_{2i-1} + \chi_{2i-1} \ot \chi_{2i} \ot \chi_{2i-1} + \chi_{2i-1} \ot \chi_{2i-1} \ot \chi_{2i}, \\
& ~ ~\chi_{2i-1} \ot \chi_{2i} \ot \chi_{2i} + \chi_{2i} \ot \chi_{2i-1} \ot \chi_{2i} + \chi_{2i} \ot \chi_{2i} \ot \chi_{2i-1} ~ \text{for} ~ 1\le i \le d/2 
\}, 
\end{align*}
\end{lemma}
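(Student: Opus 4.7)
The plan is to prove Lemma \ref{lemma:Demushkin_K_3} by combining a dimension count via Lemma \ref{lemma:dim_of_Knn} with explicit verification that the listed elements lie in $K_3^3(\Hb)$ and are linearly independent. Since $\Hb$ is a quadratic algebra over $\F_p$ with $\dim_{\F_p} H^1 = d$ and $\dim_{\F_p} H^2 = 1$, Lemma \ref{lemma:dim_of_Knn} yields $\dim_{\F_p} K_3^3(\Hb) = b_3 = d\cdot b_2 - b_1 = d(d^2-1) - d = d^3 - 2d$. A direct enumeration of the proposed basis gives $|\SSS| = d(d-1)^2$, $|\DDD| = d(d-2) + 2(d/2-1)(d-4) = 2(d-2)^2$, $|\DDDw| = 4(d-2)$, and $|\TTT| = d$; these sum to $d^3-2d$, matching $\dim_{\F_p} K_3^3(\Hb)$. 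It therefore suffices to verify that the listed elements lie in $(H^1 \ot R) \cap (R \ot H^1)$ and are linearly independent.

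Membership in $K_3^3(\Hb)$ is checked case by case. For $\chi_i \ot \chi_j \ot \chi_k \in \SSS$, the defining condition on $(i,j,k)$ forces both $\chi_i \ot \chi_j$ and $\chi_j \ot \chi_k$ to be among the explicit generators of $R$ listed in $\BBB$, so the tensor lies in both $R \ot H^1$ and $H^1 \ot R$. For a $\DDD$-element such as $\chi_k \ot (\chi_{2i-1} \ot \chi_{2i} + \chi_{2i} \ot \chi_{2i-1})$ with $k \ne 2i-1, 2i$, membership in $H^1 \ot R$ is immediate, while the regrouping $(\chi_k \ot \chi_{2i-1}) \ot \chi_{2i} + (\chi_k \ot \chi_{2i}) \ot \chi_{2i-1}$ shows membership in $R \ot H^1$ thanks to the constraint on $k$. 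The remaining $\DDD$ elements and all $\DDDw$ elements are handled analogously, repeatedly invoking the consequence $\chi_2 \ot \chi_1 + \chi_{2i-1} \ot \chi_{2i} \in R$ of the defining relations of $\BBB$. For a $\TTT$-element such as $\chi_{2i-1} \ot \chi_{2i} \ot \chi_{2i} + \chi_{2i} \ot \chi_{2i-1} \ot \chi_{2i} + \chi_{2i} \ot \chi_{2i} \ot \chi_{2i-1}$, the regrouping $\chi_{2i-1} \ot (\chi_{2i} \ot \chi_{2i}) + \chi_{2i} \ot (\chi_{2i-1} \ot \chi_{2i} + \chi_{2i} \ot \chi_{2i-1})$ places it in $H^1 \ot R$, and the symmetric decomposition places it in $R \ot H^1$.

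Linear independence follows by inspecting the supports of the proposed elements in the standard basis $\{\chi_i \ot \chi_j \ot \chi_k\}$ of $(H^1)^{\ot 3}$. The $\SSS$-elements have singleton support on triples $(i,j,k)$ where neither $(i,j)$ nor $(j,k)$ is an odd-even adjacent pair, whereas every element of $\DDD \cup \DDDw \cup \TTT$ is supported on triples where at least one adjacent pair is odd-even; hence the $\SSS$-part is linearly independent from the rest. Within $\DDD \cup \DDDw \cup \TTT$, organizing by which adjacent pair is odd-even and by the outer tensor factor produces an explicit decomposition that exhibits linear independence. Alternatively, since $K_3^3(\Hb)$ is cut out of $(H^1)^{\ot 3}$ by $2d$ linear equations and has codimension $2d$ by the dimension count, these equations are automatically independent; combined with the matching cardinality of the proposed basis and its inclusion in $K_3^3(\Hb)$, this forces it to be a basis.

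The main obstacle is the bookkeeping for the $\DDDw$-family, where the outer tensor factor coincides with an index appearing inside the inner relation. Verifying that these nominally overlapping elements are genuinely independent of the $\DDD$-elements and together contribute the correct codimension is the delicate step in producing precisely the basis as stated, rather than a mere spanning set with hidden relations.
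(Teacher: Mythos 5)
Your overall strategy coincides with the paper's: compute $\dim_{\F_p}K^3_3(\Hb)=d^3-2d$ from Lemma \ref{lemma:dim_of_Knn}, check that the cardinalities of $\SSS$, $\DDD$, $\DDDw$, $\TTT$ add up to this number, and then reduce to linear independence (your explicit membership checks, which the paper leaves implicit, are correct and harmless). The counting and the separation of $\SSS$ from the rest are also fine: each element of $\SSS$ is a single monomial with no adjacent dual pair, while every monomial occurring in $\DDD\cup\DDDw\cup\TTT$ contains an adjacent pair of the form $(\chi_{2i-1},\chi_{2i})$ or $(\chi_{2i},\chi_{2i-1})$, so $\vspan(\SSS)\cap\vspan(\DDD\cup\DDDw\cup\TTT)=\{0\}$.

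The genuine gap is the linear independence of $\DDD\cup\DDDw\cup\TTT$, which is exactly where the paper spends the bulk of its proof. Your first suggestion, ``organizing by which adjacent pair is odd-even and by the outer tensor factor produces an explicit decomposition,'' does not work as stated: the supports overlap across the three families in both positions. For instance, the monomial $\chi_1\ot\chi_{2i}\ot\chi_{2i-1}$ occurs both in the $\DDD$-element $\chi_1\ot(\chi_{2i-1}\ot\chi_{2i}+\chi_{2i}\ot\chi_{2i-1})$ and in the $\DDDw$-element $\chi_1\ot(\chi_1\ot\chi_2+\chi_{2i}\ot\chi_{2i-1})$, the monomial $\chi_1\ot\chi_1\ot\chi_2$ occurs in the $\TTT$-element for $i=1$ and in all the $\DDDw$-elements $\chi_1\ot(\chi_1\ot\chi_2+\chi_{2j}\ot\chi_{2j-1})$, and a single $\TTT$-element contains monomials whose dual pair sits in positions $1$--$2$ as well as in positions $2$--$3$; so no block decomposition by ``type of adjacent pair and outer factor'' falls out, and one must argue as the paper does, by first isolating the monomials $\chi_{2i-1}\ot\chi_{2i}\ot\chi_{2i-1}$ and $\chi_{2i}\ot\chi_{2i-1}\ot\chi_{2i}$ (which occur only in $\TTT$) to kill the $\TTT$- and then the $\DDDw$-coefficients, and only afterwards the $\DDD$-coefficients. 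Your proposed alternative is not a repair but a non sequitur: knowing that $K^3_3(\Hb)$ has codimension exactly $2d$ (so that the $2d$ defining linear equations are independent) says nothing about your listed vectors, and a family of $d^3-2d$ vectors contained in a $(d^3-2d)$-dimensional space need not be a basis --- matching cardinality plus containment does not force independence. You yourself flag the overlap bookkeeping as ``the delicate step,'' but the proposal never carries it out, so as written the proof is incomplete precisely at that step.
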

\begin{proof}
By Lemma \ref{lemma:dim_of_Knn}, 
we know that $\dim_{\F_p} K^3_3(\Hb) = d^3-2d$. 
We have 
\begin{align*}
\# \SSS & = (d-1)d(d-1)= d(d-1)^2, \\
\# \DDD & = 2(d/2)(d-2) + 2(d/2-1)(d-4) = 2(d-2)^2, \\
~ \# \DDDw & = 8(d/2-1) = 4d - 8, 
~ \text{and} ~ \# \TTT  = 2 d/2= d. 
\end{align*}
Thus, 
\begin{align*}
\# \SSS + \# \DDD + \#\DDDw + \#\TTT 
= & ~ d(d-1)^2 + 2(d-2)^2 + 4d - 8 + d \\
= & ~ d^3 -2d.
\end{align*} 
Hence, to prove the assertion, it suffices to check that the union 
of sets $\SSS \cup \DDD \cup \DDDw \cup \TTT$ is linearly independent.  
To show the latter claim, we note that the vectors $\chi_i \ot \chi_j \ot \chi_k$ for $i,j,k \in \{1,\ldots, d\}$ 
form a basis of $H^1\ot H^1 \ot H^1$. 
Since elements in $\SSS$ are single vectors of the form$\chi_i \ot \chi_j \ot \chi_k$ 
and since $\vspan (\SSS) \cap \vspan (\DDD \cup \DDDw \cup \TTT) = \{0\}$, 
it suffices show that the set $\DDD \cup \DDDw \cup \TTT$ is linearly independent. 

Now we assume that the zero vector in $H^1\ot H^1 \ot H^1$ is written as a linear combination of elements of $\DDD \cup \DDDw \cup \TTT$. 
Vectors of the form $\chi_{2i-1} \ot \chi_{2i} \ot \chi_{2i-1}$ 
only occur in sums in $\TTT$. 
Thus, the coefficient of the term containing $\chi_{2i-1} \ot \chi_{2i} \ot \chi_{2i-1}$ is zero. 
For $i=1$, this implies that the coefficients of 
$\chi_{1} \ot (\chi_{1} \ot \chi_{2} + \chi_{2j} \ot \chi_{2j-1})$ and 
$(\chi_{2} \ot \chi_{1} + \chi_{2j-1} \ot \chi_{2j}) \ot \chi_1$ must be zero.  
For $i\ge 2$, this implies that the coefficients of $\chi_{2i-1} \ot (\chi_{2} \ot \chi_{1} + \chi_{2i-1} \ot \chi_{2i})$ and 
$(\chi_{1} \ot \chi_{2} + \chi_{2i} \ot \chi_{2i-1}) \ot \chi_{2i-1}$ 
for $2 \le j \le d/2$ in $\DDDw$ must be zero as well.  
Similarly, vectors of the form $\chi_{2i} \ot \chi_{2i-1} \ot \chi_{2i}$ 
only occur in sums in $\TTT$. 
Thus, the coefficient of the term containing $\chi_{2i} \ot \chi_{2i-1} \ot \chi_{2i}$ is zero. 
For $i=1$, this implies that the coefficients of 
$\chi_{2} \ot (\chi_{2} \ot \chi_{1} + \chi_{2j-1} \ot \chi_{2j})$ and 
$(\chi_{1} \ot \chi_{2} + \chi_{2j} \ot \chi_{2j-1}) \ot \chi_2$ 
for $2 \le j \le d/2$ in $\DDDw$ must be zero.  
For $i\ge 2$, this implies that the coefficients of 
$\chi_{2i} \ot (\chi_{1} \ot \chi_{2} + \chi_{2i} \ot \chi_{2i-1})$ and 
$(\chi_{2} \ot \chi_{1} + \chi_{2i-1} \ot \chi_{2i}) \ot \chi_{2i}$ 
in $\DDDw$ must be zero as well.  
This shows that the coefficients of all vectors in $\DDDw$ and $\TTT$ are zero. 
It remains to consider the coefficients of vectors in $\DDD$. 
For $k=1,2$ and $i \ge 2$, the vectors 
$\chi_k \ot (\chi_{2i-1} \ot \chi_{2i} + \chi_{2i} \ot \chi_{2i-1})$ 
and $(\chi_{2i-1} \ot \chi_{2i} + \chi_{2i} \ot \chi_{2i-1}) \ot \chi_k$ 
share a summand each with exactly one vector in $\DDDw$. 
Similarly, for $k=2j-1,2j$ and $j\ne 1, i$, the vectors 
$\chi_k \ot (\chi_{1} \ot \chi_{2} + \chi_{2i} \ot \chi_{2i-1})$ 
and $(\chi_{1} \ot \chi_{2} + \chi_{2i} \ot \chi_{2i-1}) \ot \chi_k$ 
share a summand each with exactly one vector in $\DDDw$. 
However, since the coefficients of vectors in $\DDDw$ are zero, 
the coefficients of the above vectors in $\DDD$ must be zero as well. 
The remaining vectors in $\DDD$ consist of sums for which the summands only occur in $\DDD$, 
and each summand occurs in exactly one vector in $\DDD$.  
This implies that the coefficients of all vectors in $\DDD$ vanish.  
This proves that all coefficients must be zero 
and hence the claim. 
\end{proof}

\begin{example}
For $d=2$, the sets $\DDD$ and $\DDDw$ are empty, 
and we recover the basis used in the proof of Theorem \ref{thm:Demushkin_two_generators}. 
For $d=4$, we have 
\begin{align*}
\DDD =  \{ & \chi_{1,2,3}+ \chi_{2,1,3}, 
\chi_{1,2,4} + \chi_{2,1,4}, 
\chi_{3,1,2} + \chi_{3,2,1}, 
\chi_{4,1,2} + \chi_{4,2,1}, \\
&  \chi_{3,4,1} + \chi_{4,3,1}, 
  \chi_{3,4,2} + \chi_{4,3,2}, 
  \chi_{1,3,4} + \chi_{1,4,3}, 
  \chi_{2,3,4} + \chi_{2,4,3}\}, 
 \end{align*}
\begin{align*}
\DDDw = ~ \{ & \chi_{1,1,2} + \chi_{1,4,3}, 
 \chi_{4,1,2}  + \chi_{4,4,3}, 
 \chi_{1,2,2} + \chi_{4,3,2}, 
 \chi_{1,2,3} + \chi_{4,3,3}, \\
 &  \chi_{2,2,1} + \chi_{2,3,4},  
 \chi_{3,2,1} + \chi_{3,3,4}, 
 \chi_{2,1,1} + \chi_{3,4,1}, 
 \chi_{2,1,4} + \chi_{3,4,4} \}, 
\end{align*}  
\begin{align*}
\TTT =  \{ & \chi_{1,1,2} + \chi_{1,2,1} + \chi_{2,1,1}, 
\chi_{2,2,1} + \chi_{2,1,2} + \chi_{1,2,2}, \\
&  \chi_{3,3,4} + \chi_{3,4,3} + \chi_{4,3,3}, 
\chi_{4,4,3} + \chi_{4,3,4} + \chi_{3,4,4}\}.  
\end{align*}
\end{example}


\subsection{Computation of $\kappa_3$}\label{sec:computing_kappa3}

%

We will now compute the values of $\kappa_3$. 
To do so, we will make frequent use of the following simple observation: 

\begin{lemma}\label{lemma:B^q=I_4}
For all choices of $\varepsilon_i \in \{-1,0,1\}$, 
the $q$th power of the matrix $B_{\varepsilon_1, \varepsilon_2, \varepsilon_3}$ given by 
\begin{align*}
B_{\varepsilon_1, \varepsilon_2, \varepsilon_3} = \begin{pmatrix} 
1 & \varepsilon_1 & 0 & 0 \\
0 & 1 & \varepsilon_2 & 0 \\
0 &  0 & 1 & \varepsilon_3\\
0 & 0 & 0 & 1 
\end{pmatrix}
\end{align*}
is the identity matrix in $U_4(\F_p)$, i.e., 
\begin{align*}
B^q_{\varepsilon_1, \varepsilon_2, \varepsilon_3} = I_4 ~ \text{in} ~ U_4(\F_p) 
\end{align*}
\end{lemma}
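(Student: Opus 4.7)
The plan is to write $B = B_{\varepsilon_1, \varepsilon_2, \varepsilon_3}$ as $I_4 + N$ where $N$ is the strictly upper triangular matrix with super-diagonal entries $\varepsilon_1, \varepsilon_2, \varepsilon_3$. Since $N$ is strictly upper triangular of size $4 \times 4$, one has $N^4 = 0$, and since $I_4$ commutes with $N$, the binomial theorem yields
\begin{align*}
B^q = (I_4 + N)^q = I_4 + q\, N + \binom{q}{2} N^2 + \binom{q}{3} N^3
\end{align*}
in $U_4(\F_p)$. The case $q = 0$ is immediate, so I will assume $q = p^f$ with $f \ge 1$, and $f \ge 2$ whenever $p = 3$.

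Next I would verify by a short direct computation that $N^2$ and $N^3$ have entries in $\{0, \pm 1\}$, so no entry is annihilated for a trivial reason, and the identity $B^q = I_4$ will follow precisely from the divisibility of each of $\binom{q}{1}$, $\binom{q}{2}$, $\binom{q}{3}$ by $p$.

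Divisibility of the first two coefficients is routine: $\binom{q}{1} = p^f$ is divisible by $p$, and $\binom{q}{2} = \tfrac{p^f(p^f - 1)}{2}$ is divisible by $p$ because $p$ is odd so $2$ is a unit in $\F_p$. The main obstacle is the case $k = 3$, since $\binom{3}{3} = 1$ is not divisible by $3$, and it is precisely this obstruction that excludes $(p,f) = (3,1)$ from the hypothesis. The plan for this step is to apply Lucas's theorem: write $q = p^f$ in base $p$ as $(1, 0, \ldots, 0)$ and $3$ in base $p$. For $p \ge 5$ the base-$p$ digits of $3$ are $(3)$, and Lucas gives $\binom{p^f}{3} \equiv \binom{0}{3} \cdot (\text{stuff}) = 0 \pmod p$. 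For $p = 3$ and $f \ge 2$, the base-$3$ digits of $3$ are $(1,0)$ and of $3^f$ are $(1, 0, \ldots, 0)$ with at least three digits, so one of the Lucas factors is $\binom{0}{1} = 0$, giving $\binom{3^f}{3} \equiv 0 \pmod 3$.

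Collecting these, all three binomial coefficients vanish modulo $p$ under the stated hypothesis on $q$, and hence $B^q = I_4$ in $U_4(\F_p)$, which concludes the proof.
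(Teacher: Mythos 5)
Your proof is correct and follows essentially the same route as the paper, which also verifies $B^q_{\varepsilon_1,\varepsilon_2,\varepsilon_3}=I_4$ by computing the $q$th power directly, the point in every case being that $\binom{q}{1}$, $\binom{q}{2}$ and $\binom{q}{3}$ vanish in $\F_p$ when $q=p^f$ with $p$ odd and $f\ge 2$ if $p=3$ (the case $q=0$ being trivial). Writing $B=I_4+N$ with $N^4=0$ and settling the coefficient $\binom{q}{3}$ via Lucas's theorem simply packages the paper's case-by-case check in a uniform way.
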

\begin{proof}
This follows from a direct computation for each case where we use that $q=p^f$ with $p$ odd and $f \ge 2$ when $p=3$. 
\end{proof}



\begin{lemma}\label{lemma:kappa_3_vanishes_on_many_subsets}
Let $\kappa_3 \colon K_3^3(\Hb) \to H^2$ be the map defined in Proposition \ref{prop:canonical_class_of_Demushkin_group_construction}. 
The map $\kappa_3$ vanishes on all elements in the sets $\SSS$, $\DDD$ and $\TTT$.  
Moreover, $\kappa_3$ vanishes on the elements in the subset $\DDDwr \subset \DDDw$ ($r$ for right-multiplication) defined by 
\begin{align*}
\DDDwr = & ~ \{ (\chi_{2} \ot \chi_{1} + \chi_{2i-1} \ot \chi_{2i}) \ot \chi_1, 
(\chi_{2} \ot \chi_{1} + \chi_{2i-1} \ot \chi_{2i}) \ot \chi_{2i},  \\ 
& ~ (\chi_{1} \ot \chi_{2} + \chi_{2i} \ot \chi_{2i-1}) \ot \chi_2,  
(\chi_{1} \ot \chi_{2} + \chi_{2i} \ot \chi_{2i-1}) \ot \chi_{2i-1} ~ \text{for} ~ 2 \le i \le d/2
\}. 
\end{align*}
\end{lemma}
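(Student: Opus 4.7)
The plan is to apply the criterion from Remark \ref{rem:A3_formal_Koszul_group_coh} case by case: for each basis element $\xi$ in the listed sets, exhibit a continuous group homomorphism $\rr \colon G \to U_4(\F_p)$ whose projections $e_{i,i+1}\circ\rr$ realize the entries $\chi_a,\chi_b,\chi_c$ of a decomposition $\xi = \sum \chi_a\otimes\chi_b\otimes\chi_c$ and whose entries $e_{1,3}\circ\rr$, $e_{2,4}\circ\rr$ compute the cochains $-f_2$ chosen in Section \ref{sec:Demushkin_canonical_class}. By Lemma \ref{lemma:matrix_relations} it suffices to define matrices $M_l \coloneqq \rr(x_l) \in U_4(\F_p)$ verifying the Demushkin relation $M_1^q[M_1,M_2][M_3,M_4]\cdots[M_{d-1},M_d]=I_4$; then Corollary \ref{cor:Dwyer_consequence} immediately yields $\kappa_3(\xi)=0$.

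For single terms $\chi_i\otimes\chi_j\otimes\chi_k\in\SSS$, I would set $M_l=I_4$ whenever $l\notin\{i,j,k\}$ and otherwise place single $1$'s in positions $(1,2)$, $(2,3)$, $(3,4)$ at the generators corresponding to $i$, $j$, $k$ respectively, merging entries into a common matrix when two or three of $i,j,k$ coincide. The condition that $i,k$ differ from the commutator partner of $j$ ensures that $M_i,M_j,M_k$ never appear together in a single commutator $[x_{2l-1},x_{2l}]$, so each commutator in the Demushkin relation is trivial; for the initial factor $M_1^q$, Lemma \ref{lemma:B^q=I_4} provides the needed vanishing when $1\in\{i,j,k\}$. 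For elements of $\TTT$, I would expand $(\chi_{2i-1}+\chi_{2i})^{\otimes 3}$ and exploit $[B_{+,+,+},B_{+,+,+}]=I_4$ together with Lemma \ref{lemma:B^q=I_4}, assigning $M_{2i-1}=M_{2i}=B_{+,+,+}$ as in the proof of Theorem \ref{thm:Demushkin_two_generators}; compatibility with the $f_2$-maps $\eta^{\oddeven}_i$ used to define $f_2$ on $\chi_{2i-1}\otimes\chi_{2i}+\chi_{2i}\otimes\chi_{2i-1}$ follows by inspection of the $(1,3)$ and $(2,4)$ entries.

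For $\DDD$ and $\DDDwr$ I plan to use the same pattern but now distribute the superdiagonal entries over four generators. For $\chi_k\otimes(\chi_{2i-1}\otimes\chi_{2i}+\chi_{2i}\otimes\chi_{2i-1})\in\DDD$, set $M_k$ with $1$ in position $(1,2)$ and $M_{2i-1}=M_{2i}$ with $1$'s in positions $(2,3)$ and $(3,4)$, so $[M_{2i-1},M_{2i}]=I_4$, while the first commutator $[M_1,M_2]$ (if $k\notin\{1,2\}$) and the factor $M_1^q$ reduce to $I_4$; the mixed sums $\chi_{1}\otimes\chi_{2}+\chi_{2i}\otimes\chi_{2i-1}$ are handled via the homomorphism $\varphi^{\oddeven}_{1,k}$ from Section \ref{sec:Demushkin_canonical_class}, whose $(1,3)$-entry matches $f_2$ by construction. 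For the right-multiplied elements in $\DDDwr$, the symmetric strategy applies: here the relation element sits on the left, so I would place the pair $(x_{2i-1},x_{2i})$ or $(x_1,x_2)$ in superdiagonals $(1,2)$ and $(2,3)$ (engineering a trivial commutator via $B$-type matrices) and the trailing factor $\chi_l$ in superdiagonal $(3,4)$; Lemma \ref{lemma:B^q=I_4} again makes $M_1^q=I_4$ when the index $1$ is involved.

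The main obstacle is the bookkeeping required to simultaneously satisfy three demands: the Demushkin relation must hold in $U_4(\F_p)$, the $(1,2),(2,3),(3,4)$ entries must recover the prescribed $\chi$'s, and the $(1,3),(2,4)$ entries must agree with the \emph{specific} cochains $f_2$ defined in Section \ref{sec:Demushkin_canonical_class} through the homomorphisms $\varphi_{ij},\varphi_{ii},\varphi^{\oddeven}_i,\varphi^{\oddeven}_{1,k}$. The difficulty is that these $f_2$-values were chosen non-uniformly, so the identification of the $e_{1,3}$-entry of $\rr$ with $-f_2$ requires different matrix recipes in the left-vs.-right multiplication cases and in the diagonal-vs.-off-diagonal cases. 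This is precisely what forces the restriction to $\DDDwr$ rather than all of $\DDDw$: for the left-multiplied elements of $\DDDw$ the analogous matrix assignments produce obstructions of the same shape as in Theorem \ref{thm:Demushkin_p_equal_3}, reflecting that the obstruction to $A_3$-formality at $q=3$ is genuine and localized exactly in $\DDDw\setminus\DDDwr$.
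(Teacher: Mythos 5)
Your overall route is the paper's: reduce via Remark \ref{rem:A3_formal_Koszul_group_coh} and Corollary \ref{cor:Dwyer_consequence} to constructing continuous homomorphisms $G \to U_4(\F_p)$, and use multilinearity of $\kappa_3$ to expand the double and triple sums into decomposable tensors plus elements of $\SSS$. However, the plan stops exactly where the lemma has content. For the mixed elements of $\DDD$ and for all of $\DDDwr$, once the superdiagonal entries are prescribed, the two commutators $[M_1,M_2]$ and $[M_{2i-1},M_{2i}]$ are forced to have nonzero $(1,3)$-entries, so ``engineering a trivial commutator'' is impossible; what has to be verified is that their \emph{product} is exactly $I_4$, including cancellation in the $(1,4)$-entries --- identities such as $[B_{0,1,1},B_{1,0,0}]\cdot[B_{1,0,0},B_{0,1,1}]=I_4$ are the actual computations, and none is exhibited in your proposal. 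More seriously, matching $e_{13}\circ\rr$ and $e_{24}\circ\rr$ with the \emph{specific} $f_2$ of Section \ref{sec:Demushkin_canonical_class} is not deferrable bookkeeping: if these entries differ from $f_2(u\ot v)$ and $f_2(v\ot w)$ by cocycles $z_1,z_2 \in \Zh^1$, then lifting $\brr$ to $U_4(\F_p)$ only shows that the class of $-u\cup f_2(v\ot w)-f_2(u\ot v)\cup w$ shifted by $u\cup z_2 + z_1\cup w$ vanishes, i.e.\ that \emph{some} element of the Massey product set is zero --- which is automatic for Demushkin groups --- and not that $\kappa_3(u\ot v\ot w)=0$. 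That distinction is the entire point of the lemma, as Lemma \ref{lemma:kappa_3_does_not_vanish} shows by exhibiting nonzero values of $\kappa_3$ on $\DDDwl$ even though all triple Massey products vanish. The paper's decompositions are designed so that the relevant $3\times 3$ blocks of the $U_4$-homomorphisms restrict to the homomorphisms $\varphi_{ij},\varphi_{ii},\varphi^{\oddeven}_i,\varphi^{\oddeven}_{1,k}$ that define $f_2$; your alternative decompositions for $\DDDwr$ (relation factors in positions $(1,2),(2,3)$, trailing factor in $(3,4)$) would need the same verification, which is absent. Also, for $\TTT$ you only expand $(\chi_{2i-1}+\chi_{2i})^{\ot 3}$; isolating each of the two basis elements of $\TTT$ requires the companion expansion $(\chi_{2i-1}-\chi_{2i})^{\ot 3}$ together with $p$ odd, as in the two-generator case.

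Your closing explanation of why the lemma is restricted to $\DDDwr$ is incorrect and would mislead you in completing the argument. The failure of vanishing on $\DDDwl$ is not a $q=3$ phenomenon: under the hypotheses in force in this section (which is where $q=p^f$ with $f\ge 2$ for $p=3$ enters, via $B^q_{\varepsilon_1,\varepsilon_2,\varepsilon_3}=I_4$), Lemma \ref{lemma:kappa_3_does_not_vanish} shows that $\kappa_3$ is nonzero on $\DDDwl$, yet this does not obstruct $A_3$-formality because $\kappa_3$ is subsequently shown to be a Hochschild coboundary in Lemma \ref{lemma:kappa_3_is_hit}. The genuine obstruction at $q=3$ in Theorem \ref{thm:Demushkin_p_equal_3} is detected on $\chi_1^{\ot 3}\in\SSS$, in combination with Lemma \ref{lemma:chito3_is_not_hit}; it is not localized in $\DDDw\setminus\DDDwr$. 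Expecting $\kappa_3$ to vanish on $\DDDwl$ when $q\ne 3$ would send you searching for lifts to $U_4(\F_p)$ that do not exist, instead of passing to the coboundary argument that the paper uses to finish the proof.
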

\begin{proof}
Let $\Psi_3 \colon K_3^3(\Hb) \to \Zh^2$ be the cocycle 
\begin{align*}
\Psi_3(\chi_a, \chi_b, \chi_c) = - \chi_a \cup f_2(\chi_b \ot \chi_c) - f_2(\chi_a \ot \chi_b) \cup \chi_c 
\end{align*}
defined in \eqref{eq:def_of_Psi3_Demushkin} 
where, by slight abuse of notation, we allow $(\chi_a, \chi_b, \chi_c)$ to denote a sum of tensors in $K_3^3(\Hb)$. 
We prove the assertion by constructing cochains $\vartheta \in \Ch^1$ such that $\delta \vartheta = \Psi_3$ for the elements in each of the given sets. 
To do so, we consider each set separately and, in addition, group elements within the sets into different classes.  
We construct $\vartheta$ for each class by constructing a suitable continuous group homomorphism $\rr \colon G \to U_4(\F_p)$ 
which extends the continuous group homomorphism $\brr \colon G \to \bU_4(\F_p)$ 
which corresponds to $\Psi_3$ by Dwyer's Theorem \ref{thm:Dwyer_alt1}. 
By Corollary \ref{cor:Dwyer_consequence} and Remark \ref{rem:A3_formal_Koszul_group_coh}, 
this implies that $\Psi_3$ on the given basis element vanishes. 
To construct $\rr$, 
we use Lemma \ref{lemma:matrix_relations} often without explicitly stating that the corresponding matrices 
satisfy the required relation whenever it is trivial to check it. 

We begin with elements in $\SSS$. 
\begin{itemize}
\item Elements of the form $\chi_i \ot \chi_i \ot \chi_i$: 
We define a continuous group homomorphism $\rr^{\SSS}_i \colon G \to U_4(\F_p)$ 
by setting $\rr^{\SSS}_i(x_i) = B_{1,1,1}$ and $\rr^{\SSS}_i(x_j) = I_4$ for $j \ne i$.  
We define the continuous map $\vartheta^{\SSS}_{i} \colon G \to \F_p$ by $g \mapsto \vartheta^{\SSS}_{i}(g) \coloneqq - e_{14}(\rr^{\SSS}_{i}(g))$.  
By construction of $\Psi_3$ and the map $f_2$, we have  
\begin{align*}
\delta \vartheta^{\SSS}_{i} = \Psi_3 (\chi_i \ot \chi_i \ot \chi_i). 
\end{align*} 



\item Elements of the form $\chi_i \ot \chi_i \ot \chi_j$ with $i\ne j$ and $i \ne j-1$ if $j$ is even: 
We define a continuous group homomorphism $\rr^{\SSS}_{iij} \colon G \to U_4(\F_p)$ 
by setting 
\begin{align*}
\rr^{\SSS}_{iij}(x_i) = B_{1,1,0}, ~ \rr^{\SSS}_{iij}(x_j) = B_{0,0,1}, 
\end{align*}
and $\rr^{\SSS}_{iij}(x_k) = I_4$ for $k \ne i,j$.  
We define the continuous map $\vartheta^{\SSS}_{iij} \colon G \to \F_p$ by $g \mapsto \vartheta^{\SSS}_{iij}(g) \coloneqq - e_{14}(\rr^{\SSS}_{iij}(g))$. 
We then have  
\begin{align*}
\delta \vartheta^{\SSS}_{iij} = \Psi_3 (\chi_i \ot \chi_i \ot \chi_j).
\end{align*} 



\item Elements of the form $\chi_i \ot \chi_j \ot \chi_j$ with $i\ne j$ and $i \ne j-1$ if $j$ is even: 
We define a continuous group homomorphism $\rr^{\SSS}_{ijj} \colon G \to U_4(\F_p)$ 
by setting 
\begin{align*}
\rr^{\SSS}_{ijj}(x_i) = B_{1,0,0}, ~ \rr^{\SSS}_{ijj}(x_j) = B_{0,1,1}, 
\end{align*}
and $\rr^{\SSS}_{ijj}(x_k) = I_4$ for $k \ne i,j$.  
We define a continuous map $\vartheta^{\SSS}_{ijj} \colon G \to \F_p$ by $g \mapsto \vartheta^{\SSS}_{ijj}(g) \coloneqq - e_{14}(\rr^{\SSS}_{ijj}(g))$. 
We then have  
\begin{align*}
\delta \vartheta^{\SSS}_{ijj} = \Psi_3 (\chi_i \ot \chi_j \ot \chi_j).
\end{align*} 


\item Elements of the form $\chi_i \ot \chi_j \ot \chi_k$ with $i,k\ne j$, $i \ne j-1$ if $j$ is even and $k \ne j+1$ if $j$ is odd: 
We define a continuous group homomorphism $\rr^{\SSS}_{ijk} \colon G \to U_4(\F_p)$ 
by setting 
\begin{align*}
\rr^{\SSS}_{ijk}(x_i) = B_{1,0,0}, ~ 
\rr^{\SSS}_{ijk}(x_j) = B_{0,1,0}, ~ 
\rr^{\SSS}_{ijk}(x_k) = B_{0,0,1}, 
\end{align*}
and $\rr^{\SSS}_{ijk}(x_l) = I_4$ for $l \ne i,j,k$.  
We define the continuous map $\vartheta^{\SSS}_{ijk} \colon G \to \F_p$ by $g \mapsto \vartheta^{\SSS}_{ijk}(g) \coloneqq - e_{14}(\rr^{\SSS}_{ijk}(g))$, 
and we have 
\begin{align*}
\delta \vartheta^{\SSS}_{ijk} = \Psi_3 (\chi_i \ot \chi_j \ot \chi_k).
\end{align*} 

\end{itemize}


This finishes the proof for the set $\SSS$. 
Next we consider elements in $\DDD$. 
First, we consider $1\le i \le d/2$ and $k \ne 2i-1, 2i$. 
\begin{itemize}

\item Elements of the form $\chi_k \ot (\chi_{2i-1} \ot \chi_{2i} + \chi_{2i} \ot \chi_{2i-1})$: 
We have 
\begin{align*}
& ~ \chi_k \ot (\chi_{2i-1} + \chi_{2i})^{\ot 2} \\
= & ~ \chi_k \ot (\chi_{2i-1} \ot \chi_{2i} + \chi_{2i} \ot \chi_{2i-1}) + \chi_{k,2i-1,2i-1} + \chi_{k,2i,2i}.
\end{align*}
Since we already know that $\kappa_3$ vanishes on $\chi_{k,2i-1,2i-1}$ and $\chi_{k,2i,2i}$, 
and $\kappa_3$ is a linear map, 
it suffices to determine a cochain whose boundary is $\Psi_3 (\chi_k \ot (\chi_{2i-1} + \chi_{2i})^{\ot 2})$. 
By Lemma \ref{lemma:matrix_relations}, 
we can define a continuous group homomorphism $\rr^{\DDD}_{i,k,\li} \colon G \to U_4(\F_p)$ 
by setting 
\[
\rr^{\DDD}_{i,k,\li}(x_{k}) = B_{1,0,0}, \rr^{\DDD}_{i,k,\li}(x_{2i-1}) = \rr^{\DDD}_{i,k,\li}(x_{2i}) = B_{0,1,1}, 
\] 
and $\rr^{\DDD}_{i,k,\li}(x_j) = I_4$ for $j \ne 2i-1, 2i, k$. 
We define the continuous map $\vartheta^{\DDD}_{i,k,\li} \colon G \to \F_p$ by $g \mapsto - e_{14}(\rr^{\DDD}_{i,k,\li}(g))$ 
and we get 
\begin{align*}
\delta \vartheta^{\DDD}_{i,k,\li} = \Psi_3 (\chi_k \ot (\chi_{2i-1} + \chi_{2i})^{\ot 2}). 
\end{align*}


\item Elements of the form $(\chi_{2i-1} \ot \chi_{2i} + \chi_{2i} \ot \chi_{2i-1}) \ot \chi_k$: 
We have 
\begin{align*}
& ~ (\chi_{2i-1} + \chi_{2i})^{\ot 2} \ot \chi_k \\
= & ~ (\chi_{2i-1} \ot \chi_{2i} + \chi_{2i} \ot \chi_{2i-1}) \ot \chi_k + \chi_{2i-1,2i-1,k} + \chi_{2i,2i,k}.
\end{align*}
We already know that $\kappa_3$ vanishes on $\chi_{2i-1,2i-1,k}$ and $\chi_{2i,2i,k}$, 
it suffices to determine a cochain whose boundary is $\Psi_3 ((\chi_{2i-1} + \chi_{2i})^{\ot 2} \ot \chi_k)$. 
Since 
\begin{align*}
[B_{1,1,0},B_{1,1,0}] = I_4 ~ \text{and} ~ [B_{0,0,1},I_4] = I_4 ~ \text{in} ~ U_4(\F_p),
\end{align*} 
we can define a continuous group homomorphism $\rr^{\DDD}_{i,k,\re} \colon G \to U_4(\F_p)$ 
by setting 
\[
\rr^{\DDD}_{i,k,\re}(x_{k}) = B_{0,0,1}, \rr^{\DDD}_{i,k,\re}(x_{2i-1}) = \rr^{\DDD}_{i,k,\re}(x_{2i}) = B_{1,1,0}, 
\] 
and $\rr^{\DDD}_{i,k,\re}(x_j) = I_4$ for $j \ne 2i-1, 2i, k$. 
We define the continuous map $\vartheta^{\DDD}_{i,k,\re} \colon G \to \F_p$ defined by $g \mapsto - e_{14}(\rr^{\DDD}_{i,k,\re}(g))$ 
and we set 
\begin{align*}
\delta \vartheta^{\DDD}_{i,k,\re} = \Psi_3 ((\chi_{2i-1} + \chi_{2i})^{\ot 2} \ot \chi_k).   
\end{align*}
\end{itemize}

%
%

Second, we consider $2\le i \le d/2$ and $k \ne 1,2,2i-1,2i$.  
\begin{itemize}

\item Elements of the form $\chi_k \ot (\chi_{1} \ot \chi_{2} + \chi_{2i} \ot \chi_{2i-1})$: 
We have 
\begin{align*}
& \chi_k \ot (\chi_{1} \ot \chi_{2i}) \ot  (\chi_{2} \ot \chi_{2i-1}) \\
= & ~ \chi_k \ot (\chi_{2i-1} \ot \chi_{2i} + \chi_{2i} \ot \chi_{2i-1}) + \chi_{k,1,2i-1} + \chi_{k,2i,2}.
\end{align*}
We know that $\kappa_3$ vanishes on $\chi_{k,1,2i-1}$ and $\chi_{k,2i,2}$.  
Hence, it suffices to determine a cochain whose boundary is $\Psi_3 (\chi_k \ot (\chi_{1} \ot \chi_{2i}) \ot  (\chi_{2} \ot \chi_{2i-1}))$. 
Since 
\begin{align*}
[B_{0,1,0},B_{0,0,1}]\cdot [B_{0,0,1},B_{0,1,0}] \cdot [B_{1,0,0},I_4] = I_4 ~ \text{in} ~ U_4(\F_p),
\end{align*} 
we can define a continuous group homomorphism $\rr^{\DDD}_{1,i,k,\li} \colon G \to U_4(\F_p)$ 
by setting 
\begin{align*}
\rr^{\DDD}_{1,i,k,\li}(x_{k}) = B_{1,0,0}, \rr^{\DDD}_{1,i,k,\li}(x_{1}) = \rr^{\DDD}_{1,i,k,\li}(x_{2i}) = B_{0,1,1}, \\
\rr^{\DDD}_{1,i,k,\li}(x_{2}) = \rr^{\DDD}_{1,i,k,\li}(x_{2i-1}) = B_{0,0,1}
\end{align*}
and $\rr^{\DDD}_{1,i,k,\li}(x_j) = I_4$ for $j \ne 2i-1, 2i, k$. 
We let $\vartheta^{\DDD}_{1,i,k,\li} \colon G \to \F_p$ be the continuous map defined by $g \mapsto - e_{14}(\rr^{\DDD}_{1,i,k,\li}(g))$ 
and we get 
\begin{align*}
\delta \vartheta^{\DDD}_{1,i,k,\li} = \Psi_3 (\chi_k \ot (\chi_{2i-1} + \chi_{2i})^{\ot 2}). 
\end{align*}

\item Elements of the form $(\chi_{1} \ot \chi_{2} + \chi_{2i} \ot \chi_{2i-1}) \ot \chi_k$: 
We have 
\begin{align*}
& (\chi_{1} \ot \chi_{2i}) \ot  (\chi_{2} \ot \chi_{2i-1}) \ot \chi_k \\
= & ~ (\chi_{2i-1} \ot \chi_{2i} + \chi_{2i} \ot \chi_{2i-1}) \ot \chi_k + \chi_{1,2i-1,k} + \chi_{2i,2,k}.
\end{align*}
We have defined cochains whose boundaries are $\Psi_3(\chi_{1,2i-1,k})$ and $\Psi_3(\chi_{2i,2,k})$, respectively. 
Hence, it suffices to determine a cochain whose boundary is $\Psi_3 ((\chi_{1} \ot \chi_{2i}) \ot  (\chi_{2} \ot \chi_{2i-1}) \ot \chi_k)$. 
Since 
\begin{align*}
[B_{1,0,0},B_{0,1,0}]\cdot [B_{0,1,0},B_{1,0,0}] \cdot [B_{0,0,1},I_4] = I_4 ~ \text{in} ~ U_4(\F_p),
\end{align*} 
we can define a continuous group homomorphism $\rr^{\DDD}_{1,i,k,\re} \colon G \to U_4(\F_p)$ 
by setting 
\begin{align*}
\rr^{\DDD}_{1,i,k,\re}(x_{k}) = B_{001}, \rr^{\DDD}_{1,i,k,\re}(x_{1}) = \rr^{\DDD}_{1,i,k,\re}(x_{2i}) = B_{1,1,0}, \\
\rr^{\DDD}_{1,i,k,\re}(x_{2}) = \rr^{\DDD}_{1,i,k,\re}(x_{2i-1}) = B_{0,1,0}
\end{align*}
and $\rr^{\DDD}_{1,i,k,\re}(x_j) = I_4$ for $j \ne 1, 2, 2i-1, 2i, k$. 
We let $\vartheta^{\DDD}_{1,i,k,\re} \colon G \to \F_p$ be the continuous map defined by $g \mapsto - e_{14}(\rr^{\DDD}_{1,i,k,\re}(g))$ 
and we get 
\begin{align*}
\delta \vartheta^{\DDD}_{1,i,k,\re} = \Psi_3 ((\chi_{1} \ot \chi_{2i}) \ot  (\chi_{2} \ot \chi_{2i-1}) \ot \chi_k). 
\end{align*}

\end{itemize}


This proves the assertion for the set $\DDD$. 
Next, we consider elements in $\TTT$:  
%
We have  
\begin{align*}
(\chi_{2i-1} + \chi_{2i})^{\ot 3} = & ~ \chi_{2i-1}^{\ot 3} + \chi_{2i}^{\ot 3} \\
& + (\chi_{2i} \ot \chi_{2i-1} \ot \chi_{2i-1} + \chi_{2i-1} \ot \chi_{2i} \ot \chi_{2i-1} 
+ \chi_{2i-1} \ot \chi_{2i-1} \ot \chi_{2i}) \\
& + (\chi_{2i-1} \ot \chi_{2i} \ot \chi_{2i} + \chi_{2i} \ot \chi_{2i-1} \ot \chi_{2i} + \chi_{2i} \ot \chi_{2i} \ot \chi_{2i-1}),
\end{align*}
and
\begin{align*}
(\chi_{2i-1} - \chi_{2i})^{\ot 3} = & ~ \chi_{2i-1}^{\ot 3} - \chi_{2i}^{\ot 3} \\
& -  (\chi_{2i} \ot \chi_{2i-1} \ot \chi_{2i-1} + \chi_{2i-1} \ot \chi_{2i} \ot \chi_{2i-1} 
+ \chi_{2i-1} \ot \chi_{2i-1} \ot \chi_{2i}) \\
& + (\chi_{2i-1} \ot \chi_{2i} \ot \chi_{2i} + \chi_{2i} \ot \chi_{2i-1} \ot \chi_{2i} + \chi_{2i} \ot \chi_{2i} \ot \chi_{2i-1}). 
\end{align*}
Since we know that $\kappa_3$ vanishes on $\chi_{2i-1}^{\ot 3}$ and $\chi_{2i}^{\ot 3}$ 
and since $p$ is odd, 
it thus suffices to determine cochains whose boundaries are $\Psi_3((\chi_{2i-1} + \chi_{2i})^{\ot 3})$ and $\Psi_3((\chi_{2i-1} - \chi_{2i})^{\ot 3})$, 
respectively.  

\begin{itemize}
\item First, we define a continuous group homomorphism $\rr^{\TTT}_{i,+} \colon G \to U_4(\F_p)$ 
by setting 
\[
\rr^{\TTT}_{i,+}(x_{2i-1}) = \rr^{\TTT}_{i,+}(x_{2i}) = B_{1,1,1}, 
\]
and $\rr^{\TTT}_{i,+}(x_j) = I_4$ for $j \ne 2i-1, 2i$.  
We define the continuous map $\vartheta^{\TTT}_{i,+} \colon G \to \F_p$ by $g \mapsto - e_{14}(\rr^{\TTT}_{i,+}(g))$ 
and we get 
\begin{align*}
\delta \vartheta^{\TTT}_{i,+} = \Psi_3 ((\chi_{2i-1} + \chi_{2i})^{\ot 3}). 
\end{align*} 


\item Second, since $B_{1,1,1}^q=I_4$ and $[B_{1,1,1}, B_{-1,-1,-1}]=I_4$, 
we can define a continuous group homomorphism $\rr^{\TTT}_{i,-} \colon G \to U_4(\F_p)$ 
by setting 
\[
\rr^{\TTT}_{i,-}(x_{2i-1}) = B_{1,1,1}, ~ \rr^{\TTT}_{i,-}(x_{2i}) = B_{-1,-1,-1}, 
\]
and $\rr^{\TTT}_{i,-}(x_j) = I_4$ for $j \ne 2i-1, 2i$.  
We define the continuous map $\vartheta^{\TTT}_{i,-} \colon G \to \F_p$ by $g \mapsto - e_{14}(\rr^{\TTT}_{i,-}(g))$ 
and we get 
\begin{align*}
\delta\vartheta^{\TTT}_{i,-} = \Psi_3 ((\chi_{2i-1} - \chi_{2i})^{\ot 3}).
\end{align*} 

\end{itemize}


This proves the assertion for the set $\TTT$.  
It remains to consider the elements in $\DDDwr$.  
We let $2\le i \le d/2$. 
First we consider elements of the form $(\chi_{2} \ot \chi_{1} + \chi_{2i-1} \ot \chi_{2i}) \ot \chi_1$ 
and $(\chi_{2} \ot \chi_{1} + \chi_{2i-1} \ot \chi_{2i}) \ot \chi_{2i}$. 
Note that we have 
\begin{align*}
& (\chi_2 + \chi_{2i-1}) \ot (\chi_1 + \chi_{2i}) \ot (\chi_1 + \chi_{2i}) \\ 
= & ~ (\chi_{2,1,1} +  \chi_{2i-1,2i,1} ) + (\chi_{2,1,2i} + \chi_{2i-1,2i,2i}) \\
& ~ + \chi_{2,2i,1} + \chi_{2,2i,2i}  + \chi_{2i-1,1,2i}  + \chi_{2i-1,1,1} 
\end{align*}
and 
\begin{align*}
 & (\chi_2 - \chi_{2i-1}) \ot (-\chi_1 + \chi_{2i}) \ot (\chi_1 - \chi_{2i}) \\
= & ~ - (\chi_{2,1,1} +  \chi_{2i-1,2i,1} ) + (\chi_{2,1,2i} + \chi_{2i-1,2i,2i}) \\
& ~ + \chi_{2,2i,1} - \chi_{2,2i,2i}  - \chi_{2i-1,1,2i}  + \chi_{2i-1,1,1}. 
\end{align*}
We have defined cochains whose boundaries are $\Psi_3$ evaluated on each summand not in parentheses in the above sums. 
Thus, in order to find cochains whose boundaries are $\Psi_3(\chi_{2,1,1} +  \chi_{2i-1,2i,1} )$ and $\Psi_3(\chi_{2,1,2i} + \chi_{2i-1,2i,2i})$, respectively, 
it suffices to determine cochains whose boundaries are, respectively,  
\begin{align*}
\Psi_3((\chi_2 + \chi_{2i-1}) \ot (\chi_1 + \chi_{2i}) \ot (\chi_1 + \chi_{2i}))
\end{align*}
and 
\begin{align*}
\Psi_3((\chi_2 - \chi_{2i-1}) \ot (-\chi_1 + \chi_{2i}) \ot (\chi_1 - \chi_{2i})).
 \end{align*} 
\begin{itemize}

\item Since we have 
\begin{align*}
[B_{0,1,1},B_{1,0,0}]\cdot [B_{1,0,0},B_{0,1,1}] = I_4 ~ \text{in} ~ U_4(\F_p),
\end{align*}
we can define a continuous group homomorphism $\rr^{\DDDwr}_{1,2i,+} \colon G \to U_4(\F_p)$ 
by setting 
\begin{align*}
\rr^{\DDDwr}_{1,2i,+}(x_{1}) = \rr^{\DDDwr}_{1,2i,+}(x_{2i}) = B_{0,1,1}, 
\rr^{\DDDwr}_{1,2i,+}(x_{2}) = \rr^{\DDDwr}_{1,2i,+}(x_{2i-1}) = B_{1,0,0}, 
\end{align*}
and $\rr^{\DDDwr}_{1,2i,+}(x_j) = I_4$ for $j \ne 1,2,2i-1,2i$. 
We let $\vartheta^{\DDDwr}_{1,2i,+} \colon G \to \F_p$ be the continuous map defined by $g \mapsto - e_{14}(\rr^{\DDDwr}_{1,2i,+}(g))$ 
and we get 
\begin{align*}
\delta \vartheta^{\DDDwr}_{1,2i,+} = \Psi_3 ((\chi_2 + \chi_{2i-1}) \ot (\chi_1 + \chi_{2i}) \ot (\chi_1 + \chi_{2i})).
\end{align*} 

\item Next we observe that 
\begin{align*}
[B_{0,-1,1},B_{1,0,0}]\cdot [B_{-1,0,0},B_{0,1,-1}] = I_4 ~ \text{in} ~ U_4(\F_p). 
\end{align*}
Thus, we can define a continuous group homomorphism $\rr^{\DDDwr}_{1,2i,-} \colon G \to U_4(\F_p)$ 
by setting 
\begin{align*}
& \rr^{\DDDwr}_{1,2i,-} (x_{1}) = B_{0,-1,1},  \rr^{\DDDwr}_{1,2i,-}(x_{2}) = B_{1,0,0}, \\
\text{and} ~ & ~ 
\rr^{\DDDwr}_{1,2i,-}(x_{2i-1}) = B_{-1,0,0},  \rr^{\DDDwr}_{1,2i,-}(x_{2i}) = B_{0,1,-1}, 
\end{align*}
and $\rr^{\DDDw}_{i,\re,-}(x_j) = I_4$ for $j \ne 1,2,2i-1,2i$. 
We let $\vartheta^{\DDDwr}_{1,2i,-} \colon G \to \F_p$ be the continuous map defined by $g \mapsto - e_{14}(\rr^{\DDDwr}_{1,2i,-}(g))$ 
and get  
\begin{align*}
\delta \vartheta^{\DDDwr}_{1,2i,-} = \Psi_3 ((\chi_2 - \chi_{2i-1}) \ot (-\chi_1 + \chi_{2i}) \ot (\chi_1 - \chi_{2i})). 
\end{align*}

\end{itemize}


Next, we consider elements of the form $(\chi_{1} \ot \chi_{2} + \chi_{2i} \ot \chi_{2i-1}) \ot \chi_k$ for $k = 2$ and $k=2i-1$. 
We have 
\begin{align*}
& (\chi_1 + \chi_{2i}) \ot (\chi_2 + \chi_{2i-1}) \ot (\chi_2 + \chi_{2i-1}) \\ 
= & ~ (\chi_{1,2,2} + \chi_{2i,2i-1,2}) + (\chi_{1,2,2i-1} + \chi_{2i,2i-1,2i-1}) \\
& ~ + \chi_{1,2,2i-1} + \chi_{1,2i-1,2i-1} + \chi_{2i,2,2} + \chi_{2i,2,2i-1}
\end{align*}
and 
\begin{align*}
& (\chi_1 - \chi_{2i}) \ot (\chi_2 - \chi_{2i-1}) \ot (-\chi_2 + \chi_{2i-1}) \\ 
= & ~ - (\chi_{1,2,2} + \chi_{2i,2i-1,2}) + (\chi_{1,2,2i-1} + \chi_{2i,2i-1,2i-1}) \\
& ~ + \chi_{1,2,2i-1} - \chi_{1,2i-1,2i-1} + \chi_{2i,2,2} - \chi_{2i,2,2i-1}.
\end{align*}
Since we have already shown that $\kappa_3$ vanishes on all terms in the above sums 
except from 
$(\chi_{1,2,2} + \chi_{2i,2i-1,2})$ and $(\chi_{1,2,2i-1} + \chi_{2i,2i-1,2i-1})$, 
it remains to show that $\kappa_3$ vanishes on the two tensor products 
\begin{align*}
(\chi_1 + \chi_{2i}) \ot (\chi_2 + \chi_{2i-1}) \ot (\chi_2 + \chi_{2i-1}) \\
\text{and} ~ (\chi_1 - \chi_{2i}) \ot (\chi_2 - \chi_{2i-1}) \ot (-\chi_2 + \chi_{2i-1}). 
 \end{align*} 
\begin{itemize}

\item Since 
\begin{align*}
[B_{1,0,0},B_{0,1,1}]\cdot [B_{0,1,1},B_{1,0,0}] = I_4 ~ \text{in} ~ U_4(\F_p),
\end{align*}
we can define a continuous group homomorphism $\rr^{\DDDwr}_{2,2i-1,+} \colon G \to U_4(\F_p)$ 
by setting 
\begin{align*}
\rr^{\DDDwr}_{2,2i-1,+}(x_{1}) & = \rr^{\DDDwr}_{2,2i-1,+}(x_{2i}) = B_{1,0,0}, \\
\rr^{\DDDwr}_{2,2i-1,+}(x_{2}) & = \rr^{\DDDwr}_{2,2i-1,+}(x_{2i-1}) = B_{0,1,1}, 
\end{align*}
and $\rr^{\DDDwr}_{2,2i-1,+}(x_j) = I_4$ for $j \ne 1,2,2i-1,2i$. 
We let $\vartheta^{\DDDwr}_{2,2i-1i,+} \colon G \to \F_p$ be the continuous map defined by $g \mapsto - e_{14}(\rr^{\DDDwr}_{2,2i-1,+}(g))$ 
and we get 
\begin{align*}
\delta \vartheta^{\DDDwr}_{2,2i-1,+} = \Psi_3 ((\chi_1 + \chi_{2i}) \ot (\chi_2 + \chi_{2i-1}) \ot (\chi_2 + \chi_{2i-1})). 
\end{align*}

\item We observe that 
\begin{align*}
[B_{1,0,0},B_{0,1,-1}]\cdot [B_{0,-1,1},B_{-1,0,0}] = I_4 ~ \text{in} ~ U_4(\F_p). 
\end{align*}
Thus, we can define a continuous group homomorphism $\rr^{\DDDwr}_{2,2i-1,-} \colon G \to U_4(\F_p)$ 
by setting 
\begin{align*}
& \rr^{\DDDwr}_{2,2i-1,-}(x_{1}) = B_{1,0,0}, 
\rr^{\DDDwr}_{2,2i-1,-}(x_{2}) = B_{0,1,-1}, \\
\text{and} ~ & ~ 
\rr^{\DDDwr}_{2,2i-1,-}(x_{2i-1}) = B_{0,-1,1}, 
\rr^{\DDDwr}_{2,2i-1,-}(x_{2i}) = B_{-1,0,0}, 
\end{align*}
and $\rr^{\DDDwr}_{2,2i-1,-}(x_j) = I_4$ for $j \ne 1,2,2i-1,2i$.  
We let $\vartheta^{\DDDwr}_{2,2i-1,-} \colon G \to \F_p$ be the continuous map defined by $g \mapsto - e_{14}(\rr^{\DDDwr}_{2,2i-1,-}(g))$ 
and get 
\begin{align*}
\delta \vartheta^{\DDDwr}_{2,2i-1,-} = \Psi_3 ((\chi_1 - \chi_{2i}) \ot (\chi_2 - \chi_{2i-1}) \ot (-\chi_2 + \chi_{2i-1})). 
\end{align*}

\end{itemize}

This proves the assertion for the set $\DDDwr$ and finishes the proof. 
\end{proof}


Next we show that $\kappa_3$ is not trivial for $d\ge 4$. 
However, we also show how all the nonzero values of $\kappa_3$ are related. 

\begin{lemma}\label{lemma:kappa_3_does_not_vanish}
The map $\kappa_3$ does not vanish on the basis elements in the subset $\DDDwl \subset \DDDw$  ($l$ for left-multiplication) 
defined by 
\begin{align*}
\DDDwl = & ~ \{ \chi_{1} \ot (\chi_{1} \ot \chi_{2} + \chi_{2i} \ot \chi_{2i-1}), 
\chi_{2i} \ot (\chi_{1} \ot \chi_{2} + \chi_{2i} \ot \chi_{2i-1}), \\
& ~ \chi_{2} \ot (\chi_{2} \ot \chi_{1} + \chi_{2i-1} \ot \chi_{2i}), 
\chi_{2i-1} \ot (\chi_{2} \ot \chi_{1} + \chi_{2i-1} \ot \chi_{2i}) ~ \text{for} ~ 2 \le i \le d/2
\}. 
\end{align*}
We have the relations 
\begin{align*}
\kappa_3(\chi_{1} \ot (\chi_{1} \ot \chi_{2} + \chi_{2i} \ot \chi_{2i-1})) + \kappa_3(\chi_{2i} \ot (\chi_{1} \ot \chi_{2} + \chi_{2i} \ot \chi_{2i-1})) & = 0, \\
\kappa_3(\chi_{2} \ot (\chi_{2} \ot \chi_{1} + \chi_{2i-1} \ot \chi_{2i})) + \kappa_3(\chi_{2i-1} \ot (\chi_{2} \ot \chi_{1} + \chi_{2i-1} \ot \chi_{2i})) & = 0, \\
\text{and} ~ 
\kappa_3(\chi_{1} \ot (\chi_{1} \ot \chi_{2} + \chi_{2i} \ot \chi_{2i-1})) + \kappa_3(\chi_{2} \ot (\chi_{2} \ot \chi_{1} + \chi_{2i-1} \ot \chi_{2i})) & = 0 
\end{align*}
for all $2 \le i \le d/2$, and 
\begin{align}\label{eq:kappa3_relation_i_and_j}
\kappa_3(\chi_{1} \ot (\chi_{1} \ot \chi_{2} + \chi_{2i} \ot \chi_{2i-1})) = \kappa_3(\chi_{1} \ot (\chi_{1} \ot \chi_{2} + \chi_{2j} \ot \chi_{2j-1})) 
\end{align}
for $2 \le i,j \le d/2$.  
\end{lemma}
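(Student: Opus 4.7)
The plan is to parallel the scheme of Lemma~\ref{lemma:kappa_3_vanishes_on_many_subsets}, but now the lift to $U_4(\F_p)$ will provably fail. For each basis element $E$ of $\DDDwl$, I will express $E$ modulo $\vspan(\SSS)$ as a decomposable tensor $\psi_1 \otimes \psi_2 \otimes \psi_3 \in K_3^3(\Hb)$ with $\psi_1 \cup \psi_2 = 0 = \psi_2 \cup \psi_3$; by Lemma~\ref{lemma:kappa_3_vanishes_on_many_subsets} the value of $\kappa_3$ is unchanged by this reduction. By Remark~\ref{rem:A3_formal_Koszul_group_coh} and Corollary~\ref{cor:Dwyer_consequence}, $\kappa_3$ on such a tensor is represented by the obstruction to extending the defining system induced by our chosen $f_2$ to a continuous homomorphism $G \to U_4(\F_p)$, which reduces to computing $M_1^q \cdot [M_1,M_2] \cdots [M_{d-1},M_d]$ in $U_4(\F_p)$. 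For inputs in $\DDDwl$ this product will be a nontrivial central element, namely a scalar multiple of the elementary matrix $E_{14}$, and this scalar will identify $\kappa_3$ in $H^2 \cong \F_p \cdot (\chi_1 \cup \chi_2)$.

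I illustrate non-vanishing on $E = \chi_1 \otimes (\chi_1 \otimes \chi_2 + \chi_{2i} \otimes \chi_{2i-1})$ by taking $\psi_1 = \chi_1$, $\psi_2 = \chi_1 + \chi_{2i}$, $\psi_3 = \chi_2 + \chi_{2i-1}$, using $\chi_{2i} \cup \chi_{2i-1} = -\chi_1 \cup \chi_2$ to verify the vanishing of the neighbouring cup products. The induced matrices are $M_1 = B_{1,1,0}$, $M_2 = M_{2i-1} = B_{0,0,1}$, $M_{2i} = B_{0,1,0}$, and $M_j = I_4$ otherwise; Lemma~\ref{lemma:B^q=I_4} gives $M_1^q = I_4$, and a direct commutator calculation yields
\[
M_1^q \cdot [M_1,M_2] \cdot [M_{2i-1},M_{2i}] = (I_4 + E_{14} + E_{24})(I_4 - E_{24}) = I_4 + E_{14},
\]
where $E_{jk}$ denotes the elementary matrix with a single $1$ in position $(j,k)$. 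Since this is not $I_4$, Corollary~\ref{cor:Dwyer_consequence} yields $\kappa_3(E) = \chi_1 \cup \chi_2 \ne 0$. The other seven basis elements of $\DDDwl$ will be handled by analogous matrix choices, each producing a single $\pm E_{14}$.

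The three identities will be obtained by applying the same correspondence to suitable sums or differences of $\DDDwl$-elements for which the Demushkin relation holds exactly in $U_4(\F_p)$. The first identity uses $\psi_1 = \psi_2 = \chi_1 + \chi_{2i}$, $\psi_3 = \chi_2 + \chi_{2i-1}$: the two relevant commutators $[B_{1,1,0},B_{0,0,1}]$ and $[B_{0,0,1},B_{1,1,0}]$ are mutual inverses in $U_4(\F_p)$, so the lift exists and $\kappa_3$ of the sum vanishes; the second identity follows by the symmetric choice $\psi_1 = \psi_2 = \chi_2 + \chi_{2i-1}$, $\psi_3 = \chi_1 + \chi_{2i}$. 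For the third identity, additivity of $\kappa_3$ and a congruence modulo $\vspan(\SSS)$ reduce the statement to summing the $(1,4)$-obstructions of $\chi_1 \otimes (\chi_1 + \chi_{2i}) \otimes (\chi_2 + \chi_{2i-1})$ and $\chi_2 \otimes (\chi_2 + \chi_{2i-1}) \otimes (\chi_1 + \chi_{2i})$, which by the same type of calculation come out to $+E_{14}$ and $-E_{14}$ respectively. Finally, \eqref{eq:kappa3_relation_i_and_j} is proved via the triple $\psi_1 = \chi_1$, $\psi_2 = \chi_{2i} - \chi_{2j}$, $\psi_3 = \chi_{2i-1} + \chi_{2j-1}$, whose two pertinent commutators $[B_{0,0,1},B_{0,1,0}] = I_4 - E_{24}$ and $[B_{0,0,1},B_{0,-1,0}] = I_4 + E_{24}$ multiply to $I_4$. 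The main technical obstacle throughout will be tracking signs in the commutator calculations and verifying, for each combination of $\DDDwl$-elements, that the residual terms after extracting the chosen tensor product indeed lie in $\vspan(\SSS)$.
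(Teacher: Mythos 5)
Your overall strategy is the paper's: reduce each element of $\DDDwl$ modulo terms already handled in Lemma \ref{lemma:kappa_3_vanishes_on_many_subsets} to a decomposable tensor in $K_3^3(\Hb)$, and test via Dwyer (Theorem \ref{thm:Dwyer_alt1}, Corollary \ref{cor:Dwyer_consequence}) whether the corresponding homomorphism $G \to \bU_4(\F_p)$ lifts to $U_4(\F_p)$ by evaluating the relator on the $B$-matrices. Your treatment of the first two identities coincides with the paper's; your decomposable $\chi_1\ot(\chi_{2i}-\chi_{2j})\ot(\chi_{2i-1}+\chi_{2j-1})$ for \eqref{eq:kappa3_relation_i_and_j} is a correct, slightly different alternative to the paper's $\chi_1\ot(\chi_{2i}+\chi_{2j-1})\ot(\chi_{2i-1}+\chi_{2j})$; and proving non-vanishing directly on each element of $\DDDwl$ (instead of the paper's route of one non-liftable example combined with the relations) is legitimate, since for that you only need the relator to evaluate to a non-identity central element. (You do also need, as the paper does implicitly, that the $(1,3)$- and $(2,4)$-entries dictated by $f_2$ vanish on all generators, so that your $B$-matrices really are the values of $\brr$; this holds because all the $\eta$'s are built from matrices with trivial $(1,3)$-entry.)

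Two problems remain. First, a sign slip: $[B_{1,1,0},B_{0,0,1}] = I_4 - E_{14} + E_{24}$, not $I_4 + E_{14} + E_{24}$, so the relator evaluates to $I_4 - E_{14}$; this still gives non-vanishing, but it means the exact scalars you quote (and the claimed value $\kappa_3(E)=\chi_1\cup\chi_2$, as well as the ``$+E_{14}$ and $-E_{14}$'' in the third identity) cannot be trusted as written. Second, and this is the genuine gap: for the third identity you compute the $(1,4)$-defects of \emph{two different} homomorphisms, neither of which lifts, and conclude that the two classes in $H^2$ cancel. The results you invoke only give a vanishing criterion (the lift exists if and only if the class is trivial); they do not identify the central defect of the relator with the value of $\kappa_3$ under a fixed isomorphism $H^2 \cong \F_p$, which is exactly what adding the two scalars requires. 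Since $H^2$ is one-dimensional, knowing both classes are nonzero says nothing about their sum. You would either have to prove this quantitative refinement of Dwyer's correspondence (true, via the standard obstruction computation for a one-relator pro-$p$ group, but not established by Theorem \ref{thm:Dwyer_alt1} or Corollary \ref{cor:Dwyer_consequence} as stated), or argue as the paper does: combine the two terms into a single element of $K_3^3(\Hb)$, for instance $(\chi_1-\chi_2)\ot(\chi_1-\chi_2+\chi_{2i-1})\ot(\chi_1-\chi_{2i})$, exhibit an explicit lift to $U_4(\F_p)$ for it, and conclude by linearity of $\kappa_3$ together with the vanishing on $\SSS$, $\DDD$ and $\TTT$ from Lemma \ref{lemma:kappa_3_vanishes_on_many_subsets}.
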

\begin{proof}
First we show that $\kappa_3$ is non-trivial. 
We compute that 
\begin{align*}
[B_{1,-1,0},B_{0,0,1}]\cdot [B_{0,0,-1},B_{-1,1,0}] = 
\begin{pmatrix} 
1 & 0 & 0 & 2 \\
0 & 1 & 0 & 0 \\
0 &  0 & 1 & 0 \\
0 & 0 & 0 & 1 
\end{pmatrix} 
~ \text{in} ~ U_4(\F_p).
\end{align*}
Thus, the assignment 
\begin{align*}
\brr(x_{1}) = B_{1,-1,0}, 
\brr(x_{2}) = B_{0,0,1}, 
\brr(x_{2i-1}) = B_{0,0,-1}, 
\brr(x_{2i}) = B_{-1,1,0}, 
\end{align*}
and $\brr(x_j) = I_4$ for $j \ne 1,2,2i-1,2i$ 
defines a continuous group homomorphism 
\begin{align*}
\brr \colon G \to \bU_4(\F_p) = U_4(\F_p)/Z
\end{align*}
which does not lift to a continuous group homomorphism 
$\rr \colon G \to U_4(\F_p)$ since $p$ is odd. 
This shows that 
$\Psi_3 ((\chi_1 - \chi_{2i}) \ot (-\chi_1 + \chi_{2i}) \ot (\chi_2 - \chi_{2i-1}))$ 
is not the boundary of a cochain in $\Ch^1$ by Corollary \ref{cor:Dwyer_consequence}. 
This shows 
\begin{align*}
\kappa_3 ((\chi_1 - \chi_{2i}) \ot (-\chi_1 + \chi_{2i}) \ot (\chi_2 - \chi_{2i-1})) \ne 0, 
\end{align*} 
i.e., $\kappa_3$ is a non-trivial map. 
We note that 
\begin{align*}
& (\chi_1 - \chi_{2i}) \ot (-\chi_1 + \chi_{2i}) \ot (\chi_2 - \chi_{2i-1}) \\
= & - (\chi_{1,1,2} + \chi_{1,2i,2i-1}) + (\chi_{2i,1,2} + \chi_{2i,2i,2i-1}) \\
& ~ + \chi_{1,1,2i-1} + \chi_{1,2i,2} - \chi_{2i,1,2i-1} - \chi_{2i,2i,2}.
\end{align*}
%
Since we have shown that $\kappa_3$ vanishes on $\chi_{1,1,2i-1}$, $\chi_{1,2i,2}$, $\chi_{2i,1,2i-1}$, and $\chi_{2i,2i,2}$, 
we get that $\kappa_3$ is non-trivial on $(\chi_{1,1,2} + \chi_{1,2i,2i-1}) - (\chi_{2i,1,2} + \chi_{2i,2i,2i-1})$. 
To prove the first claim, it now suffices to show the asserted relations. 

First we show relation \eqref{eq:kappa3_relation_i_and_j}. 
If $d \le 4$ or $i=j$, the assertion is trivial. 
So we assume $d \ge 6$ and $2 \le i < j \le d/2$. 
Since $\kappa_3$ is multilinear, it suffices to show 
\begin{align*}
\kappa_3(\chi_{1} \ot \chi_{2i} \ot \chi_{2i-1} - \chi_{1} \ot \chi_{2j} \ot \chi_{2j-1})) = 0. 
\end{align*} 
Moreover, since $\chi_1\ot (\chi_{2j-1} \ot \chi_{2j} + \chi_{2j} \ot \chi_{2j-1})\in \DDD$, 
it suffices by Lemma \ref{lemma:kappa_3_vanishes_on_many_subsets} 
to show 
\begin{align*}
\kappa_3(\chi_{1} \ot \chi_{2i} \ot \chi_{2i-1} + \chi_{1} \ot \chi_{2j-1} \ot \chi_{2j}) = 0. 
\end{align*} 
Note that  
\begin{align*}
& \chi_1 \ot (\chi_{2i} + \chi_{2j-1}) \ot (\chi_{2i-1} + \chi_{2j}) \\
= & \chi_{1} \ot \chi_{2i} \ot \chi_{2i-1} + \chi_{1} \ot \chi_{2j-1} \ot \chi_{2j} + \chi_{1} \ot \chi_{2i} \ot \chi_{2j} + \chi_{1} \ot \chi_{2j-1} \ot \chi_{2i-1}. 
\end{align*}
Since $2 \le i < j$, both $\chi_{1} \ot \chi_{2i} \ot \chi_{2j}$ and $\chi_{1} \ot \chi_{2j-1} \ot \chi_{2i-1}$ are in $\SSS$.  
It therefore suffices to show that $\Psi_3(\chi_1 \ot (\chi_{2i} + \chi_{2j-1}) \ot (\chi_{2i-1} + \chi_{2j}))$ is a coboundary. 
Since 
\begin{align*}
[B_{1,0,0},I_4] \cdot [B_{0,0,1},B_{0,1,0}] \cdot [B_{0,1,0},B_{0,0,1}] = I_4, 
\end{align*} 
we can define a continuous group homomorphism $\rr^{\DDDwl}_{i,j} \colon G \to U_4(\F_p)$ 
by setting 
\begin{align*}
 & \rr^{\DDDwl}_{i,j}(x_{1}) = B_{1,0,0}, \rr^{\DDDwl}_{i,j}(x_{2}) = I_4, \\
 & \rr^{\DDDwl}_{i,j}(x_{2i-1}) = B_{0,0,1}, \rr^{\DDDwl}_{i,j}(x_{2i}) = B_{0,1,0}, \\
\text{and} ~ & \rr^{\DDDwl}_{i,j}(x_{2j-1}) = B_{0,1,0},  \rr^{\DDDwl}_{i,j}(x_{2j}) = B_{0,0,1},
\end{align*}
and $\rr^{\DDDwl}_{i,j}(x_{k})=I_4$ for all $k \ne 1,2,2i-1,2i,2j-1,2j$. 
We define the continuous map $\vartheta^{\DDDwl}_{i,j} \colon G \to \F_p$ by $g \mapsto - e_{14}(\rr^{\DDDwl}_{i,j}(g))$, 
and we get 
\begin{align*}
\delta \vartheta^{\DDDwl}_{i,j}= \Psi_3 (\chi_1 \ot (\chi_{2i} + \chi_{2j-1}) \ot (\chi_{2i-1} + \chi_{2j})). 
\end{align*}
This proves relation \eqref{eq:kappa3_relation_i_and_j}.  


Now we prove the other relations. 

\begin{itemize}
\item We begin with elements of the form $\chi_1 \ot (\chi_{1} \ot \chi_{2} + \chi_{2i} \ot \chi_{2i-1})$ 
and $\chi_{2i} \ot (\chi_{1} \ot \chi_{2} + \chi_{2i} \ot \chi_{2i-1})$. 
We have 
\begin{align*}
& (\chi_1 + \chi_{2i}) \ot (\chi_1 + \chi_{2i}) \ot (\chi_2 + \chi_{2i-1}) \\ 
= & (\chi_{1,1,2} + \chi_{1,2i,2i-1})  + (\chi_{2i,1,2} + \chi_{2i,2i,2i-1}) \\
& ~ + \chi_{1,1,2i-1} + \chi_{1,2i,2} + \chi_{2i,1,2i-1} + \chi_{2i,2i,2}. 
\end{align*}
We have already shown that $\Psi_3$ applied to the last four summands is a coboundary. 
Thus, it suffices to show that $\Psi_3(\chi_1 + \chi_{2i})^{\ot 2} \ot (\chi_2 + \chi_{2i-1})$ is a coboundary. 
Since 
\begin{align*}
[B_{1,1,0},B_{0,0,1}]\cdot [B_{0,0,1},B_{1,1,0}] = I_4 ~ \text{in} ~ U_4(\F_p),
\end{align*}
we can define a continuous group homomorphism $\rr^{\DDDwl}_{1,2i} \colon G \to U_4(\F_p)$ 
by setting 
\begin{align*}
\rr^{\DDDwl}_{1,2i}(x_{1}) = B_{1,1,0}, \rr^{\DDDwl}_{1,2i}(x_{2}) = B_{0,1,1}, \\
\text{and} ~ \rr^{\DDDwl}_{1,2i}(x_{2i-1}) = B_{0,1,1},  \rr^{\DDDwl}_{1,2i}(x_{2i}) = B_{1,1,0}, 
\end{align*}
and $\rr^{\DDDwl}_{1,2i}(x_j) = I_4$ for $j \ne 1,2,2i-1,2i$. 
We define the continuous map $\vartheta^{\DDDwl}_{1,2i} \colon G \to \F_p$ by $g \mapsto - e_{14}(\rr^{\DDDwl}_{1,2i}(g))$, 
and we get 
\begin{align*}
\delta \vartheta^{\DDDwl}_{1,2i} = \Psi_3 ((\chi_1 + \chi_{2i}) \ot (\chi_1 + \chi_{2i}) \ot (\chi_2 + \chi_{2i-1})). 
\end{align*}

\item Next, we consider elements of the form $  \chi_2 \ot (\chi_{2} \ot \chi_{1} + \chi_{2i-1} \ot \chi_{2i})$ and of the form  
$\chi_{2i-1} \ot (\chi_{2} \ot \chi_{1} + \chi_{2i-1} \ot \chi_{2i})$. 
We have 
\begin{align*}
& (\chi_2 + \chi_{2i-1}) \ot (\chi_2 + \chi_{2i-1}) \ot (\chi_1 + \chi_{2i}) \\ 
= & (\chi_{2,2,1} + \chi_{2,2i-1,2i})   + (\chi_{2i-1,2,1} + \chi_{2i-1,2i-1,2i}) \\
& ~ + \chi_{2,2,2i} + \chi_{2,2i-1,1} + \chi_{2i-1,2,2i} + \chi_{2i-1,2i-1,1}.  
\end{align*}
We have already shown that $\Psi_3$ applied to the last four summands is a coboundary. 
Thus, it suffices to show that $\Psi_3(\chi_2 + \chi_{2i-1})^{\ot 2} \ot (\chi_1 + \chi_{2i})$ is a coboundary as well.  
Since 
\begin{align*}
[B_{0,0,1},B_{1,1,0}]\cdot [B_{1,1,0},B_{0,0,1}] = I_4 ~ \text{in} ~ U_4(\F_p),
\end{align*}
we can define a continuous group homomorphism $\rr^{\DDDwl}_{2,2i-1} \colon G \to U_4(\F_p)$ 
by setting 
\begin{align*}
 & \rr^{\DDDwl}_{2,2i-1}(x_{1}) = B_{0,0,1}, \rr^{\DDDwl}_{2,2i-1}(x_{2}) = B_{1,1,0}, \\ 
\text{and} &~ \rr^{\DDDwl}_{2,2i-1}(x_{2i-1}) = B_{1,1,0}, \rr^{\DDDwl}_{2,2i-1}(x_{2i}) = B_{0,0,1}, 
\end{align*}
and $\rr^{\DDDwl}_{2,2i-1}(x_j) = I_4$ for $j \ne 1,2,2i-1,2i$. 
We define the continuous map $\vartheta^{\DDDwl}_{2,2i-1} \colon G \to \F_p$ by $g \mapsto - e_{14}(\rr^{\DDDwl}_{2,2i-1}(g))$, 
and we get 
\begin{align*}
\delta \vartheta^{\DDDwl}_{2,2i-1} = \Psi_3 ((\chi_2 + \chi_{2i-1}) \ot (\chi_2 + \chi_{2i-1}) \ot (\chi_1 + \chi_{2i})). 
\end{align*}

\item Finally, we consider $\chi_{1} \ot (\chi_{1} \ot \chi_{2} + \chi_{2i} \ot \chi_{2i-1}) + \chi_{2} \ot (\chi_{2} \ot \chi_{1} + \chi_{2i-1} \ot \chi_{2i})$. 
We note that we have already shown that $\kappa_3$ vanishes on 
\begin{align*}
& ~ (\chi_{1,1,2} + \chi_{1,4,3}) + (\chi_{2,1,1} + \chi_{1,2,1} + \chi_{1,2i-1,2i}) \\
= & ~ (\chi_{1,1,2} + \chi_{1,2,1} + \chi_{2,1,1}) + (\chi_{1,2i-1,2i} + \chi_{1,2i,2i-1}), 
\end{align*}
as the latter is a sum of an element in $\TTT$ and an element in $\DDD$. 
Thus, if we can show that  
\begin{align*}
\kappa_3((\chi_{2,2,1} + \chi_{2,2i-1,2i}) - (\chi_{2,1,1} + \chi_{1,2,1} + \chi_{1,2i-1,2i})) = 0, 
\end{align*}
then 
\begin{align*}
\kappa_3((\chi_{1,1,2} + \chi_{1,2i,2i-1}) + (\chi_{2,2,1} + \chi_{2,2i-1,2i})) = 0. 
\end{align*}
We have 
\begin{align*}
& (\chi_1 - \chi_{2}) \ot (\chi_1 - \chi_{2} + \chi_{2i-1}) \ot (\chi_1 - \chi_{2i}) \\ 
= & ~ (\chi_{2,2,1} + \chi_{2,2i-1,2i}) - (\chi_{2,1,1} + \chi_{1,2,1} + \chi_{1,2i-1,2i}) \\
&  ~ ~ + (\chi_{1,2,2i} + \chi_{2,1,2i}) + \chi_{1,1,1} - \chi_{1,1,2i} \\
& ~ ~ + \chi_{1,3,1} - \chi_{2,2,2i} - \chi_{2,2i-1,2i}. 
\end{align*}
We have already shown that $\Psi_3$ on $\chi_{1,2,2i} + \chi_{2,1,2i}$ which is an element in $\DDD$  is a coboundary, 
and that $\Psi_3$ on the last four summands, which are elements in $\SSS$, is a coboundary.  
Thus, it suffices to show that $\Psi_3(\chi_1 - \chi_{2}) \ot (\chi_1 - \chi_{2} + \chi_{2i-1}) \ot (\chi_1 - \chi_{2i})$ is coboundary. 
Since 
\begin{align*}
[B_{1,1,1},B_{-1,-1,0}]\cdot [B_{0,1,0},B_{0,0,-1}] = I_4 ~ \text{in} ~ U_4(\F_p),
\end{align*}
we can define a continuous group homomorphism $\rr^{\DDDwl}_{1,2,i} \colon G \to U_4(\F_p)$ 
by setting 
\begin{align*}
\rr^{\DDDwl}_{1,2,i}(x_{1}) = B_{1,1,1}, 
\rr^{\DDDwl}_{1,2,i}(x_{2}) = B_{-1,-1,0}, \\
\text{and} ~ \rr^{\DDDwl}_{1,2,i}(x_{2i-1}) = B_{0,1,0}, 
\rr^{\DDDwl}_{1,2,i}(x_{2i}) = B_{0,0,-1}.
\end{align*} 
and $\rr^{\DDDwl}_{1,2,i}(x_j) = I_4$ for $j \ne 1,2,2i-1,2i$. 
We define the continuous map $\vartheta^{\DDDwl}_{1,2,i} \colon G \to \F_p$ by $g \mapsto - e_{14}(\rr^{\DDDwl}_{1,2,i}(g))$, 
and we get 
\begin{align*}
\delta \vartheta^{\DDDwl}_{1,2,i} = \Psi_3 ((\chi_1 - \chi_{2}) \ot (\chi_1 - \chi_{2} + \chi_{2i-1}) \ot (\chi_1 - \chi_{2i})). 
\end{align*}
\end{itemize}
This proves the final relation and finishes the proof of the lemma. 
\end{proof}


\subsection{The canonical class is trivial}\label{sec:kappa_3_is_hit}

While $\kappa_3$ is non-trivial as a map for $d\ge 4$, 
its class in $\HH^{3,-1}(\Hb)$ vanishes as the following lemma shows:  

\begin{lemma}\label{lemma:kappa_3_is_hit}
The map $\kappa_3$ is a coboundary in 
$(\uHom_{\F_p}(K_{\bbb}^{\bbb}(\Hb), \Hb[-1]), \dee)$.   
\end{lemma}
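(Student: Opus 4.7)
The plan is to prove that $[\kappa_3]=0$ in Hochschild cohomology by writing down an explicit cochain $\sigma\in\Hom_{\F_p}(R,H^1)=\Hom_{\F_p}(K_2^2(\Hb),\Hb[-1]^2)$ whose coboundary $\partial\sigma$ equals $\kappa_3$. Recall that on a decomposable element $\chi_a\otimes\chi_b\otimes\chi_c\in K_3^3(\Hb)$ the Hochschild differential reads $\partial\sigma(\chi_a\otimes\chi_b\otimes\chi_c)=-\chi_a\cup\sigma(\chi_b\otimes\chi_c)-\sigma(\chi_a\otimes\chi_b)\cup\chi_c$, and for a general element one uses the decompositions in $V\otimes R$ and $R\otimes V$ afforded by $K_3^3(\Hb)\subset V\otimes R\cap R\otimes V$. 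Since $H^2$ is one-dimensional, by Lemma \ref{lemma:kappa_3_does_not_vanish} the map $\kappa_3$ is determined by a single scalar $c'\in\F_p$ via $\kappa_3(\chi_1\otimes\omega_i)=c'(\chi_1\cup\chi_2)$, and the relations stated in that lemma fix all its other nonzero values.

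The cochain I would write down is supported on the ``commutator-type'' elements of the basis $\BBB$ of $R$. Set $\nu_i=\chi_{2i-1}\otimes\chi_{2i}+\chi_{2i}\otimes\chi_{2i-1}$ for $1\le i\le d/2$ and $\omega_i=\chi_1\otimes\chi_2+\chi_{2i}\otimes\chi_{2i-1}$ for $2\le i\le d/2$, and define
\[
\sigma(\nu_i)=-c'(\chi_{2i-1}+\chi_{2i}),\qquad \sigma(\omega_i)=-c'(\chi_2+\chi_{2i-1}),
\]
with $\sigma(\chi_a\otimes\chi_b)=0$ on every remaining single-term basis element of $\BBB$. By linearity, for $\tilde\omega_i=\chi_2\otimes\chi_1+\chi_{2i-1}\otimes\chi_{2i}=\nu_1+\nu_i-\omega_i$ one computes $\sigma(\tilde\omega_i)=-c'\chi_1-c'\chi_{2i}$, which is the computation that drives the $\DDDwl$ cases.

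The verification then proceeds case by case on the basis $\SSS\cup\DDD\cup\DDDw\cup\TTT$ of $K_3^3(\Hb)$ from Lemma \ref{lemma:Demushkin_K_3}. Since $\sigma$ vanishes on every single-term basis element of $\BBB$, one immediately gets $\partial\sigma=0$ on every element of $\SSS$. On $\DDDwl$, direct calculation using the non-degeneracy pattern $\chi_{2i-1}\cup\chi_{2i}=-\chi_{2i}\cup\chi_{2i-1}=\chi_1\cup\chi_2$ in $H^2$ gives $\partial\sigma(\chi_1\otimes\omega_i)=c'(\chi_1\cup\chi_2)$, $\partial\sigma(\chi_{2i}\otimes\omega_i)=-c'(\chi_1\cup\chi_2)$, $\partial\sigma(\chi_2\otimes\tilde\omega_i)=-c'(\chi_1\cup\chi_2)$, and $\partial\sigma(\chi_{2i-1}\otimes\tilde\omega_i)=c'(\chi_1\cup\chi_2)$, matching the values of $\kappa_3$ prescribed by Lemma \ref{lemma:kappa_3_does_not_vanish}. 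On $\DDD$, $\DDDwr$, and $\TTT$, vanishing of $\partial\sigma$ follows by the same bilinear cancellations: the key identities are $c'(\chi_k\cup\chi_{2i-1}+\chi_k\cup\chi_{2i})=0$ for $k\notin\{2i-1,2i\}$ (handling $\chi_k\otimes\nu_i$ and $\nu_i\otimes\chi_k$) and analogous identities for the $\omega_i$-type relations, while the triple sums in $\TTT$ give a pairwise cancellation between the $\chi_{2i-1}\cup\chi_{2i}$ and $\chi_{2i}\cup\chi_{2i-1}$ contributions.

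The main obstacle is not the individual computations but the \emph{simultaneous} consistency of all constraints: each non-vanishing value of $\sigma$ on a basis element of $\BBB$ propagates through $\partial\sigma$ into many elements of $K_3^3(\Hb)$, and the danger is that correcting the value on one element of $\DDDwl$ breaks vanishing on some element of $\SSS$ or $\DDDwr$. The rigid structure of $H^2$ (a single nonzero cup product, up to sign) forces essentially one viable family of choices, captured by the symmetric formulas above where $\sigma(\nu_i)$ picks out the pair $\{\chi_{2i-1},\chi_{2i}\}$ and $\sigma(\omega_i)$ picks out the ``mixed'' pair $\{\chi_2,\chi_{2i-1}\}$; these make the cancellations work out uniformly.
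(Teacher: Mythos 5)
Your proposal is correct, and its overall strategy is the same as the paper's: both proofs exhibit an explicit $\F_p$-linear cochain $\sigma\colon R\to H^1$ with $\partial\sigma=\kappa_3$, relying on Lemma \ref{lemma:kappa_3_vanishes_on_many_subsets} (vanishing of $\kappa_3$ on $\SSS\cup\DDD\cup\DDDwr\cup\TTT$) and on the relations of Lemma \ref{lemma:kappa_3_does_not_vanish} to pin down all values of $\kappa_3$ on the basis of Lemma \ref{lemma:Demushkin_K_3} up to one scalar $c'$. Where you differ is in the witness itself: the paper solves the full linear system for a $\sigma$ supported on all of $\BBB$ (its tabulated solution has many nonzero values on single-term elements, e.g.\ $\sigma(\chi_1\ot\chi_1)=-\chi_1$, $\sigma(\chi_1\ot\chi_3)=\chi_1$), whereas your $\sigma$ is supported only on the commutator-type relations $\nu_i$ and $\omega_i$. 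I checked your cochain against every family: on $\SSS$ both adjacent pairs are single-term elements of $\BBB$, so $\partial\sigma=0$; on $\DDD$ and $\DDDwr$ the cup products $\chi_k\cup\chi_{2i-1}$, $\chi_k\cup\chi_{2i}$, $\chi_k\cup\chi_2$, $\chi_k\cup\chi_1$ vanish for the admissible $k$; on $\TTT$ the two contributions $c'\chi_{2i-1}\cup\chi_{2i}$ and $c'\chi_{2i}\cup\chi_{2i-1}$ cancel; and on $\DDDwl$ one gets exactly $\pm c'(\chi_1\cup\chi_2)$ with the signs forced by Lemma \ref{lemma:kappa_3_does_not_vanish} (note that only the stated relations, not the nonvanishing, are actually used, and that relation \eqref{eq:kappa3_relation_i_and_j} is what lets a single $c'$ serve all $i$). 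So the computations go through, and your choice buys a visibly simpler and more symmetric verification that is uniform in $d$, avoiding the paper's coefficient table and its separate remark for $d\ge 6$. Two small presentational points: you should cite Lemma \ref{lemma:kappa_3_vanishes_on_many_subsets} explicitly, since the identity $\partial\sigma=\kappa_3$ on $\SSS\cup\DDD\cup\DDDwr\cup\TTT$ requires knowing that $\kappa_3$ vanishes there, and in a final write-up the ``bilinear cancellations'' you assert for $\DDD$, $\DDDwr$ and $\TTT$ should be displayed, since they are exactly where the choice of $\sigma(\nu_i)$ and $\sigma(\omega_i)$ could have gone wrong.
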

\begin{proof}
We need to show that we can find an $\F_p$-linear map $\pam \colon R \to H^1$ such that $\partial \pam = \kappa_3$. 
%
By Lemma \ref{lemma:kappa_3_does_not_vanish} 
and since $\kappa_3$ is $\F_p$-linear, 
we may assume that 
$\kappa_3(\chi_{1,1,2} + \chi_{1,2i,2i-1}) = \chi_1 \cup \chi_2$ in $H^2$ for all $2 \le i \le d/2$. 
To keep the notation simple, we first consider $i=2$ and $d=4$, and will then explain how to get the remaining coefficients when $d \ge 6$. 
Let $c_j^{k,n} \in \F_p$ denote the coefficients of $\chi_j$ such that 
\begin{align*}
\pam(\chi_k \ot \chi_n) =\sum_{j=1}^{4} c_j^{k,n} \chi_j
\end{align*} 
in $H^1$. 
Let $c_j^{1,2+4,3} \in \F_p$ denote the coefficients of $\chi_j$ such that 
\begin{align*}
\pam(\chi_1 \ot \chi_2 + \chi_{4} \ot \chi_{3}) = \sum_{j=1}^{4} c_j^{1,2+4,3} \chi_j
\end{align*} 
in $H^1$, 
and we use similar notation for $c_j^{2,1+3,4}$, $c_j^{1,2+2,1}$ and $c_j^{3,4+4,3}$. 
The value of $\partial \pam$ on, for example, $\chi_1 \ot \chi_1 \ot \chi_2 + \chi_1 \ot \chi_{4} \ot \chi_{3} \in \DDDw$ is then given by 
\begin{align*}
\partial \pam (\chi_1 \ot \chi_1 \ot \chi_2 + \chi_1 \ot \chi_{4} \ot \chi_{3}) & = -c_2^{1,2+4,3} (\chi_1 \cup \chi_2) - c_1^{1,1} (\chi_1 \cup \chi_2) - c_4^{1,4}(\chi_4 \cup \chi_3) \\
& = (-c_2^{1,2+4,3} - c_1^{1,1} + c_4^{1,4})(\chi_1 \cup \chi_2), 
\end{align*}
where we use the relations in $H^2$. 
By computing the effect of $\partial \pam$ on all basis elements of $K^3_3(\Hb)$, we then get that 
$\partial \pam = \kappa_3$ is satisfied if and only if the coefficients of $\pam$ satisfy the following system of linear equations: 
\begin{center}
\begin{tabular}{ r r r }
$- c_1^{1,2+4,3} - c_2^{2,2} + c_{3}^{3,2} = 0$ & $c_1^{2,1+3,4} + c_2^{2,2} - c_{3}^{2,3} = 1$  & $- c_{3}^{1,2+2,1} + c_2^{2,4} - c_1^{1,4} = 0$ \\
$- c_2^{1,2+4,3} - c_1^{1,1} + c_{4}^{1,4} = 1$ & $c_2^{2,1+3,4} + c_1^{1,1} - c_{4}^{4,1} = 0$  & $- c_{4}^{1,2+2,1} + c_2^{2,3} - c_1^{1,3} = 0$ \\
$- c_{3}^{1,2+4,3} + c_1^{4,1} - c_{4}^{4,4} = 1$ & $-c_{3}^{2,1+3,4} + c_1^{1,4} - c_{4}^{4,4} = 0$  & $- c_1^{34+43} + c_{4}^{4,2} - c_{3}^{3,2} = 0$ \\
$c_{3}^{1,2+4,3} - c_2^{2,3} + c_{3}^{3,3} = 0$ & $c_{4}^{2,1+3,4} - c_2^{3,2} + c_{3}^{3,3} = 1$  & $- c_2^{34+43} + c_{4}^{4,1} - c_{3}^{3,1} = 0$ \\
%
%
$c_1^{1,3}+ c_2^{3,2} = 0$ & $c_{3}^{3,1} + c_{4}^{1,4} = 0$ & \\ 
$c_1^{1,3}+ c_2^{2,3} = 0$ & $c_{3}^{1,3} + c_{4}^{4,1} = 0$  & \\ 
$c_1^{1,4} + c_2^{4,2} = 0$ & $c_{3}^{2,3} + c_{4}^{4,2} = 0$  & \\ 
$c_1^{4,1} + c_2^{2,4} = 0$ & $c_{3}^{3,2} + c_{4}^{2,4} = 0$  & \\ 
\end{tabular}
\end{center}
and the equation $c_j^{1,2+4,3} + c_j^{2,1+3,4} - (c_j^{1,2+2,1} + c_j^{3,4+4,3}) = 0$ for each $j=1,2,3,4$. 
A solution of the above linear system is given by 
\begin{center}
\begin{tabular}{ r r r r }
$c_1^{1,2+4,3} = 1$ &  $c_1^{2,1+3,4} = 1$ & $c_1^{1,2+2,1} = 1$ & $c_1^{3,4+4,3}  = 1$ \\
$c_2^{1,2+4,3} = 1$ & $c_2^{2,1+3,4} = 1$ & $c_2^{1,2+2,1} = 1$ & $c_2^{3,4+4,3}  = 1$ \\
$c_{3}^{1,2+4,3} = -1$ & $c_{3}^{2,1+3,4} = -1$ & $c_{3}^{1,2+2,1} = -1$ & $c_{3}^{3,4+4,3}  = -1$ \\
$c_{4}^{1,2+4,3} = -1$ & $c_{4}^{2,1+3,4} = -1$ & $c_{4}^{1,2+2,1} = -1$ & $c_{4}^{3,4+4,3} = -1$  \\
$c_1^{1,3} = 1$ & $c_2^{2,3} = 0$ & $c_{3}^{1,3} = 0$ & $c_{4}^{1,4} = 1$ \\
$c_1^{1,4} = 0$ & $c_2^{3,2} = -1$ & $c_{3}^{3,1} = -1$ & $c_{4}^{4,1} = 0$ \\ 
$c_1^{3,1} = 0$ & $c_2^{2,4} = -1$ & $c_{3}^{2,3} = -1$ & $c_{4}^{2,4} = 0$ \\
$c_1^{4,1} = 1$ & $c_2^{4,2} = 0$ & $c_{3}^{3,2} = 0$ & $c_{4}^{4,2} = 1$ \\
$c_1^{1,1} = -1$ & $c_2^{2,2} = -1$ & $c_{3}^{3,3} = 1$ & $c_{4}^{4,4} = 1$ 
\end{tabular}
\end{center}
and we set the other coefficients to be zero. 
When $d\ge 6$, for each $i \ge 2$, we get a similar system of equations with coefficients for terms only involving indices $1,2,2i-1,2i$. 
The linear systems for two different values of $i$ are independent of each other except for coefficients which are independent of $i$, 
i.e., whose indices only involve $1$ and $2$. 
However, by relation \eqref{eq:kappa3_relation_i_and_j} together with the other identities of Lemma \ref{lemma:kappa_3_does_not_vanish}, 
after replacing $3$ by $2i-1$ and $4$ by $2i$ in each occurrence, 
the same values of the coefficients solve the corresponding systems of equations. 
In particular, the values of coefficients only involving indices $1$ and $2$ remain the same for each system corresponding to $i$, 
i..e., for each system, our solution satisfies $c_1^{1,1} = -1$, $c_2^{2,2} = -1$, $c_1^{1,2+2,1} = 1$, and $c_2^{1,2+2,1} = 1$. 
%
This provides the desired linear map $\pam \colon R \to H^1$ such that $\partial \pam = \kappa_3$. 
\end{proof} 

By Proposition \ref{prop:canonical_class_of_Demushkin_group_construction}  and Lemma \ref{lemma:kappa_3_is_hit}, 
this shows that the canonical class of $G$ in $\HH^{3,-1}(\Hb)$ vanishes. 
By Theorem \ref{thm:existence_of_obstruction_class} and Proposition \ref{prop:canonical_class_via_Koszul_complex}, this proves that 
$G$ is $A_3$-formal. 
This concludes the proof of Theorem \ref{thm:Demushkin_groups_are_formal}. \qed

%


\end{document}